\providecommand{\U}[1]{\protect\rule{.1in}{.1in}}
\numberwithin{equation}{section}
\newtheorem{theorem}{Theorem}[section]
\newtheorem{lemma}[theorem]{Lemma}
\newtheorem{corollary}[theorem]{Corollary}
\newtheorem{proposition}[theorem]{Proposition}
\newtheorem{remark}[theorem]{Remark}
\newtheorem{definition}[theorem]{Definition}
\def\<{\langle}
\def\>{\rangle}
\def\d{{\rm d}}
\def\E{\mathbb{E}}
\def\N{\mathbb{N}}
\def\P{\mathbb{P}}
\def\R{\mathbb{R}}
\def\Z{\mathbb{Z}}
\def\F{\mathcal{F}}
\def\eps{\varepsilon}
\begin{document}

\title{Well posedness and limit theorems for a class of\\ stochastic dyadic models}

\author{Dejun Luo\footnote{Email: luodj@amss.ac.cn. Key Laboratory of RCSDS, Academy of Mathematics and Systems Science, Chinese Academy of Sciences, Beijing 100190, China and School of Mathematical Sciences, University of Chinese Academy of Sciences, Beijing 100049, China.} \quad
Danli Wang\footnote{Email: wangdanli19@amss.ac.cn. School of Mathematical Sciences, University of Chinese Academy of Sciences, Beijing 100049, China and Academy of Mathematics and Systems Science, Chinese Academy of Sciences, Beijing 100190, China.}}

\maketitle

\vspace{-20pt}

\begin{abstract}
We consider stochastic inviscid dyadic models with energy-preserving noise. It is shown that the models admit weak solutions which are unique in law. Under a certain scaling limit of the noise, the stochastic models converge weakly to a deterministic viscous dyadic model, for which we provide explicit convergence rates in terms of the parameters of noise. A central limit theorem underlying such scaling limit is also established. In case that the stochastic dyadic model is viscous, we show the phenomenon of dissipation enhancement for suitably chosen noise.
\end{abstract}

\textbf{Keywords:} dyadic model, scaling limit, weak convergence, quantitative convergence rate, central limit theorem

\textbf{MSC (2020):} 60H15, 60H50, 35Q35

\tableofcontents

\section{Introduction}

In this paper we are concerned with stochastic (inviscid) dyadic models written in vector form
  \begin{equation}\label{stoch-dyadic-model}
  \d X= B(X)\,\d t + (\circ\,\d W(t)) X,
  \end{equation}
where $X=(X_n)_n \in \R^\infty$ is an infinite dimensional column vector, $B(X)= B(X,X)$ and $B:\R^\infty\times \R^\infty \to \R^\infty$ is a bilinear mapping defined as
  $$B(X,Y)_n = \lambda^{n-1} X_{n-1} Y_{n-1} - \lambda^n X_n Y_{n+1}, \quad n\geq 1,$$
with the convention $X_0 = 0$, and $\lambda>1$ is a fixed parameter. The noise $\{W(t) \}_{t\geq 0}$ is white in time, taking values in the space of infinite dimensional skew-symmetric matrices, and $\circ\,\d$ means the stochastic differential is understood in Stratonovich sense. For $X(0) \in \ell^2 $, the subspace of $\R^\infty$ consisting of square summable sequences, one easily deduces from the skew-symmetry of $W(t)$ and the structure of $B(X)$ that the $\ell^2$-norm is formally preserved by the dynamics of \eqref{stoch-dyadic-model}. In fact, we shall prove that the model \eqref{stoch-dyadic-model} admits weak solutions with bounded $\ell^2$-norm, which are unique in law. Motivated by recent works \cite{Galeati20, FGL21, FL21, FGL21b} on scaling limits of SPDEs with transport noise,  we intend to show in this work that, under a suitable scaling limit of the noises $\{W(t) \}_{t\geq 0}$, the stochastic model \eqref{stoch-dyadic-model} converges weakly to the deterministic viscous dyadic system
  $$\dot X = B(X) + \nu SX,$$
where $\nu>0$ comes from the intensity of noise and $S= -\mbox{diag}(\lambda^2, \lambda^4, \ldots)$ is a diagonal matrix.
We shall provide explicit convergence rate in terms of the parameters of noises, and also prove a central limit theorem underlying such scaling limit results; in the case of viscous stochastic dyadic models, we shall show the phenomenon of dissipation enhancement. Before stating the precise choice of noises in \eqref{stoch-dyadic-model}, let us briefly recall the literature concerning the dyadic model.

The deterministic dyadic model is a special version of shell-type models \cite{LPPPV98, Bif03}, which describe in a simplified way the energy cascade in turbulent fluids; nevertheless, they capture some essential features of Euler and Navier-Stokes equations. The dyadic model considered in this paper was introduced by Katz and Pavlovi\'c \cite{KP05}, see also \cite[Section 2]{Ches08} for a short derivation. The viscous dyadic model, reads in component form as
  \begin{equation}\label{determ-dyadic-model}
  \dot X_n= B(X)_n - \kappa\, \lambda^{2\alpha n} X_n, \quad n\geq 1,
  \end{equation}
was studied in detail by Cheskidov \cite{Ches08}; here, $\kappa \geq 0$ is the viscosity and $\alpha>0$ is the dissipation degree. In particular, the existence of Leray-Hopf solutions was proved for any $\alpha>0$ and global regularity for $\alpha\geq 1/2$; moreover, a finite time blow-up result in the case $\alpha<1/3$ was given in Section 5 therein. Based on some previous studies on dyadic models, Tao constructed in the influential work \cite{Tao15} an averaged version of the deterministic 3D Navier-Stokes equations, exhibiting the behavior of finite time blow-up. We refer to \cite[Introduction]{Ches08} and \cite[Chapter 3]{Flandoli10} for more information on the dyadic models, see also Section 3 of the recent survey paper \cite{BF20}, where one can find a more general tree model. An inviscid tree model was studied by Bianchi and Morandin \cite{BM17}, showing that the exponent of structure function is strictly increasing and concave.

Though the energy is formally preserved by the inviscid dyadic model, i.e. $\kappa=0$ in \eqref{determ-dyadic-model}, it was shown in \cite{BFM11} that the energy actually dissipates for positive solutions;  see \cite{BBFM13} for related results on a tree model and \cite{BFM11b} for the case of stochastic dyadic model perturbed by energy-preserving noise as in \eqref{stoch-dyadic-model-1} below. Moreover, the inviscid model enjoys uniqueness of solutions if we restrict to nonnegative solutions (cf. \cite{BFM10}), a property which does not hold for solutions with varying signs. The uniqueness of solutions can be restored by suitable noise. More precisely, the following stochastic dyadic model was studied in \cite{BFM10b}:
  \begin{equation}\label{stoch-dyadic-model-1}
  \d X_n= B(X)_n \,\d t + \lambda^{n-1} X_{n-1} \circ \d W_{n-1} - \lambda^n X_{n+1} \circ \d W_n , \quad n\geq 1,
  \end{equation}
where $\{W_n \}_{n}$ is a family of independent standard Brownian motions. Rewriting the equations in It\^o form and using Girsanov transform, the authors in \cite{BFM10b} obtained a system of stochastic linear equations for which they were able to prove existence and uniqueness of solutions; these results are then transferred to uniqueness in law for the nonlinear system \eqref{stoch-dyadic-model-1}. See \cite{Bian13} for a related uniqueness result for the stochastic tree model. Romito \cite{Rom14} studied a viscous dyadic model with additive noise, namely, there is an independent Brownian noise $W_n$ in each equation of the system \eqref{determ-dyadic-model}; he presented a detailed analysis of the uniqueness and blow-up of solutions, depending on the strengths of linear dissipation term and nonlinear term.

Before moving forward, we note that the stochastic dyadic model \eqref{stoch-dyadic-model-1} can be written in the form of \eqref{stoch-dyadic-model}. To this end, we first rewrite \eqref{stoch-dyadic-model-1} in vector form
  $$\d X= B(X)\,\d t + \lambda \begin{pmatrix} - X_2 \\ X_1 \\ 0 \\ 0 \\ \vdots \end{pmatrix} \circ \d W_1 + \lambda^2 \begin{pmatrix} 0\\ - X_3 \\ X_2 \\ 0 \\ \vdots \end{pmatrix} \circ \d W_2 + \cdots. $$
For $1\leq i<j$, we introduce the infinite dimensional skew-symmetric matrix $A_{i,j}$ whose entries are all zero except that the $(i,j)$ entry is $-1$ and the $(j,i)$ entry is $1$. Then we can rewrite \eqref{stoch-dyadic-model-1} as
  \begin{equation}\label{stoch-dyadic-1}
  \d X= B(X)\,\d t + \sum_i \lambda^i A_{i,i+1} X \circ \d W_i.
  \end{equation}
Therefore, the matrix-valued noise in this case is $W(t)= \sum_i \lambda^i A_{i,i+1} W_i(t),\, t\geq 0$. In It\^o form, the above equation becomes
  \begin{equation}\label{stoch-dyadic-Ito}
  \d X= B(X)\,\d t + \sum_i \lambda^i A_{i,i+1} X \, \d W_i + \frac12 \bigg(\sum_i \lambda^{2i} A_{i,i+1}^2 \bigg) X \, \d t.
  \end{equation}
Note that $A_{i,i+1}^2$ is a diagonal matrix whose $(i,i)$ and $(i+1, i+1)$ entries are $-1$ while all the other entries are zero; thus,
  \begin{equation}\label{Stra-Ito-corrector}
  \sum_i \lambda^{2i} A_{i,i+1}^2 = - \mbox{diag}\big(\lambda^2, \lambda^2 +\lambda^4, \lambda^4 +\lambda^6, \ldots\big) .
  \end{equation}

In several recent works, Flandoli, Galeati and the first named author of the current paper have shown that many PDEs perturbed by multiplicative transport noise converge weakly, under a suitable scaling of the noise to high Fourier modes, to the corresponding deterministic equations with an additional viscous term, see for instance \cite{Galeati20, FGL21, Luo21}; see also \cite{FL20} where the limit equation is driven by additive noise. Regarding such high mode transport noise as small components of turbulent fluids, one may interpret heuristically the above results as the emergence of eddy viscosity. We have made use of the enhanced dissipation to show suppression of blow-up by transport noise, cf. \cite{FL21, FGL21b, Luo22}. Furthermore, the weak convergence results were improved in  \cite{FGL21c} by providing quantitative convergence rates, and the large deviation principle and central limit theorems underlying such limit results have been established in the recent paper \cite{GL22}. There are also studies on stochastic heat equations with transport noise in a bounded domain \cite{FGL21d} and in an infinite channel \cite{FlaLuongo22}. Our purpose in this work is to extend some of these results to the dyadic model, but first let us point out that a simple way of rescaling the noise in \eqref{stoch-dyadic-1} does not work, as discussed in the next remark.

\begin{remark}\label{rem-intro-1}
Following the ideas in \cite{Galeati20, FGL21}, it seems natural to take a sequence of coefficients $\theta^N= (\theta^N_i)_i \in \ell^2$ such that (for instance $\theta^N_i= N^{-1/2} {\bf 1}_{\{1\leq i\leq N\}}$)
  \begin{equation}\label{rem-intro-1.1}
  \| \theta^N \|_{\ell^2} =1 \ (\forall\, N\geq 1), \quad \| \theta^N \|_{\ell^\infty} \to 0 \mbox{ as } N\to \infty,
  \end{equation}
and to consider equations
  $$\d X^N= B(X^N)\,\d t + \sum_i \theta^N_i \lambda^i A_{i,i+1} X^N \circ \d W_i. $$
The associated It\^o equations are
  $$\d X^N= B(X^N)\,\d t + \sum_i \theta^N_i \lambda^i A_{i,i+1} X^N \, \d W_i + \frac12 \bigg(\sum_i (\theta^N_i )^2 \lambda^{2i} A_{i,i+1}^2 \bigg) X^N \, \d t. $$
From the expression \eqref{Stra-Ito-corrector}, it is easy to see that the matrix
  $$\sum_i (\theta^N_i )^2 \lambda^{2i} A_{i,i+1}^2 = - {\rm diag}\Big((\theta^N_1 )^2\lambda^2, (\theta^N_1 )^2\lambda^2 +(\theta^N_2 )^2\lambda^4, (\theta^N_2 )^2\lambda^4 +(\theta^N_3 )^2\lambda^6, \ldots\Big)  $$
vanishes as $N\to \infty$. The reason is that, in each diagonal entry, there are at most two components of $\theta^N$ which tend to 0 as  $N\to \infty$. As a result, we cannot get an extra viscous term in the limit.
\end{remark}

In view of the above remark, given $\theta\in \ell^2$ with $\|\theta \|_{\ell^2}=1$, we consider the following stochastic dyadic model:
  \begin{equation}\label{stoch-dyadic-2}
  \d X= B(X)\,\d t + \sqrt{2\nu} \sum_i \lambda^i \sum_{j=1}^\infty \theta_j A_{i,i+j} X \circ \d W_{i,j},
  \end{equation}
where $\nu>0$ represents the intensity of noise and $\{W_{i,j} \}_{i,j \geq 1}$ are independent real Brownian motions. In other words, the matrix-valued noise in \eqref{stoch-dyadic-model} takes the form
  $$W(t)= \sqrt{2\nu} \sum_i \lambda^i\sum_{j=1}^\infty \theta_j A_{i,i+j} W_{i,j}(t). $$

\begin{remark}
Heuristically, we require the noise to transfer energy among components far from each other, not only between neighbouring ones. We admit that such noise is not consistent with the definition of dyadic model; from the view point of turbulence theory, however, the noise seems reasonable since there does exist long-range interactions among multiple fluid scales and energy is transferred between them.
\end{remark}

We give some notations frequently used below.  For $s\in \R$, let $H^s$ be the subspace of $\R^\infty$ consisting of those $x=(x_n)_n$ such that
  $$\|x\|_{H^s} = \bigg(\sum_n \lambda^{2ns} x_n^2 \bigg)^{1/2} <\infty. $$
We shall write $H=H^0$ which coincides with $\ell^2$, and we often use $\|\cdot\|_{\ell^2}$ for the norm in $H$; the notation $\<\cdot, \cdot\>$ stands for the scalar product in $H$ or the duality between $H^s$ and $H^{-s}$, $s\in \R$. Given $T\ge 0$, we denote $C_t H^s$ the space $C([0,T], H^s)$ endowed with the norm
  $$\|X \|_{C_t H^s}:= \sup_{t\in[0,T]} \|X(t)\|_{H^s} < \infty.  $$
The notation $a\lesssim b$ means $a\leq Cb$ for some unimportant constant $C$, and $\lesssim_\lambda$ means that $C$ is dependent on $\lambda$. Sometimes we write $\Z_+^2$ for the set of integer indices $k=(i,j)$ with $i,j\ge 1$.

\subsection{Well posedness of stochastic dyadic model} \label{subsec-well-posedness}

We first rewrite \eqref{stoch-dyadic-2} in It\^o form:
  \begin{equation}\label{stoch-dyadic-2-Ito}
  \d X= B(X)\,\d t + \sqrt{2\nu} \sum_i \lambda^i \sum_{j=1}^\infty \theta_j A_{i,i+j} X \, \d W_{i,j} + \nu \sum_i  \lambda^{2i} \sum_{j=1}^\infty \theta_j^2 A_{i,i+j}^2 X \, \d t.
  \end{equation}
The Stratonovich-It\^o corrector looks quite complicated, but the matrix term is in fact diagonal. In the sequel we write
  $$ S_\theta = \sum_i \lambda^{2i} \sum_{j=1}^\infty \theta_j^2 A_{i,i+j}^2, $$
then it is not difficult to show that
  $$S_\theta = - {\rm diag}\bigg(\lambda^2, \lambda^4 + \theta_1^2 \lambda^2, \lambda^6 + \theta_2^2\lambda^2 + \theta_1^2\lambda^4, \cdots, \lambda^{2i} + \sum_{j=1}^{i-1} \theta_j^2 \lambda^{2(i-j)}, \ldots \bigg). $$
Indeed, since  $A_{i,i+j}^2$ is a matrix whose entries are zero except that the ones at $(i,i)$ and $(i+j, i+j)$ are $-1$, one has
  $$\sum_{j=1}^\infty \theta_j^2 A_{i,i+j}^2 = {\rm diag}\big(\underbrace{0,\ldots, 0}_{i-1}, -1, -\theta_1^2, -\theta_2^2, \ldots \big), $$
where we have used $\|\theta \|_{\ell^2}=1$. This leads to the above identity by elementary computations.

Using the notation $S_\theta$, \eqref{stoch-dyadic-2-Ito} reduces to
  \begin{equation}\label{stoch-dya-model-N}
  \d X= B(X)\,\d t + \sqrt{2\nu} \sum_i \lambda^i \sum_{j=1}^\infty \theta_j A_{i,i+j} X \, \d W_{i,j} + \nu S_\theta X \, \d t.
  \end{equation}
The component form of equation \eqref{stoch-dya-model-N} is
  \begin{equation}\label{com-stoch-dya-model-N}
  \begin{aligned}
  \d X_n &= \big(\lambda^{n-1}X_{n-1}^2-\lambda^n X_n X_{n+1} \big)\,\d t -  \nu \bigg(\lambda^{2n} + \sum_{j=1}^{n-1} \theta_j^2 \lambda^{2(n-j)} \bigg) X_n \, \d t\\
  &\quad+ \sqrt{2\nu} \sum\limits_{j=1}^{n-1} \lambda^j \theta_{n-j} X_j \,\d W_{j,n-j} -\sqrt{2\nu} \lambda^n \sum\limits_{j=1}^{\infty} \theta_j X_{n+j} \, \d W_{n,j}.
  \end{aligned}
  \end{equation}

The following definition of weak solutions to \eqref{stoch-dya-model-N} is the same as \cite[Definition 3.2]{Flandoli10}.

\begin{definition}\label{defn-martingale-solution}
Given $x=\{x_n\}_{n\ge 1} \in \ell^2$, we say that \eqref{stoch-dya-model-N} has a weak solution in $\ell^2$ if there exist a filtered probability space $(\Omega, \mathcal{F}, \mathcal{F}_t, \P)$, a sequence of independent Brownian motions $\{W_{k}\}_{k\in \mathbb{Z}_+^2}$ on $(\Omega, \mathcal{F}, \mathcal{F}_t, \P)$ and an $\ell^2$-valued stochastic process $(X_n)_{n\ge 1}$ on $(\Omega, \mathcal{F}, \mathcal{F}_t, \P)$ with continuous adapted components $X_n$, such that
  \begin{equation*}
  \begin{aligned}
  X_n(t)&= x_n +\! \int_0^t\! \big(\lambda^{n-1}X_{n-1}^2 -\lambda^n X_n  X_{n+1} \big)(s)\,\d s - \nu\! \int_0^t\! \bigg(\lambda^{2n} + \sum_{j=1}^{n-1} \theta_j^2 \lambda^{2(n-j)} \bigg) X_n(s) \,\d s\\
  &\quad + \sqrt{2\nu} \int_0^t \sum\limits_{j=1}^{n-1} \lambda^j X_j(s) \theta_{n-j} \,\d W_{j,n-j}(s) -\sqrt{2\nu} \lambda^n\! \int_0^t  \sum\limits_{j=1}^{\infty} \theta_j X_{n+j}(s) \,\d W_{n,j}(s)
  \end{aligned}
  \end{equation*}
for each $n\ge 1$, with $X_0(\cdot)\equiv 0$. We denote this solution simply by $X$. By $L^\infty$-weak solution we mean that there exists a constant $C>0$ such that $\| X(t)\|_{\ell^2} \le C$ for a.e. $(\omega, t)\in \Omega \times [0,T]$.
\end{definition}

Here is the main result of this part.

\begin{theorem}\label{thm-existence}
Assume that the first component of $\theta$ is nonzero, i.e. $\theta_1\neq 0$. Given $x=\{x_n\}_{n\ge 1} \in \ell^2$, there exists an $L^\infty$-weak solution $X=(X(t))_{t\in [0,T]}$ to \eqref{stoch-dya-model-N} such that $\|X(t) \|_{\ell^2} \le \|x \|_{\ell^2}$; moreover, such solutions are unique in law.
\end{theorem}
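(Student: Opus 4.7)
The plan is to combine a Galerkin--compactness argument for existence with a Girsanov-type transformation for uniqueness in law, exploiting in an essential way the hypothesis $\theta_1 \neq 0$.

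For existence, I follow the classical scheme of Flandoli--Gatarek \cite{FG95}. Project \eqref{stoch-dya-model-N} onto the first $N$ modes, arranging the truncation so that the drift still satisfies the telescoping cancellation $\<B(X^N), X^N\> = 0$ and each truncated noise matrix remains skew-symmetric. The Stratonovich form of It\^o's formula then gives $\frac{\d}{\d t}\|X^N(t)\|_{\ell^2}^2 = 0$, so each Galerkin system exists globally with $\|X^N(t)\|_{\ell^2} \leq \|x\|_{\ell^2}$. Combining this uniform $L^\infty_t \ell^2$ bound with a Kolmogorov-type estimate for time-regularity in some negative Sobolev space $H^{-s}$, $s>0$, yields tightness of the laws of $\{X^N\}$ in $C([0,T], H^{-s})$. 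Prokhorov and Skorokhod produce a subsequential limit on a new probability space, and I pass to the limit in the equation in the usual way: componentwise, $B(X)_n$ depends only on $X_{n-1}, X_n, X_{n+1}$, for which strong convergence in $H^{-s}$ combined with the uniform $\ell^2$-bound is sufficient, while the stochastic integrals are identified by the standard martingale convergence argument. The limit is an $L^\infty$-weak solution satisfying $\|X(t)\|_{\ell^2} \le \|x\|_{\ell^2}$.

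For uniqueness in law, the key observation is that in \eqref{com-stoch-dya-model-N} the Brownian motions $W_{i,1}$ enter the equation for $X_n$ only through $\sqrt{2\nu}\,\lambda^{n-1}\theta_1 X_{n-1}\,\d W_{n-1,1}$ and $-\sqrt{2\nu}\,\lambda^n \theta_1 X_{n+1}\,\d W_{n,1}$, which, up to the factor $\sqrt{2\nu}\,\theta_1$, is exactly the noise structure of the model studied in \cite{BFM10b}. Since $\theta_1 \neq 0$, I would perform the Girsanov shift
  $$\d\tilde W_{n,1}(t) = \d W_{n,1}(t) + \frac{X_n(t)}{\sqrt{2\nu}\,\theta_1}\,\d t,\qquad n\geq 1,$$
leaving all $W_{i,j}$ with $j\geq 2$ unchanged. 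A direct computation shows that the additional drifts produced in the equation for $X_n$ by these shifts cancel exactly the two quadratic terms of $B(X)_n$ in every component. The a priori bound $\|X(t)\|_{\ell^2} \le \|x\|_{\ell^2}$ makes $\sum_n X_n^2/(2\nu\theta_1^2)$ deterministically bounded, so Novikov's condition is trivially satisfied on $[0,T]$ and the Girsanov density is a true martingale. Under the new measure $\tilde{\mathbb{P}}$, the process $X$ satisfies the \emph{linear} equation
  $$\d X = \nu S_\theta X\,\d t + \sqrt{2\nu}\sum_i \lambda^i \sum_{j=1}^\infty \theta_j A_{i,i+j} X\,\d\tilde W_{i,j}.$$

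Pathwise uniqueness for this linear equation follows from the energy identity $\E\|X(t)\|_{\ell^2}^2 = \|x\|_{\ell^2}^2$, which holds thanks to the skew-symmetry of the $A_{i,i+j}$ and the exact matching between $S_\theta$ and the It\^o correction: applied to the difference of two solutions driven by the same Brownian motions, it forces the difference to vanish identically. A Yamada--Watanabe type argument then upgrades this to uniqueness in law under $\tilde{\mathbb{P}}$, and because the Radon--Nikodym density $\d\tilde{\mathbb{P}}/\d\mathbb{P}$ is a measurable functional of the trajectory of $X$, uniqueness in law transfers back to $\mathbb{P}$. The main obstacle I anticipate is executing this programme rigorously in the infinite-dimensional setting, since each $A_{i,i+j}$ carries the unbounded factor $\lambda^i$ and the noise coefficients are therefore not Lipschitz on $\ell^2$; the cleanest way around this is to perform the Girsanov transform first at the Galerkin level, where everything is finite-dimensional and classical SDE theory applies, and then pass to the limit $N\to\infty$ using the tightness and identification arguments from the first half of the proof.
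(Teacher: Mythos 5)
Your Girsanov reduction is exactly the one the paper uses (the shift $\d\widehat W_{i,1}=\d W_{i,1}+X_i/(\sqrt{2\nu}\,\theta_1)\,\d t$ cancels $B(X)_n$ componentwise, and Novikov holds because the solution is of class $L^\infty$), and replacing the paper's route to existence (Galerkin for the \emph{linear} system, then Girsanov back) by a direct Galerkin--compactness argument on the nonlinear system is a legitimate, if less economical, alternative. The genuine gap is in your treatment of uniqueness for the linear equation \eqref{auxiliary-linear-model}. You assert that pathwise uniqueness ``follows from the energy identity $\E\|X(t)\|_{\ell^2}^2=\|x\|_{\ell^2}^2$'', but this identity is only \emph{formal}: in It\^o form the drift of $X_n^2$ contains the term $-2\nu\big(\lambda^{2n}+\sum_{j<n}\theta_j^2\lambda^{2(n-j)}\big)X_n^2$ together with matching positive terms, and their mutual cancellation after summation over $n$ cannot be justified for a general $L^\infty$-solution, because the individual sums $\sum_n\lambda^{2n}\E[X_n^2]$ need not converge (an $\ell^2$-valued solution has no $H^1$ regularity). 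Applying It\^o's formula to the truncated energy $\sum_{n\le N}X_n^2$ leaves an uncontrolled flux term at level $N$. This is not a technicality: for closely related dyadic systems the formal conservation of energy genuinely fails (anomalous dissipation, cf.\ \cite{BFM11, BFM11b}), and the dual phenomenon --- a nonzero solution of the linear system with zero initial data, gaining energy ``from infinity'' --- is precisely what a uniqueness proof must exclude. The paper explicitly notes that the naive energy argument of \cite[Theorem 3.6]{Flandoli10} is unavailable here; instead it derives the closed system of ODEs $\frac{\d}{\d t}\E^Q[X_n^2]=\sum_j\E^Q[X_j^2]\,m_{j,n}$ for a symmetric conservative matrix $M$ and shows that the only nonnegative $\ell^1$-bounded solution with zero data is zero, via a Laplace-transform/maximality argument borrowed from \cite{Bian13}. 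Your proposal contains no substitute for this step.

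Your suggested fallback --- performing the Girsanov transform at the Galerkin level and passing to the limit --- does not repair this: uniqueness in law must be established for \emph{every} $L^\infty$-weak solution, not only for those arising as Galerkin limits, so the problem of analysing an arbitrary $L^\infty$-solution of the linear system cannot be bypassed. One further small point: to transfer uniqueness in law from $Q$ back to $\P$ one needs the joint law of the Girsanov density and the trajectory; the paper handles this by adjoining the martingale $R_t=\frac{1}{\sqrt{2\nu}\,\theta_1}\sum_i\int_0^t X_i\,\d\widehat W_{i,1}$ to the system and applying Yamada--Watanabe to the enlarged system, a step your ``measurable functional of the trajectory'' remark glosses over.
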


Following the arguments of \cite[Section 3.4]{Flandoli10}, we shall prove in Section \ref{section-existence-Scaling-limit} the above theorem by applying the Girsanov transform: the nonlinear model \eqref{stoch-dya-model-N} is transformed into a stochastic linear system, for which one can prove existence and uniqueness of strong solutions; in particular, we borrow the idea of \cite{Bian13} to deal with the uniqueness part. These results are then transferred back to \eqref{stoch-dya-model-N}, yielding existence and uniqueness in law of weak solutions.

\subsection{Convergence rates to deterministic viscous dyadic model}\label{subsec-converg-rate}

From the expression of $S_\theta$, we easily deduce that if $\|\theta \|_{\ell^\infty}\to 0$ while keeping $\|\theta \|_{\ell^2}=1$ (as in \eqref{rem-intro-1.1}, but we do not want to use $\theta^N$ for ease of notation), then
  $$S_\theta \to -{\rm diag}(\lambda^2, \lambda^4, \ldots ) =:S .$$
Therefore, if we can show that the martingale part of \eqref{stoch-dya-model-N} vanishes in the limit, then the solutions $X$ will be close to that of the deterministic viscous dyadic model:
  \begin{equation}\label{thm-scaling-limit.1}
  \frac{\d \tilde X}{\d t}= B(\tilde X) + \nu S \tilde X, \quad \tilde X(0)=x,
  \end{equation}
which reads in component form as
  $$\frac{\d \tilde X_n}{\d t}= \lambda^{n-1} \tilde X_{n-1}^2 - \lambda^n \tilde X_n \tilde X_{n+1} - \nu\lambda^{2n} \tilde X_n. $$
Given $x\in \ell^2$, a function $\tilde X = \{\tilde X_n \}_{n\geq 1} \in L^\infty(0,T; \ell^2)$ is called a weak solution to \eqref{thm-scaling-limit.1} if for every $n\ge 1$,  $\tilde X_n \in C^1([0,T])$ and the component equation holds pointwise for all $t\in [0,T]$. It is easy to show that \eqref{thm-scaling-limit.1} admits a unique solution $\tilde X$ satisfying $\|\tilde X(t)\|_{\ell^2}\le \|x\|_{\ell^2}$ for all $t\in [0,T]$. The proof of existence part is standard, cf. \cite[Theorem 4.1]{Ches08} which also shows $L^2(0,T; H^1)$-regularity of solutions; we shall provide in Section \ref{section-Uniqueness} a simple proof of uniqueness in the space $L^\infty(0,T; \ell^2)$.

Our purpose is to prove a quantitative estimate on the distance between $X$ and $\tilde X$, in a suitable topology. For simplicity, we assume that \eqref{stoch-dya-model-N} and \eqref{thm-scaling-limit.1} have the same initial data.

\begin{theorem}\label{thm-quantitative-convergence-rate}
Given initial data $x=\{x_n\}_{n\ge 1} \in \ell^2$, let $X$ be the $L^\infty$-weak solution to \eqref{stoch-dya-model-N} and $\tilde X$ be any weak solution to \eqref{thm-scaling-limit.1}. Then for any $ \delta \in (\frac12 , 1),\ \alpha \in (2-2\delta, 1 )$, one has
  $$\begin{aligned}
  \E\big[\|X - \tilde X \|_{C_T H^{-\alpha}}^2 \big] \lesssim \nu^{\frac{\alpha}{2}} \|\theta\|_{\ell^{\infty}}^2 \|x \|_{\ell^2}^2 \bigg\{ \alpha^{-1} \big[C(T) C_{1-\frac{\alpha}{2}} \big]^2 \frac{\lambda^{-\alpha}}{1-\lambda^{-\alpha}} + C(T,\delta) \nu^{2-2\delta-\frac{\alpha}{2}} \|\theta\|_{\ell^{\infty}}^2 \bigg\}.
  \end{aligned}$$
\end{theorem}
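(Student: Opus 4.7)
The plan is to analyze $Y(t) := X(t) - \tilde X(t)$ in the negative Sobolev norm $\|\cdot\|_{H^{-\alpha}}$ via It\^o's formula. Using the bilinearity of $B$ (so that $B(X)-B(\tilde X) = B(Y,X) + B(\tilde X, Y)$) together with the splitting $S_\theta X - S\tilde X = S Y + (S_\theta - S) X$, one sees that $Y$ satisfies, with $Y(0)=0$,
$$\d Y = \bigl[ B(Y, X) + B(\tilde X, Y) + \nu S Y + \nu (S_\theta - S) X \bigr] \d t + \sqrt{2\nu} \sum_{i,j \ge 1} \lambda^i \theta_j A_{i, i+j} X\, \d W_{i,j}.$$
Applying It\^o's formula to $\|Y(t)\|_{H^{-\alpha}}^2 = \sum_n \lambda^{-2n\alpha} Y_n(t)^2$ and using $\langle Y, S Y \rangle_{H^{-\alpha}} = -\|Y\|_{H^{1-\alpha}}^2$, one obtains
$$\|Y(t)\|_{H^{-\alpha}}^2 + 2\nu \int_0^t \|Y\|_{H^{1-\alpha}}^2 \,\d s = \mathcal{N}(t) + \mathcal{C}(t) + \mathcal{I}(t) + M(t),$$
where $\mathcal{N}(t) = 2\int_0^t \langle Y, B(Y,X) + B(\tilde X, Y)\rangle_{H^{-\alpha}}\,\d s$ is the nonlinear drift, $\mathcal{C}(t) = 2\nu \int_0^t \langle Y, (S_\theta - S) X\rangle_{H^{-\alpha}}\,\d s$ the correction drift, $\mathcal{I}(t) = 2\nu \int_0^t \sum_{i,j}\lambda^{2i}\theta_j^2 \|A_{i,i+j}X(s)\|_{H^{-\alpha}}^2\,\d s$ the It\^o correction, and $M(t)$ a local martingale.

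The strategy is to absorb each right-hand side contribution either into the dissipation $\nu \int \|Y\|_{H^{1-\alpha}}^2$ or into a Gronwall-compatible residual proportional to $\|Y\|_{H^{-\alpha}}^2$. For $\mathcal{N}$, the dyadic structure of $B$ combined with index shifts and Cauchy-Schwarz yields
$$|\langle Y, B(Y, X) \rangle_{H^{-\alpha}}| + |\langle Y, B(\tilde X, Y) \rangle_{H^{-\alpha}}| \lesssim_\lambda \bigl( \|X\|_{\ell^2} + \|\tilde X\|_{H^\delta} \bigr) \|Y\|_{H^{-\alpha}} \|Y\|_{H^{1-\alpha}},$$
and the range $\alpha \in (2-2\delta,1)$ with $\delta\in(\tfrac12,1)$ is precisely the condition (equivalent to $\delta+\alpha/2>1$) which makes this bilinear estimate valid. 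Young's inequality absorbs a portion of the dissipation; the time-integrability of $\|\tilde X\|_{H^\delta}$ follows from the deterministic viscous energy identity $\nu \|\tilde X\|_{L^2_T H^1}^2 \le \tfrac12 \|x\|_{\ell^2}^2$ together with the interpolation $\|\tilde X\|_{H^\delta} \le \|\tilde X\|_{\ell^2}^{1-\delta} \|\tilde X\|_{H^1}^\delta$.

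For $\mathcal{I}$, a direct computation from $\|A_{i,i+j}X\|_{H^{-\alpha}}^2 = \lambda^{-2i\alpha}X_{i+j}^2 + \lambda^{-2(i+j)\alpha}X_i^2$, an index shift $n = i+j$, and the domination $\theta_j^2 \le \|\theta\|_{\ell^\infty}^2$ (extracting the geometric sum in $\lambda^{-\alpha}$) give
$$\mathcal{I}(t) \lesssim \nu \|\theta\|_{\ell^\infty}^2 \, \frac{\lambda^{-\alpha}}{1-\lambda^{-\alpha}} \int_0^t \|X\|_{H^{1-\alpha/2}}^2\,\d s.$$
Interpolating $\|X\|_{H^{1-\alpha/2}}^2 \le \|X\|_{\ell^2}^\alpha \|X\|_{H^1}^{2-\alpha}$, using $\|X\|_{L^\infty_T\ell^2}\le\|x\|_{\ell^2}$, and bounding $\int_0^T \|X\|_{H^1}^{2-\alpha}\,\d s$ in H\"older form against the viscous estimate (via comparison with $\tilde X$) produces the leading term $\nu^{\alpha/2}\|\theta\|_{\ell^\infty}^2\|x\|_{\ell^2}^2 \cdot \alpha^{-1}(C(T)C_{1-\alpha/2})^2 \lambda^{-\alpha}/(1-\lambda^{-\alpha})$; the power $\nu^{\alpha/2}$ arises precisely from this interpolation combined with the $\nu^{-1}$ from $\nu\|\tilde X\|_{L^2_T H^1}^2\lesssim\|x\|_{\ell^2}^2$. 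The correction drift $\mathcal{C}$ uses $|(S_\theta - S)_{nn}| \lesssim \|\theta\|_{\ell^\infty}^2 \lambda^{2n}$ and, after Cauchy-Schwarz and the same interpolation scheme, produces the subleading contribution $\nu^{2-2\delta} \|\theta\|_{\ell^\infty}^4 \|x\|_{\ell^2}^2 \cdot C(T, \delta)$. Finally, the martingale $M$ is handled by Burkholder-Davis-Gundy: its quadratic variation factors as $[M]_T \lesssim \nu \|\theta\|_{\ell^\infty}^2\|x\|_{\ell^2}^2 \int_0^T \|Y\|_{H^{-\alpha}}\|Y\|_{H^{1-\alpha}}\,\d s$ after bounding the inner products pairwise, and Cauchy-Schwarz then allows absorbing half of $\E \sup_{t \le T}\|Y\|_{H^{-\alpha}}^2$ into the left-hand side.

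The main obstacle is the careful calibration of all norms and powers of $\nu$: the dissipation $\nu\|Y\|_{H^{1-\alpha}}^2$ is the only smoothing mechanism available, and each of the nonlinear, correction, It\^o, and martingale contributions must fit within it after Young's inequality without generating an unwanted negative power of $\nu$ that would spoil the $\nu^{\alpha/2}$ scaling. This balance is what forces the admissible parameter window $\alpha \in (2-2\delta, 1)$, $\delta \in (\tfrac12, 1)$, and dictates the geometric factor $\lambda^{-\alpha}/(1-\lambda^{-\alpha})$ appearing in the final bound.
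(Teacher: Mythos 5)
Your proposal takes a genuinely different route from the paper: you run an energy (It\^o-formula) argument for $\|Y\|_{H^{-\alpha}}^2$ with $Y=X-\tilde X$, whereas the paper works with the mild formulations, reduces everything via Gronwall to the stochastic convolution $Z_t=\sqrt{2\nu}\sum_{i,j}\lambda^i\theta_j\int_0^te^{\nu(t-r)S}A_{i,i+j}X(r)\,\d W_{i,j}(r)$ and the corrector term, and estimates $Z$ by BDG plus the Kolmogorov continuity criterion (Lemmas \ref{lem-estimate-expectation-1} and \ref{lem-estimate-expectation-2}); the factor $\nu^{\alpha/2}$ there comes from the time singularity $\int_0^t[\nu(t-r)]^{-(1-\beta)}\d r$. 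Your route could in principle work, but as written it has a genuine gap.

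The gap is in the treatment of the It\^o correction $\mathcal{I}$ (and of $[M]_T$). A direct computation gives $\mathcal{I}(t)\lesssim\nu\|\theta\|_{\ell^\infty}^2 C_{\lambda,\alpha}\int_0^t\|X\|_{H^{1-\alpha}}^2\,\d s$, i.e.\ a \emph{positive}-order Sobolev norm of the stochastic solution. But \eqref{stoch-dya-model-N} conserves the $\ell^2$-norm exactly (the Stratonovich--It\^o corrector $\nu S_\theta$ is exactly cancelled by the It\^o trace term), so $X$ carries no a priori bound in $L^2_tH^s$ for any $s>0$; the only available information is $\|X(t)\|_{\ell^2}\le\|x\|_{\ell^2}$. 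Your proposed remedy --- bounding $\int_0^T\|X\|_{H^1}^{2-\alpha}\,\d s$ ``via comparison with $\tilde X$'' --- is circular: the only comparison being established is in the negative space $H^{-\alpha}$, and it gives no control whatsoever on $\|X\|_{H^1}$. The viable repair is to split $X=Y+\tilde X$ inside $\mathcal{I}$, absorb the $\|Y\|_{H^{1-\alpha}}^2$ part into the dissipation $2\nu\int\|Y\|_{H^{1-\alpha}}^2$, and bound the $\tilde X$ part by the deterministic energy inequality; but the absorption forces a smallness condition $\|\theta\|_{\ell^\infty}^2C_{\lambda,\alpha}<1$ that is not in the hypotheses, and it uses $\nu\int_0^T\|\tilde X\|_{H^1}^2\,\d s\le\frac12\|x\|_{\ell^2}^2$, which holds for Leray--Hopf solutions but not for an arbitrary weak solution $\tilde X\in L^\infty(0,T;\ell^2)$ --- and the theorem is stated for any such $\tilde X$ precisely so that uniqueness can be deduced from it afterwards. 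Two further (fixable but real) slips: the claimed factorization $[M]_T\lesssim\nu\|\theta\|_{\ell^\infty}^2\|x\|_{\ell^2}^2\int_0^T\|Y\|_{H^{-\alpha}}\|Y\|_{H^{1-\alpha}}\,\d s$ requires $\|Y\|_{H^{1-2\alpha}}^2\le\|Y\|_{H^{-\alpha}}\|Y\|_{H^{1-\alpha}}$, which by interpolation fails for $\alpha<\tfrac12$ (allowed here when $\delta$ is close to $1$); and the powers of $\nu$ produced by your interpolation scheme (e.g.\ $\nu\cdot\nu^{-(1-\alpha)}=\nu^{\alpha}$ from the $\tilde X$ contribution) do not reproduce the stated $\nu^{\alpha/2}$ scaling, so even a repaired version would prove a quantitatively different bound rather than the one asserted.
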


The proof will be given in Section \ref{section-quantitative-convergence}, using the mild formulations of both the stochastic dyadic model \eqref{stoch-dya-model-N} and the deterministic model \eqref{thm-scaling-limit.1}. The two key ingredients in the proof are the estimates on the nonlinear term (see Lemma \ref{lem-nonlinearity-2}) and on a stochastic convolution; to prove the latter, we shall borrow some ideas from \cite[Lemma 2.5]{FGL21c}.

We also point out that the uniqueness of the weak solutions to \eqref{thm-scaling-limit.1} is an easy consequence of Theorem \ref{thm-quantitative-convergence-rate}. Indeed, let $\tilde X^1,\ \tilde X^2\in L^\infty(0,T; \ell^2)$ be two weak solutions to \eqref{thm-scaling-limit.1} and $X$ be an $L^\infty$-weak solution to \eqref{stoch-dya-model-N}, we have
  $$
  \|\tilde X^1- \tilde X^2\|_{C_T H^{-\alpha}}^2 \lesssim \E\big[\|X - \tilde X^1 \|_{C_T H^{-\alpha}}^2 \big]+ \E\big[\|X - \tilde X^2 \|_{C_T H^{-\alpha}}^2\big] \lesssim \|\theta\|_{\ell^{\infty}}^2.
  $$
Then, since the left hand side does not rely on $\theta$, we can take $\|\theta\|_{\ell^{\infty}} \to 0$ and get the uniqueness.

\subsection{CLT underlying the scaling limit}\label{subsec-CLT}

The result proved in Theorem \ref{thm-quantitative-convergence-rate} involves the convergence of stochastic processes $X$ to a deterministic one $\tilde X$, and thus it could be interpreted as a law of large numbers. We are interested in studying the Gaussian type fluctuations underlying such limit results. Motivated by \cite{GL22}, we shall take a special sequence of coefficients as below:
  $$\theta_j^{N}=\sqrt{\varepsilon_N}\frac{1}{j^{\alpha_1}},\quad 1\le j\le N,\ 0< \alpha_1 < \frac12,   $$
where $\varepsilon_N=\big(\sum_{j=1}^N \frac{1}{j^{2\alpha_1}} \big)^{-1} \to 0$ and $\|\theta^N\|_{\ell^2}=1$. Now \eqref{stoch-dya-model-N} becomes
  \begin{equation}\label{stoch-dya-model-N-new}
  \d X^N= B(X^N)\,\d t + \sqrt{2\nu \varepsilon_N} \sum_i \lambda^i\sum_{j=1}^N j^{-\alpha_1} A_{i,i+j} X^N \, \d W_{i,j} + \nu S_{\theta^N} X^N \, \d t.
  \end{equation}
By Theorem \ref{thm-quantitative-convergence-rate}, we know that $X^N$ converge weakly to $\tilde X$ as $N\to \infty$. Set
  $$\xi^N:=\frac{X^N-\tilde X}{\sqrt{\varepsilon_N}}, $$
then $\xi^N(0)=0$, and it satisfies
  \begin{equation}\label{central-limit-model-N}
  \begin{aligned}
  \d \xi^N &= \big[B(\xi^N,X^N)+ B(\tilde X, \xi^N)\big]\, \d t+ \frac{\nu}{\sqrt{\varepsilon_N}}(S_{\theta^N}-S)X^N\, \d t + \nu S \xi^N\, \d t\\
  &\quad + \sqrt{2\nu} \sum_i \lambda^i \sum_{j=1}^N j^{-\alpha_1} A_{i,i+j} X^N \, \d W_{i,j}.
  \end{aligned}
  \end{equation}
When $N\to \infty$, it is expected that the limit of $\xi^N$ solves the following equation
  \begin{equation}\label{central-limit-model-N-limit}
  \left\{ \begin{aligned}
  \d \xi &= \big[B(\xi,\tilde X)+ B(\tilde X, \xi)\big]\, \d t+ \nu S \xi\, \d t
  + \sqrt{2\nu} \sum_i \lambda^i \sum_{j=1}^\infty j^{-\alpha_1} A_{i,i+j} \tilde X \, \d W_{i,j},\\
  \xi(0) &=0.
  \end{aligned} \right.
  \end{equation}

Our purpose is to rigorously establish this convergence. Before stating the main result of this part, we need to clarify a technical issue. The original stochastic dyadic model \eqref{stoch-dya-model-N-new} admits only weak solutions, and thus the probability space $(\Omega, \F,\P)$ on which the solutions $X^N$ and the Brownian motions $\{W_{i,j}\}_{i,j\geq 1}$ are defined is not prescribed in advance. The fluctuations $\xi^N$ live also on the same probability space $(\Omega, \F,\P)$. Fortunately, for these Brownian motions $\{W_{i,j}\}_{i,j\geq 1}$ on  $(\Omega, \F,\P)$, one can show that the limit equation \eqref{central-limit-model-N-limit} admits a unique strong solution $\xi$, see Corollary \ref{cor-central-limit-wellposedness} below. As $\xi^N$ and $\xi$ are defined on the same probability space, we can estimate the moments of their distances in a suitable topology.

\begin{theorem}\label{thm-CLT}
Let $X^N$ be a weak solution to \eqref{stoch-dya-model-N-new} and $\tilde X$ the unique solution to \eqref{thm-scaling-limit.1}; define $\xi^N $ as above. Let $\xi$ be the unique solution to \eqref{central-limit-model-N-limit}. For any $\beta\in (0, 1 )$ and $T>0$, we have
  $$  \lim_{N\to \infty} \sup\limits_{t\in [0,T]} \E \|\xi^N(t)-\xi(t)\|_{H^{-\beta}}^2= 0.  $$
\end{theorem}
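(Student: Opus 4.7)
My plan is to introduce $\eta^N := \xi^N - \xi$, derive its SDE by subtracting \eqref{central-limit-model-N-limit} from \eqref{central-limit-model-N}, apply It\^o's formula to $\|\eta^N(t)\|_{H^{-\beta}}^2$, and exploit the strong dissipation $2\nu\langle \eta^N, S\eta^N\rangle_{H^{-\beta}} = -2\nu\|\eta^N\|_{H^{1-\beta}}^2$ (which is genuinely dissipative because $\beta<1$) to close a Gr\"onwall estimate after taking expectations.

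Using the identity $X^N - \tilde X = \sqrt{\varepsilon_N}\,\xi^N$, the SDE for $\eta^N$ contains, besides the dissipative $\nu S\eta^N$, three types of drift: the ``linearised'' bilinear drift $B(\eta^N,X^N) + B(\tilde X,\eta^N)$; a remainder $\sqrt{\varepsilon_N}\,B(\xi,\xi^N)$; and the Stratonovich--It\^o corrector residual $\frac{\nu}{\sqrt{\varepsilon_N}}(S_{\theta^N} - S)X^N$. Direct inspection of the diagonal entries of $S_{\theta^N} - S$ shows that they are $-\varepsilon_N$ times a bounded sum, so the corrector drift actually carries a hidden $\sqrt{\varepsilon_N}$ factor. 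The martingale part splits into a ``bulk'' piece $\sqrt{2\nu\varepsilon_N}\sum_i\lambda^i\sum_{j\le N}j^{-\alpha_1}A_{i,i+j}\xi^N\,\d W_{i,j}$ carrying an explicit $\sqrt{\varepsilon_N}$, and a ``tail'' piece $\sqrt{2\nu}\sum_i\lambda^i\sum_{j>N}j^{-\alpha_1}A_{i,i+j}\tilde X\,\d W_{i,j}$.

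After It\^o's formula and taking expectation, I would estimate the bilinear cross terms $\E\langle \eta^N, B(\eta^N,X^N) + B(\tilde X,\eta^N)\rangle_{H^{-\beta}}$ via the same nonlinearity estimate used for Theorem \ref{thm-quantitative-convergence-rate} (Lemma \ref{lem-nonlinearity-2}) together with Young's inequality, so that part of the resulting $H^{1-\beta}$ norm of $\eta^N$ is absorbed into the dissipation and the remainder yields a $C(\|X^N\|_{\ell^2},\|\tilde X\|_{\ell^2})\,\E\|\eta^N\|_{H^{-\beta}}^2$-type term amenable to Gr\"onwall. The two $\sqrt{\varepsilon_N}$ drifts are controlled using the a priori bound on $\xi^N$ in $L^2(\Omega; C_T H^{-\alpha})$ (for suitable $\alpha$) provided by Theorem \ref{thm-quantitative-convergence-rate}. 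The quadratic variation of the bulk martingale carries an overall $\varepsilon_N$ and can be handled similarly via this $H^{-\alpha}$ bound together with the convergent geometric series $\sum_j j^{-2\alpha_1}\lambda^{-2j\beta}$.

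The main obstacle will be the quadratic variation of the tail martingale, namely $2\nu\sum_i\lambda^{2i}\sum_{j>N}j^{-2\alpha_1}\|A_{i,i+j}\tilde X\|_{H^{-\beta}}^2$. A na\"ive bound using only $\|\tilde X\|_{\ell^2}$ fails because $\sum_{j>N}j^{-2\alpha_1}$ diverges when $\alpha_1<\tfrac12$. The decay must be extracted by writing $\|A_{i,i+j}\tilde X\|_{H^{-\beta}}^2 = \lambda^{-2i\beta}\tilde X_{i+j}^2 + \lambda^{-2(i+j)\beta}\tilde X_i^2$, swapping the order of summation in each piece, and combining the geometric weights $\lambda^{-2j\beta}$ and $\lambda^{2i(1-\beta)}$ (both strictly less than $1$ in their relevant ranges) with the $L^2(0,T;H^1)$ regularity of $\tilde X$ from \cite[Theorem 4.1]{Ches08}. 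Once this tail contribution is shown to vanish in $N$, integrating in time and applying Gr\"onwall's inequality yields $\sup_{t\in[0,T]}\E\|\eta^N(t)\|_{H^{-\beta}}^2 \to 0$, which is the claim.
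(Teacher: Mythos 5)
Your overall architecture (Gr\"onwall on $\eta^N=\xi^N-\xi$, smallness of the $\sqrt{\varepsilon_N}$ drifts, special treatment of the tail martingale) is the right shape, but the pointwise-in-time energy method in $H^{-\beta}$ does not close, for a reason that is visible already in your description of the corrector term. The diagonal entries of $S_{\theta^N}-S$ are $-\sum_{j=1}^{i-1}(\theta^N_j)^2\lambda^{2(i-j)}=-\varepsilon_N\lambda^{2i}\sum_{j}j^{-2\alpha_1}\lambda^{-2j}$, i.e. they are \emph{not} ``$-\varepsilon_N$ times a bounded sum'': they grow like $\varepsilon_N\lambda^{2i}$, so $S_{\theta^N}-S$ is a genuine second-order perturbation of size $\varepsilon_N$. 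Consequently $\frac{\nu}{\sqrt{\varepsilon_N}}(S_{\theta^N}-S)X^N$ is $O(\sqrt{\varepsilon_N})$ only in a norm two derivatives below that of $X^N$, and since $X^N$ is controlled only in $\ell^2$ (the Stratonovich noise conserves energy, so there is no parabolic gain for $X^N$), this drift is $O(\sqrt{\varepsilon_N})$ only in $H^{-2}$. In the It\^o/energy scheme the dissipation buys you at most one derivative: $|\langle \eta^N,g\rangle_{H^{-\beta}}|\le\|\eta^N\|_{H^{1-\beta}}\|g\|_{H^{-1-\beta}}$, and you would need $g$ in $H^{-\beta-s}$ with $s\ge 2-\beta>1$ because $\beta<1$. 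The same one-versus-two derivative mismatch breaks your bulk martingale estimate: the quadratic variation in $H^{-\beta}$ of $\sqrt{2\nu\varepsilon_N}\sum_{i}\lambda^i\sum_{j\le N}j^{-\alpha_1}A_{i,i+j}\xi^N\,\d W_{i,j}$ is, after resumming, of order $\varepsilon_N\|\xi^N\|_{H^{1-\beta}}^2$ (the factor $\lambda^{2i}$ forces the \emph{positive}-order norm $H^{1-\beta}$, not $H^{-\alpha}$), and Theorem \ref{thm-quantitative-convergence-rate} only gives you $\xi^N$ bounded in $H^{-\alpha}$ with $\alpha>0$; nor can this term be absorbed into the dissipation, which controls $\eta^N$ rather than $\xi^N$.

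This is exactly why the paper's proof works with the mild formulations of \eqref{central-limit-model-N} and \eqref{central-limit-model-N-limit} instead: the analytic semigroup $e^{\nu(t-r)S}$ supplies $2\delta_1$ derivatives of smoothing at the price of an integrable singularity $(t-r)^{-\delta_1}$ with $\delta_1\in(1-\beta/2,1)$ (Lemma \ref{similar-heat-semigroup-property}(i)), which is enough to place $(S_{\theta^N}-S)X^N$ in $H^{-\beta-2\delta_1}$ with $\beta+2\delta_1\ge 2$ and extract the full $\|\theta^N\|_{\ell^\infty}^2=\varepsilon_N$ (cf.\ Lemma \ref{lem-estimate-expectation-2} and the estimate of $I_4$), and the time-integrated factor $e^{-2\nu(t-r)\lambda^{2i}}$ kills the $\lambda^{2i}$ in the martingale quadratic variation so that only the $H^{-\alpha}$ and $\ell^2$ norms of $X^N-\tilde X$ are needed. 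Even then the divergent sum $\sum_j j^{-2\alpha_1}$ forces the additional $n_0$-splitting $\sum_j j^{-2\alpha_1}g_{i+j}^2\le\lambda^{2\alpha(i+n_0)}\|g\|_{H^{-\alpha}}^2+n_0^{-2\alpha_1}\|g\|_{\ell^2}^2$, with $N\to\infty$ first and $n_0\to\infty$ afterwards, which is a step your proposal does not anticipate (your geometric series $\sum_j j^{-2\alpha_1}\lambda^{-2j\beta}$ only appears in the piece where the shift $i+j$ sits in the $\lambda^{-2\beta(i+j)}$ weight). Your treatment of the tail martingale via resummation and the $L^2(0,T;H^1)$ regularity of $\tilde X$ is workable, but to repair the proof as a whole you should either switch to the mild formulation as in Section \ref{section-CLT}, or first establish (and you currently do not) a uniform bound on $\E\int_0^T\|\xi^N_r\|_{H^{1-\beta}}^2\,\d r$.
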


This result will be proved in Section \ref{section-CLT}. Unfortunately, we are unable to find an explicit convergence rate in the above limit.

\subsection{Dissipation enhancement}

Now we consider the stochastic viscous dyadic model with the same noise term as above:
  \begin{equation}\label{stoch-viscous-dyadic-model}
  \d X = B(X)\,\d t + \kappa S X \,\d t+ \sqrt{2\nu} \sum_i  \lambda^i\sum_{j=1}^\infty \theta_j A_{i,i+j} X \circ \d W_{i,j},
  \end{equation}
where $\kappa>0$ is the viscosity. For this equation, it is not difficult to show the existence and pathwise uniqueness of Leray-Hopf type solutions $\{X(t)\}_{t\geq 0}$, which is strong in the probabilistic sense and satisfies the energy equality: $\P$-a.s., for all $0\leq s<t$,
  \begin{equation}\label{viscous-dyadic-energy-balance}
  \|X(t)\|_{\ell^2}^2 + 2 \kappa \int_s^t \|X(r)\|_{H^1}^2 \,\d r= \|X(s)\|_{\ell^2}^2.
  \end{equation}
We omit the proofs since they are quite standard, see Proposition \ref{prop-uniqueness} below for sketched arguments in the deterministic setting; we only mention that, in the stochastic case, one should use \cite[Theorem 2.13]{RZ18} instead of Theorem 2.12 therein. Equality \eqref{viscous-dyadic-energy-balance} implies that $t \mapsto \|X(t)\|_{\ell^2}$ is almost surely decreasing; moreover, using the simple inequality $\|X(r)\|_{H^1} \ge \lambda \|X(r)\|_{\ell^2}$ one deduces the decay property
  $$\|X(t)\|_{\ell^2} \le e^{-\kappa\lambda^2 t}\|X(0)\|_{\ell^2},$$
where the rate of decay is independent of the noise.

The next result implies that the dissipation of \eqref{stoch-viscous-dyadic-model} is greatly enhanced for suitably chosen intensity $\nu$ and coefficients $\theta$.

\begin{theorem}\label{thm-enhance-dissipation}
For any $p\ge 1,\, \chi >0$ and $L>0$, there exists a pair $(\nu, \theta)$ satisfying the following property: for any $ X(0)\in \ell^2$ with $\|X(0) \|_{\ell^2} \leq L$, there exists a random constant $C>0$ with finite $p$-th moment, such that for the solution $X(t)$ of \eqref{stoch-viscous-dyadic-model} with initial condition $X(0)$, we have $\P$-a.s. for any $t\ge 0$,
  $$  \|X(t)\|_{\ell^2} \le C e^{-\chi t}\|X(0)\|_{\ell^2}.  $$
\end{theorem}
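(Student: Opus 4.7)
The plan is to establish a one-step multiplicative decay of $\|X(t)\|_{\ell^2}$ over a window of length $T$ that holds with high probability, and then iterate via the Markov property; on the complementary low-probability bad event, the energy identity \eqref{viscous-dyadic-energy-balance} guarantees at worst stasis, so no multiplicative regrowth can occur. The one-step decay will come from a scaling-limit comparison with the deterministic enhanced-viscosity dyadic model (an extension of Theorem \ref{thm-quantitative-convergence-rate} to $\kappa>0$), converted from the $H^{-\alpha}$ topology to the $\ell^2$ topology by a discrete interpolation that uses the pathwise $L^2_tH^1$ bound supplied by $\kappa$.

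\textbf{Single-step estimate.} First I would note that the proof of Theorem \ref{thm-quantitative-convergence-rate} carries over to \eqref{stoch-viscous-dyadic-model} after replacing $e^{\nu tS}$ by $e^{(\kappa+\nu)tS}$ in the mild formulation, yielding, for the solution $\tilde X$ of $\partial_t\tilde X=B(\tilde X)+(\kappa+\nu)S\tilde X$ with $\tilde X(0)=X(0)$,
$$\E\big[\|X-\tilde X\|_{C_TH^{-\alpha}}^2\big]\lesssim K(T,\nu,\alpha)\,\|\theta\|_{\ell^\infty}^2\,\|X(0)\|_{\ell^2}^2,$$
and, from the deterministic energy balance with the Poincar\'e-type bound $\|\tilde X\|_{H^1}\geq\lambda\|\tilde X\|_{\ell^2}$, the decay $\|\tilde X(t)\|_{\ell^2}\leq e^{-(\kappa+\nu)\lambda^2 t}\|X(0)\|_{\ell^2}$. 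The bridge to $\ell^2$ is the discrete H\"older interpolation on dyadic shells,
$$\|x\|_{\ell^2}^{1+\alpha}\leq\|x\|_{H^{-\alpha}}\,\|x\|_{H^1}^\alpha,$$
applied at a random instant $t^*\in[T/2,T]$ at which $\|X(t^*)\|_{H^1}^2\leq\|X(0)\|_{\ell^2}^2/(\kappa T)$; such $t^*$ exists pathwise because \eqref{viscous-dyadic-energy-balance} gives $\int_0^T\|X(r)\|_{H^1}^2\,dr\leq\|X(0)\|_{\ell^2}^2/(2\kappa)$. On the event $G_c:=\{\|X-\tilde X\|_{C_TH^{-\alpha}}\leq c\|X(0)\|_{\ell^2}\}$, the triangle inequality $\|X(t^*)\|_{H^{-\alpha}}\leq\|\tilde X(t^*)\|_{\ell^2}+\|X-\tilde X\|_{C_TH^{-\alpha}}$ together with monotonicity $\|X(T)\|_{\ell^2}\leq\|X(t^*)\|_{\ell^2}$ gives
$$\|X(T)\|_{\ell^2}\leq\eta\,\|X(0)\|_{\ell^2},\qquad \eta:=\bigg(\frac{e^{-(\kappa+\nu)\lambda^2 T/2}+c}{(\kappa T)^{\alpha/2}}\bigg)^{1/(1+\alpha)},$$
and Markov's inequality yields $\P(G_c^c)\leq p_0:=K\|\theta\|_{\ell^\infty}^2/c^2$, with both $\eta$ and $p_0$ independent of $X(0)$. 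Fixing $T$ large, then $\nu$ much larger, then $c$ small, then finally $\|\theta\|_{\ell^\infty}$ small (in this order), arranges $\eta<e^{-2\chi T}$ and $p_0$ as small as desired.

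\textbf{Iteration and moment bound.} Apply the one-step estimate iteratively at windows $[(n-1)T,nT]$ via the Markov property (available through uniqueness in law, Theorem \ref{thm-existence}, or directly on the Girsanov-linearised system). Letting $\xi_n$ be the indicator of failure on the $n$-th window and $S_n=\sum_{k\leq n}\xi_k$, one obtains $\|X(nT)\|_{\ell^2}\leq\eta^{n-S_n}\|X(0)\|_{\ell^2}$, and monotonicity inside each window yields
$$\sup_{t\in[(n-1)T,nT]}\|X(t)\|_{\ell^2}\,e^{\chi t}\leq e^{\chi T}\exp\!\big({-}(n-S_n)\log(1/\eta)+\chi nT\big)\|X(0)\|_{\ell^2}.$$
Since $\xi_n$ is conditionally Bernoulli with parameter $\leq p_0$, the exponential bound $\E\big[e^{p\log(1/\eta)\,S_n}\big]\leq e^{np_0(\eta^{-p}-1)}$ holds; choosing $p_0$ so small that $p_0(\eta^{-p}-1)<p(\log(1/\eta)-\chi T)$ makes $\sum_n\E\big[\exp\!\big(p[-(n-S_n)\log(1/\eta)+\chi nT]\big)\big]<\infty$, whence the random constant $C:=e^{\chi T}\sup_n\exp({-}(n-S_n)\log(1/\eta)+\chi nT)$ has finite $p$-th moment, yielding the claim.

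\textbf{Main obstacle.} The principal difficulty is the mismatch between the weak topology $H^{-\alpha}$ in which the scaling-limit comparison holds and the strong norm $\ell^2$ in which decay must be proved. Positivity of the initial viscosity $\kappa$ is indispensable here: it is the $L^2_tH^1$ bound from \eqref{viscous-dyadic-energy-balance} that, through the interpolation above, upgrades weak-norm closeness to strong-norm decay at an intermediate time $t^*$. A secondary delicate point is to keep $\eta$ and $p_0$ uniform in $X(0)\in\{y:\|y\|_{\ell^2}\leq L\}$, which is achieved by scaling $c$ proportionally to $\|X(0)\|_{\ell^2}$ in the definition of the good event.
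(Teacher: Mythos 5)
Your proposal is correct in outline, but it takes a genuinely different route from the paper. The paper does not pass through the scaling-limit comparison at all: its key step is Lemma \ref{lem-energy-decreasing}, a direct one-step contraction \emph{in expectation}, $\E\|X(n+1)\|_{\ell^2}^2\le\delta\,\E\|X(n)\|_{\ell^2}^2$ with $\delta$ made small by taking $\nu$ large and $\|\theta\|_{\ell^\infty}$ small, obtained by estimating each term of the mild formulation \eqref{stochastic-viscous-dyadic-mild} in $L^2(\Omega\times[n,n+1];\ell^2)$: semigroup decay for $e^{\mu(t-n)S}X(n)$, the smoothing estimates of Lemmas \ref{similar-heat-semigroup-property} and \ref{lem-heat-integral-property} for the nonlinearity and for the corrector $\nu(S_\theta-S)X$, and It\^o isometry combined with $\int_n^{n+1}\|X\|_{H^1}^2\,\d r\le\|X(n)\|_{\ell^2}^2/(2\kappa)$ for the stochastic convolution; the pathwise statement then follows by Chebyshev on each unit window, Borel--Cantelli, and a geometric-series bound for $\E\, e^{\chi p N}$. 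You instead derive a \emph{high-probability pathwise} contraction by comparing $X$ with the enhanced-viscosity deterministic flow in $H^{-\alpha}$ (a viscous analogue of Theorem \ref{thm-quantitative-convergence-rate}) and upgrading weak-norm closeness to $\ell^2$ decay via the interpolation $\|x\|_{\ell^2}^{1+\alpha}\le\|x\|_{H^{-\alpha}}\|x\|_{H^1}^{\alpha}$ at an intermediate time where the $H^1$ norm is under control, and then iterate with a binomial concentration estimate. Both arguments exploit $\kappa>0$ in the same essential way (the $L^2_t H^1$ bound from \eqref{viscous-dyadic-energy-balance}) and both need $\|X(0)\|_{\ell^2}\le L$ because of the quadratic nonlinearity. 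The paper's route is more self-contained: it needs no viscous extension of the convergence-rate theorem, no Markov-property conditioning, and yields an explicit $\delta$. Your route is conceptually more transparent --- the enhancement is exhibited as proximity to the enhanced-viscosity flow, in the spirit of the cited SPDE literature --- and gives a pathwise rather than $L^2(\Omega)$ one-step estimate. To make it airtight you should still (i) write out the verification that Theorem \ref{thm-quantitative-convergence-rate} carries over to \eqref{stoch-viscous-dyadic-model} with $e^{\nu tS}$ replaced by $e^{(\kappa+\nu)tS}$ (the structure is unchanged, only constants vary), (ii) handle the measurable selection of the intermediate time $t^*$, or avoid it by integrating the interpolation inequality over $t\in[T/2,T]$, and (iii) state the conditional form of the comparison estimate on each window, which requires the Markov property granted by pathwise uniqueness of the Leray--Hopf solution.
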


Contrary to usual results on dissipation enhancement for linear equations, here we have to confine the initial data $X(0)$ in a bounded ball in $\ell^2$, see \cite[Theorem 2.3]{Luo21b} for a similar result. The reason is due to the estimate of quadratic part $B(X)$, which produces a fourth order term, see the treatment of $I_2$ in the proof of Lemma \ref{lem-energy-decreasing}.

We finish the introduction by describing the structure of this paper. We prove in Section \ref{section-Preliminaries} some basic estimates on the nonlinearity and on the semigroup $\{e^{tS} \}_{t\ge 0}$, as well as the uniqueness of  solutions in $L^\infty(0,T; \ell^2)$ to viscous dyadic model \eqref{thm-scaling-limit.1}. Section \ref{section-existence-Scaling-limit} is devoted to the well posedness and weak limit result of the stochastic dyadic model \eqref{stoch-dya-model-N}. Theorem \ref{thm-quantitative-convergence-rate} will be proved in Section \ref{section-quantitative-convergence}, giving a quantitative estimate on the distance between the solutions of stochastic model \eqref{stoch-dya-model-N} and deterministic model \eqref{thm-scaling-limit.1}. Section \ref{section-CLT} is dedicated to the proof of the central limit theorem. Finally, we prove Theorem \ref{thm-enhance-dissipation} in the last Section \ref{section-dissipation-enhancement}, showing the phenomenon of dissipation enhancement for dyadic models by our choice of noise.

\section{Preliminaries}\label{section-Preliminaries}

This section contains two parts: in Section \ref{subs-basic-estimates} we collect some basic estimates on the nonlinearity $B(X)$ and on the semigroup $\{e^{tS} \}_{t\geq 0}$, then we provide in Section \ref{section-Uniqueness}, for the reader's convenience, a proof of uniqueness of bounded solutions for deterministic viscous dyadic model \eqref{thm-scaling-limit.1}, as well as some results on the Leray-Hopf solutions.

\subsection{Some basic estimates}\label{subs-basic-estimates}

We first prove an estimate on the nonlinear term $B(x,y)$.

\begin{lemma}\label{lem-nonlinearity-2}
Let $x\in H^{a},\ y\in H^{b},\, a,b\in\R $, then
  $$
  \| B(x,y) \|_{H^{-1+a+b}} \lesssim_{\lambda} \|x\|_{H^{a}} \|y\|_{H^{b}}.
  $$
\end{lemma}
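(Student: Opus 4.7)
The plan is to expand the weighted $\ell^2$ norm defining $H^{-1+a+b}$ and bound the two terms in $B(x,y)_n$ separately after reindexing.

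First, using the elementary inequality $(u+v)^2 \le 2(u^2+v^2)$, I would write
\[
  \|B(x,y)\|_{H^{-1+a+b}}^2 = \sum_{n\ge 1} \lambda^{2n(-1+a+b)} \bigl|\lambda^{n-1} x_{n-1} y_{n-1} - \lambda^n x_n y_{n+1}\bigr|^2 \le 2(I+II),
\]
where
\[
  I = \sum_{n\ge 1} \lambda^{2n(-1+a+b)+2(n-1)} x_{n-1}^2 y_{n-1}^2, \qquad II = \sum_{n\ge 1} \lambda^{2n(-1+a+b)+2n} x_n^2 y_{n+1}^2.
\]
Next I would shift the index in $I$ by $n \mapsto n+1$ (using $x_0=0$) so that the summand becomes a clean multiple of $\lambda^{2n(a+b)} x_n^2 y_n^2$, and rewrite $II$ in terms of $\lambda^{2na} x_n^2$ and $\lambda^{2(n+1)b} y_{n+1}^2$ (absorbing the leftover $\lambda^{-2b}$ into the implicit $\lambda$-dependent constant).

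The key estimate is then the trivial weighted $\ell^\infty$-into-$\ell^2$ embedding: for any $s\in\R$ and any $m$,
\[
  \lambda^{2ms} y_m^2 \le \sum_{k\ge 1} \lambda^{2ks} y_k^2 = \|y\|_{H^s}^2.
\]
Applying this to pull out $\sup_n \lambda^{2nb} y_n^2$ (resp.\ $\sup_n \lambda^{2(n+1)b} y_{n+1}^2$) from each sum leaves precisely $\sum_n \lambda^{2na} x_n^2 = \|x\|_{H^a}^2$ as the remaining factor, yielding the claim with a constant of the form $2\lambda^{2(-1+a+b)} + 2\lambda^{-2b}$, which depends only on $\lambda$ (and on $a,b$, but these are fixed exponents).

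There is no real obstacle here; the only point to be careful about is that the argument is genuinely symmetric in using the weighted $\ell^\infty$ bound on $y$ while keeping the full $H^a$ norm of $x$, and that it works uniformly for all real $a,b$ (the weighted $\ell^\infty \hookrightarrow \ell^2$ inequality holds regardless of sign). The convention $x_0=0$ ensures the shifted sum in $I$ starts at $n\ge 1$ without boundary corrections.
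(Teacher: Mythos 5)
Your proposal is correct and follows essentially the same route as the paper: split the squared $H^{-1+a+b}$ norm into the two diagonal terms, reindex, and bound one factor by its weighted $\ell^\infty$ norm (which is dominated by the corresponding $H^s$ norm) while summing the other. The only cosmetic difference is which of the two factors you take the supremum over (you use $y$, the paper uses $x$ in its first term), which is immaterial by symmetry.
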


\begin{proof}
By direct calculation,
  $$\begin{aligned}
  \| B(x,y) \|_{H^{-1+a+b}}^2 &= \sum\limits_{n=1}^{\infty} \lambda^{2(-1+a+b)n}(\lambda^{n-1}x_{n-1}y_{n-1}-\lambda^n x_n y_{n+1})^2\\
  &\lesssim \sum\limits_{n=1}^{\infty} \lambda^{2(-1+a+b)n}\lambda^{2(n-1)}x_{n-1}^2y_{n-1}^2 + \sum\limits_{n=1}^{\infty} \lambda^{2(-1+a+b)n}\lambda^{2n} x_n^2 y_{n+1}^2 \\
  &:= J_1 +J_2.
  \end{aligned}$$
For $J_1$, we have
  $$\begin{aligned}
  J_1 &= \lambda^{2(-1+a+b)} \sum_{n=1}^{\infty} \lambda^{2a(n-1)} x_{n-1}^2 \lambda^{2b(n-1)} y_{n-1}^2\\
  &\le \lambda^{2(-1+a+b)} \bigg(\sup_{n\ge 1} \lambda^{2a(n-1)} x_{n-1}^2 \bigg) \sum_{n=1}^{\infty} \lambda^{2b(n-1)} y_{n-1}^2 \\
  &\le \lambda^{2(-1+a+b)} \|x\|_{H^{a}}^2 \|y\|_{H^{b}}^2.
  \end{aligned}$$
Similarly, we can obtain
  $$
  J_2 \le \lambda^{-2b} \|x\|_{H^{a}}^2 \|y\|_{H^{b}}^2.
  $$
Combining the above two estimates, we complete the proof.
\end{proof}

\begin{corollary}\label{subs-2-cor}
If $ a+b+c\geq 1$, then the trilinear functional
  $$H^a\times H^b\times H^c\ni (x,y,z)\mapsto \<B(x,y),z\>$$
is continuous in each argument; as a consequence, if $a+2b\ge 1$, then $\<B(x,y),y\>=0$.
\end{corollary}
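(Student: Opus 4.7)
The plan is to deduce the continuity part directly from Lemma \ref{lem-nonlinearity-2} together with the duality pairing between $H^s$ and $H^{-s}$, and then to obtain the cancellation identity by first verifying it on finitely supported sequences and then extending by density using the continuity just established.

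For the first claim, I would note that Lemma \ref{lem-nonlinearity-2} gives $B(x,y) \in H^{-1+a+b}$ with $\|B(x,y)\|_{H^{-1+a+b}} \lesssim_\lambda \|x\|_{H^a}\|y\|_{H^b}$. The condition $a+b+c \geq 1$ is exactly $c \geq 1-(a+b)$, and since $\lambda>1$ and the index $n$ ranges over $n\geq 1$, the norms $\|z\|_{H^s}$ are monotone nondecreasing in $s$; in particular $\|z\|_{H^{1-a-b}} \leq \|z\|_{H^c}$. Combining these via the $H^{-1+a+b}$–$H^{1-a-b}$ duality, I get
$$|\langle B(x,y),z\rangle| \le \|B(x,y)\|_{H^{-1+a+b}}\|z\|_{H^{1-a-b}} \lesssim_\lambda \|x\|_{H^a}\|y\|_{H^b}\|z\|_{H^c},$$
which is trilinear in $(x,y,z)$ and hence continuous in each argument.

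For the cancellation $\langle B(x,y),y\rangle = 0$ under $a+2b\ge 1$, I would first take $x, y$ to be finitely supported sequences, where all sums are finite and there is no convergence issue. A direct computation gives
$$\langle B(x,y),y\rangle = \sum_{n\ge 1}\lambda^{n-1}x_{n-1}y_{n-1}y_n - \sum_{n\ge 1}\lambda^n x_n y_n y_{n+1};$$
re-indexing the first sum via $m=n-1$ and using the convention $x_0=0$ turns it into $\sum_{m\ge 1}\lambda^m x_m y_m y_{m+1}$, which exactly cancels the second sum. Then, since $a+2b\ge 1$ is the case $c=b$ of the continuity statement just proved, the trilinear form $\langle B(x,y),y\rangle$ is continuous on $H^a\times H^b\times H^b$; since finitely supported sequences are dense in each $H^s$, the identity extends to arbitrary $x\in H^a$, $y\in H^b$.

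I do not anticipate any real obstacle here. The only point that deserves a moment's care is the monotonicity of the $H^s$-norms in $s$, which is where the assumption $\lambda>1$ and the indexing $n\ge 1$ enter, and the simple algebraic re-indexing that produces the cancellation.
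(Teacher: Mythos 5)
Your proposal is correct and follows essentially the same route as the paper: the continuity claim is obtained from Lemma \ref{lem-nonlinearity-2} combined with duality and the monotonicity of the $H^s$-norms in $s$ (the paper applies the monotonicity to $\|B(x,y)\|_{H^{-c}}\le\|B(x,y)\|_{H^{-1+a+b}}$ rather than to $\|z\|$, which is an immaterial difference), and the cancellation is verified on finitely supported sequences and extended by density via the continuity just proved. The explicit re-indexing computation you give for the finite case is correct and merely fills in a step the paper leaves implicit.
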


\begin{proof}
We have
  $$|\<B(x,y),z\>| \le \|B(x,y) \|_{H^{-c}} \|z \|_{H^{c}} \le \|B(x,y) \|_{H^{-1+a+b}} \|z \|_{H^{c}}, $$
where the last step is due to $-c\le -1+a+b$. Then by Lemma \ref{lem-nonlinearity-2}, we have $|\<B(x,y),z\>| \lesssim  \|x \|_{H^{a}} \|y \|_{H^{b}} \|z \|_{H^{c}}$ which implies the first claim. To prove the second one, approximating $x\in H^a, \, y\in H^b $ by vectors $x^N,\, y^N$ with only finitely many nonzero components, and using the identity $\big\<B(x^N, y^N), y^N \big\>=0$, we immediately get the result thanks to the continuity of the trilinear functional.
\end{proof}

The next two lemmas concern properties of $\{e^{tS} \}_{t\geq 0}$ which are similar to those of the heat semigroup.

\begin{lemma}\label{similar-heat-semigroup-property}
Let $x\in H^a$, $a \in \mathbb{R}$. Then:
\begin{itemize}
\item[\rm (i)] for any $\rho \ge 0$, it holds that $\|e^{tS}x\|_{H^{a+\rho}} \le C_{\rho} t^{-\frac{\rho}{2}}\|x\|_{H^{a}}$, where $C_{\rho} $ is an decreasing function when $\rho \le 1$ and an increasing function of $\rho$ when $\rho \ge 1$;
\item[\rm (ii)] for any $\rho\in[0,2]$, it holds that $\|(I-e^{tS})x\|_{H^{a-\rho}} \lesssim t^{\frac{\rho}{2}}\|x\|_{H^{a}} $.
\end{itemize}
\end{lemma}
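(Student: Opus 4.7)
The plan is to exploit the fact that $S=-\mathrm{diag}(\lambda^{2n})_{n\geq 1}$, so that the semigroup acts componentwise as $(e^{tS}x)_n = e^{-\lambda^{2n}t}x_n$. Both norms in the statement are then weighted $\ell^2$-sums with purely scalar multipliers, and the whole lemma reduces to two elementary one-variable bounds.

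For part (i), I would expand
$$\|e^{tS}x\|_{H^{a+\rho}}^2 = \sum_{n\geq 1}\lambda^{2an} x_n^2 \cdot \bigl(\lambda^{2\rho n} e^{-2\lambda^{2n}t}\bigr),$$
then factor out the supremum of the scalar weight, noting that the sum with the remaining factor $\lambda^{2an}x_n^2$ is exactly $\|x\|_{H^a}^2$. To bound $\sup_{n\geq 1}\lambda^{2\rho n}e^{-2\lambda^{2n}t}$ I would pass to the continuous analogue $\sup_{s>0} s^\rho e^{-2st}$; with the substitution $u=st$ this equals $t^{-\rho}\sup_{u>0} u^\rho e^{-2u} = t^{-\rho}(\rho/2)^\rho e^{-\rho}$, achieved at $u=\rho/2$. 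This gives the desired constant $C_\rho^2 = (\rho/2)^\rho e^{-\rho}$ (with the natural convention $C_0=1$), and the monotonicity of $C_\rho$ in $\rho$ stated in the lemma follows by differentiating $\log C_\rho$ in $\rho$.

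For part (ii), I would similarly write
$$\|(I-e^{tS})x\|_{H^{a-\rho}}^2 = \sum_{n\geq 1}\lambda^{2an}x_n^2 \cdot \lambda^{-2\rho n}\bigl(1-e^{-\lambda^{2n}t}\bigr)^2,$$
and reduce matters to the scalar inequality $1-e^{-s}\leq s^{\rho/2}$, valid for every $s\geq 0$ as soon as $\rho\in[0,2]$. I would justify this inequality by splitting at $s=1$: for $s\leq 1$ the tangent-line bound $1-e^{-s}\leq s\leq s^{\rho/2}$ applies since $\rho/2\leq 1$, while for $s\geq 1$ we trivially have $1-e^{-s}\leq 1\leq s^{\rho/2}$. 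Taking $s=\lambda^{2n}t$ then yields $\lambda^{-2\rho n}(1-e^{-\lambda^{2n}t})^2\leq t^\rho$, and summing gives the desired bound with constant $1$.

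I do not foresee any real obstacle — the diagonal form of $S$ trivialises everything, and the only minor care required is in verifying the elementary scalar inequality in (ii) and the monotonicity claim in (i). The one mild sanity check is that replacing the discrete $\sup_{n\geq 1}$ by the continuous $\sup_{s>0}$ is harmless, which is immediate because $\{\lambda^{2n}:n\geq 1\}\subset(0,\infty)$.
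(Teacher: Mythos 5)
Your proof is correct and follows essentially the same route as the paper: both reduce the statement to the elementary scalar bounds $y^{\eta}e^{-y}\le \eta^{\eta}e^{-\eta}$ and $1-e^{-y}\le y^{\rho/2}$ (for $\rho\in[0,2]$) applied componentwise to the diagonal semigroup. The only cosmetic difference is that your exact constant $C_\rho=(\rho/2)^{\rho/2}e^{-\rho/2}$ has its monotonicity turning point at $\rho=2$ rather than $\rho=1$; the paper instead first uses $e^{-2y}\le e^{-y}$, which yields $C_\rho=\rho^{\rho/2}e^{-\rho/2}$ and the turning point at $\rho=1$ as stated in the lemma.
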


\begin{proof}
First we prove (i). Notice that for any $ \eta >0, y>0$, we have $y^{\eta}e^{-y} \le \eta^{\eta}e^{-\eta}=C_{\eta}$, where $C_{\eta}$ is an decreasing function when $\eta \le 1$ and an increasing function of $\eta$ when $\eta \ge 1$. Therefore, setting $\eta =\rho,\ y=t\lambda^{2n}$, we can easily get that
  $$
  \|e^{tS}x\|_{H^{a+\rho}}=\bigg[ \sum\limits_{n=1}^{\infty}\lambda^{2n(a+\rho)} \big( e^{-t\lambda^{2n}}x_n \big)^2 \bigg]^{\frac12} \le C_{\rho} t^{-\frac{\rho}{2}}\|x\|_{H^{a}}.
  $$

Next we prove (ii). Since for any $ \eta\in[0,1],\ y>0$, we have $1-e^{-y}\lesssim y^{\eta}$. Hence setting $\eta=\frac{\rho}{2},\ y=t\lambda^{2n}$, we can easily obtain that
  $$
  \|(I-e^{tS})x\|_{H^{a-\rho}}=\bigg[ \sum\limits_{n=1}^{\infty} \lambda^{2n(a-\rho)} \big( \big(1-e^{-t\lambda^{2n}}\big) x_n \big)^2 \bigg]^{\frac12}
   \lesssim t^{\frac{\rho}{2}}\|x\|_{H^{a}}.
  $$
This completes the proof.
\end{proof}

\begin{lemma}\label{lem-heat-integral-property}
For any $a \in \mathbb{R}$ and $ f\in L_t^2 H^{a}$, it holds that for $ t\in [0,T]$,
  $$
  \bigg\|\int_0^t e^{\nu (t-r)S}f_r \,\d r \bigg\|_{H^{a+1}}^2 \lesssim \frac{1}{\nu}\int_0^t \|f_r\|_{H^{a}}^2 \,\d r.
  $$
Similarly, for $s< \tau$ we have
  $$
  \int_s^{\tau} \bigg\|\int_s^t e^{\nu (t-r)S}f_r \,\d r \bigg\|_{H^{a+2}}^2 \,\d t  \lesssim \frac{1}{\nu^2}\int_s^{\tau} \|f_r\|_{H^{a}}^2 \,\d r.
  $$

\end{lemma}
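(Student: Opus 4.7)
My plan is to exploit the fact that $S$ is diagonal, so the semigroup $e^{tS}$ acts componentwise as multiplication by $e^{-t\lambda^{2n}}$ on the $n$-th component. Once I expand both norms in components, the two estimates reduce to standard scalar maximal-regularity bounds for the one-dimensional heat-type operator $\partial_t + \nu\lambda^{2n}$, summed against the weights $\lambda^{2n(a+1)}$ or $\lambda^{2n(a+2)}$. The factor $\lambda^{2n}$ produced by the $H^1$ or $H^2$ gain is exactly what cancels the $1/(\nu\lambda^{2n})$ arising from integrating the exponential kernel.

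For part (i), writing $f_r=(f_{r,n})_{n\geq 1}$, the $n$-th component of $\int_0^t e^{\nu(t-r)S}f_r\,\d r$ equals $\int_0^t e^{-\nu(t-r)\lambda^{2n}}f_{r,n}\,\d r$. I would apply Cauchy--Schwarz in $r$ with the weight $e^{-\nu(t-r)\lambda^{2n}}$:
\begin{equation*}
\bigg(\int_0^t e^{-\nu(t-r)\lambda^{2n}}f_{r,n}\,\d r\bigg)^2 \leq \bigg(\int_0^t e^{-\nu(t-r)\lambda^{2n}}\,\d r\bigg) \int_0^t e^{-\nu(t-r)\lambda^{2n}}f_{r,n}^2\,\d r \leq \frac{1}{\nu\lambda^{2n}} \int_0^t f_{r,n}^2\,\d r.
\end{equation*}
Multiplying by $\lambda^{2n(a+1)}$ and summing in $n$ produces exactly $\nu^{-1}\sum_n\lambda^{2na}\int_0^t f_{r,n}^2\,\d r=\nu^{-1}\int_0^t\|f_r\|_{H^a}^2\,\d r$, which is the claim. (One can keep the inner exponential and bound $\int_0^t e^{-\nu(t-r)\lambda^{2n}}f_{r,n}^2\,\d r\leq \int_0^t f_{r,n}^2\,\d r$; this is harmless.)

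For part (ii), I would argue componentwise using Young's convolution inequality in the time variable instead of Cauchy--Schwarz, since we need an $L^2_t$ bound on the output. For fixed $n$, setting $K_n(u)=e^{-\nu u\lambda^{2n}}\mathbf{1}_{u\geq 0}$, one has
\begin{equation*}
\int_s^t e^{-\nu(t-r)\lambda^{2n}} f_{r,n}\,\d r = \bigl(K_n * (f_{\cdot,n}\mathbf{1}_{[s,\tau]})\bigr)(t),
\end{equation*}
so Young's inequality $\|K_n*g\|_{L^2(\R)}\leq \|K_n\|_{L^1(\R)}\|g\|_{L^2(\R)}$ with $\|K_n\|_{L^1}=(\nu\lambda^{2n})^{-1}$ gives
\begin{equation*}
\int_s^\tau \bigg(\int_s^t e^{-\nu(t-r)\lambda^{2n}} f_{r,n}\,\d r\bigg)^2\,\d t \leq \frac{1}{\nu^2\lambda^{4n}}\int_s^\tau f_{r,n}^2\,\d r.
\end{equation*}
Multiplying by $\lambda^{2n(a+2)}$, the extra $\lambda^{4n}$ in the numerator exactly cancels the $\lambda^{-4n}$ from the kernel, leaving $\lambda^{2na}$; summing in $n$ yields the desired $\nu^{-2}\int_s^\tau\|f_r\|_{H^a}^2\,\d r$.

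There is no real obstacle here: the whole proof is a one-line diagonal reduction followed by Cauchy--Schwarz in (i) and Young's convolution inequality in (ii). The only mild care is in bookkeeping the exponents $2n(a+1)-2n$ and $2n(a+2)-4n$ to see that both collapse to $2na$, which is what allows reassembling the $H^a$ norm of $f_r$ at the end.
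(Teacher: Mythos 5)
Your proof is correct and follows essentially the same route as the paper: a componentwise (diagonal) reduction, then weighted Cauchy--Schwarz for the first bound. For the second bound you invoke Young's convolution inequality $\|K_n*g\|_{L^2}\le\|K_n\|_{L^1}\|g\|_{L^2}$, which is just the packaged form of the paper's inline argument (weighted Cauchy--Schwarz followed by Fubini), so the two proofs coincide in substance.
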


\begin{proof}
By Cauchy's inequality,
  $$\begin{aligned}
   \bigg\|\int_0^t e^{\nu (t-r)S}f_r \,\d r \bigg\|_{H^{a+1}}^2
  &= \sum\limits_{n=1}^{\infty} \lambda^{2(a+1)n}\bigg(\int_0^t e^{-\nu(t-r)\lambda^{2n}}f_n(r) \,\d r \bigg)^2\\
  &\le \sum\limits_{n=1}^{\infty} \lambda^{2(a+1)n} \int_0^t e^{-2\nu(t-r)\lambda^{2n}} \d r \int_0^t f_n^2(r) \,\d r\\
  &\lesssim \frac{1}{\nu} \int_0^t \sum\limits_{n=1}^{\infty} \lambda^{2a n} f_n^2(r) \,\d r =  \frac{1}{\nu} \int_0^t \| f(r)\|_{H^{a}}^2 \,\d r.
  \end{aligned}$$
For the second inequality, we apply Cauchy's inequality as follows:
  $$\aligned
  \bigg(\int_s^t e^{-\nu(t-r)\lambda^{2n}}f_n(r) \,\d r \bigg)^2 &\leq \int_s^t e^{-\nu(t-r)\lambda^{2n}} \,\d r \int_s^t e^{-\nu(t-r)\lambda^{2n}} f_n^2(r) \,\d r \\
  &\leq \frac1{\nu \lambda^{2n}} \int_s^t e^{-\nu(t-r)\lambda^{2n}} f_n^2(r) \,\d r .
  \endaligned $$
Integrating both sides and using Fubini's Theorem, we can easily get the second estimate by similar calculation.
\end{proof}

\subsection{Some uniqueness results for viscous dyadic models}\label{section-Uniqueness}

In this part, we consider the viscous dyadic model in \cite{Ches08}, namely, we fix some $\lambda>1$, $\alpha>0$ and $\nu>0$, and consider
  \begin{equation}\label{dm-1}
  \frac{\d X_n}{\d t}= \lambda^{n-1} X_{n-1}^2 - \lambda^n X_n X_{n+1} - \nu \lambda^{2\alpha n} X_n, \quad n\geq 1,
  \end{equation}
with initial condition $X(0)=x\in H= \ell^2$. Note that if $\alpha=1$, then this system coincides with the component form of \eqref{thm-scaling-limit.1}. We define the diagonal matrix
  $$S^\alpha= -{\rm diag}(\lambda^{2\alpha}, \lambda^{4\alpha},\ldots) $$
and simply write $S^1$ as $S$; then the system of equations \eqref{dm-1} can be written as
  \begin{equation}\label{dm-2}
  \frac{\d X}{\d t}= B(X)+ \nu S^\alpha X, \quad X(0) =x.
  \end{equation}
We first prove a uniqueness result in the case $\alpha=1$.

\begin{proposition}\label{prop-uniqueness}
If $\alpha=1$, then for any $x\in \ell^2$, the viscous dyadic model \eqref{dm-2} has at most one solution in $L^\infty(0,T; \ell^2)$.
\end{proposition}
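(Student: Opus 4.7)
I would take two solutions $X^1, X^2 \in L^\infty(0,T;\ell^2)$ with the same initial datum $x$ and set $Y = X^1 - X^2$. Then $Y(0) = 0$ and, by bilinearity of $B$,
\[
\dot Y = B(Y, X^1) + B(X^2, Y) + \nu S Y
\]
componentwise, each $Y_n$ being $C^1$ on $[0, T]$. The hypothesis $X^i \in L^\infty(0,T;\ell^2)$ yields a constant $M$ with $\|X^i(t)\|_{\ell^2} \le M$, and in particular $|X^i_n(t)| \le M$ uniformly in $n, t, i$.

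I would perform an energy estimate for $Y$ in $H^{-1}$, setting
\[
E(t) := \|Y(t)\|_{H^{-1}}^2 = \sum_{n \ge 1} \lambda^{-2n} Y_n(t)^2.
\]
The choice of $H^{-1}$ is dictated by the geometry of the equation: $S$ is ``order $2$'' in the exponential scale, so
\[
\langle Y, S Y\rangle_{H^{-1}} \;=\; \sum_n \lambda^{-2n} Y_n \cdot (-\lambda^{2n} Y_n) \;=\; -\|Y\|_{\ell^2}^2,
\]
which converts the viscous term into a dissipation of strength $\nu\|Y\|_{\ell^2}^2$ on the right-hand side. Simultaneously, Lemma \ref{lem-nonlinearity-2} applied with $a = b = 0$ gives $\|B(Y, X^1)\|_{H^{-1}} + \|B(X^2, Y)\|_{H^{-1}} \lesssim_\lambda M \|Y\|_{\ell^2}$, so Cauchy--Schwarz in $H^{-1}$ yields
\[
\bigl|\langle Y, B(Y, X^1) + B(X^2, Y)\rangle_{H^{-1}}\bigr| \;\lesssim_\lambda\; M\, \sqrt{E}\, \|Y\|_{\ell^2}.
\]
Splitting this via Young's inequality as $\tfrac{\nu}{2}\|Y\|_{\ell^2}^2 + C(\lambda, M, \nu) E$ and combining with the dissipation term produces $\dot E \le C(\lambda, M, \nu) E$, whereupon Gronwall with $E(0)=0$ forces $E \equiv 0$ and hence $X^1 \equiv X^2$.

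The main technical point I would have to handle carefully is the chain rule for $\dot E$. A priori $\dot Y$ is \emph{not} globally bounded in $H^{-1}$: the term $\nu S Y$ only satisfies $\|S Y\|_{H^{-1}} = \|Y\|_{H^1}$, which need not be finite for a bare $L^\infty_t \ell^2$ solution. To bypass this I would differentiate the truncated sum $E_N(t) := \sum_{n=1}^{N} \lambda^{-2n} Y_n(t)^2$, use the fact that each $Y_n$ is $C^1$ in time from the componentwise equation, check that $2\sum_{n=1}^{N} \lambda^{-2n} Y_n \dot Y_n$ converges uniformly on $[0, T]$ as $N \to \infty$ (the geometric weights $\lambda^{-2n}$ combined with $Y \in \ell^2$ and the explicit expression of $\dot Y_n$ suffice, even though the individual $\lambda^{2n} Y_n$ pieces are not summable), and pass to the limit; once this is done the Gronwall argument above goes through verbatim. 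Everything else is mechanical — Lemma \ref{lem-nonlinearity-2} is already sharp enough to close the estimate, precisely because the $H^{-1}$--$\ell^2$ duality consumes the $\ell^2$ factor supplied by $B$, leaving the other factor to match $\sqrt E$.
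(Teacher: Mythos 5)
Your argument is correct, but it takes a genuinely different route from the paper's. The paper works with the mild (Duhamel) formulation $X^i(t)=e^{\nu tS}x+\int_0^te^{\nu(t-r)S}B(X^i(r))\,\d r$ and applies the maximal-regularity-type estimate of Lemma \ref{lem-heat-integral-property} (gain of one derivative at the cost of a factor $\nu^{-1/2}$) together with Lemma \ref{lem-nonlinearity-2} to close a Gronwall loop directly for $\|X^1(t)-X^2(t)\|_{\ell^2}^2$; no time derivative of any norm is ever taken, so the chain-rule issue you worry about simply does not arise there (at the mild cost of having to justify that an $L^\infty_t\ell^2$ solution satisfies the mild formulation, which is a routine componentwise Duhamel computation). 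You instead run a differential energy estimate in $H^{-1}$, using the exact cancellation $\<Y,SY\>_{H^{-1}}=-\|Y\|_{\ell^2}^2$ to generate the dissipation that absorbs, via Young's inequality, the $M\sqrt{E}\,\|Y\|_{\ell^2}$ bound coming from Lemma \ref{lem-nonlinearity-2} with $a=b=0$ — the same lemma the paper uses, deployed at the same regularity. Your route avoids Lemma \ref{lem-heat-integral-property} entirely and stays closer to the componentwise definition of solution, at the price of the term-by-term differentiation you flag. One small correction there: the tails of the nonlinear contributions to $\dot E_N$ do carry geometric factors $\lambda^{-n}$ and are uniformly small, but the dissipative contribution $-\nu\sum_{n>N}Y_n^2(t)$ has no such factor, and uniform-in-$t$ smallness of $\ell^2$ tails of $Y$ is not guaranteed for a bare $L^\infty_t\ell^2$ solution; the clean fix is to write $E_N(t)=\int_0^t\dot E_N(s)\,\d s$, pass to the limit by dominated convergence (the integrands are uniformly bounded by $C(\nu,M,\lambda)$), and run Gronwall in integral form for the absolutely continuous limit $E$. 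With that adjustment your proof closes, and both approaches yield the same conclusion; the paper's is shorter given that Lemma \ref{lem-heat-integral-property} is already on hand, while yours is self-contained modulo Lemma \ref{lem-nonlinearity-2}.
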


\begin{proof}
Let $X^1, X^2\in L^\infty(0,T; \ell^2)$ be two solutions to  \eqref{dm-2}; we write the equations in mild form
  $$ X^i(t) = e^{\nu tS}x + \int_0^t e^{\nu (t-r)S} B( X^i(r) )\,\d r, \quad t\in [0,T],\, i=1,2. $$
As a result, letting $f(t)= X^1(t) - X^2(t)$, we have
  $$f(t) = \int_0^t e^{\nu (t-r)S}\big[ B(f(r), X^1(r)) + B(X^2(r), f(r))\big]\,\d r . $$
Then by Lemmas \ref{lem-heat-integral-property} and \ref{lem-nonlinearity-2},
  $$\aligned
  \|f(t) \|_{\ell^2}^2 &\lesssim \frac1\nu \int_0^t \big\| B(f(r), X^1(r)) + B(X^2(r), f(r))\big\|_{H^{-1}}^2\,\d r \\
  &\lesssim \frac1\nu \int_0^t \|f(r)\|_{\ell^2}^2 \big[\| X^1(r) \|_{\ell^2}^2 + \| X^2(r) \|_{\ell^2}^2 \big]\,\d r.
  \endaligned $$
Since $X^1,\, X^2\in L^\infty(0,T; \ell^2)$, the Gronwall inequality immediately gives us that $\|f(t) \|_{\ell^2} =0$ for all $t\in [0,T]$.
\end{proof}

We also have the following results.

\begin{proposition}\label{prop-Leray-Hopf}
Let $\alpha>0$. For any $x\in \ell^2$, there exists a Leray-Hopf solution $X(t)$ of \eqref{dm-1}. In particular, the energy inequality
  $$\|X(t) \|_{\ell^2}^2 + 2\nu\int_{t_0}^t \|X(s) \|_{H^\alpha}^2\,\d s \leq \|X(t_0) \|_{\ell^2}^2 $$
holds for all $0\leq t_0 \leq t$, $t_0$ a.e. in $[0,\infty)$. Moreover, if $\alpha\geq 1/2$, then the energy equality holds for all $0\le t_0 \le t$ and Leray-Hopf solutions are unique.
\end{proposition}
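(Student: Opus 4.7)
The plan is to establish the three claims in turn: existence of a Leray--Hopf solution for arbitrary $\alpha>0$, the energy inequality, and then the energy equality together with uniqueness under the regularizing condition $\alpha\ge 1/2$.

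For existence I would use Galerkin truncation. Fix $N\ge 1$ and consider the finite system obtained by setting $X_n\equiv 0$ for $n>N$; this is a locally Lipschitz ODE on $\R^N$, and because the truncated nonlinearity satisfies $\<B(X^N), X^N\>=0$ (the telescoping structure is unaffected by the cut-off), one obtains the exact energy identity
\[
\|X^N(t)\|_{\ell^2}^2+2\nu\int_0^t\|X^N(s)\|_{H^\alpha}^2\,\d s=\|x^N\|_{\ell^2}^2,
\]
where $x^N$ is the $N$-th truncation of $x$. This already gives a uniform bound in $L^\infty(0,T;\ell^2)\cap L^2(0,T;H^\alpha)$. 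To upgrade this to compactness in time, I apply Lemma~\ref{lem-nonlinearity-2} with $a=b=0$ to get $\|B(X^N)\|_{H^{-1}}\lesssim\|X^N\|_{\ell^2}^2$, so $\partial_t X^N$ is uniformly bounded in $L^2(0,T;H^{-\beta})$ for any $\beta\ge\max(1,2\alpha)$. An Aubin--Lions argument then yields a subsequence converging strongly in $L^2(0,T;H^{\alpha'})$ for some $\alpha'<\alpha$, and pointwise almost everywhere for every component. Strong convergence of components together with the uniform $\ell^2$-bound is enough to pass to the limit in the quadratic nonlinearity of each scalar equation of \eqref{dm-1}, producing a Leray--Hopf solution $X$.

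The energy inequality comes essentially for free: the $L^\infty(0,T;\ell^2)$ and $L^2(0,T;H^\alpha)$ bounds are preserved as $N\to\infty$ by lower semicontinuity of norms under weak-* and weak convergence. Passing $N\to\infty$ in the Galerkin energy identity written between $t_0$ and $t$, and applying lower semicontinuity to the left-hand side and convergence of the initial energy $\|x^N\|_{\ell^2}^2\to\|X(t_0)\|_{\ell^2}^2$ for a.e.\ $t_0$ (Lebesgue points of $s\mapsto\|X(s)\|_{\ell^2}^2$), gives the stated energy inequality.

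Now assume $\alpha\ge 1/2$. By Corollary~\ref{subs-2-cor} applied with $a=b=c=\alpha$ (so $a+2b=3\alpha\ge 1$), the trilinear form $\<B(x,y),z\>$ is continuous on $H^\alpha$ and $\<B(y,y),y\>=0$. Since $X\in L^2(0,T;H^\alpha)$ and (by Lemma~\ref{lem-nonlinearity-2} with $a=b=\alpha$) $B(X)\in L^1(0,T;H^{-1+2\alpha})\subset L^1(0,T;H^{-\alpha})$, the equation gives $\partial_tX\in L^1(0,T;H^{-\alpha})$, hence $X\in C([0,T];\ell^2)$ by a standard Lions--Magenes interpolation argument, and the duality product $\<\partial_tX,X\>$ may be integrated to yield $\frac{\d}{\d t}\|X\|_{\ell^2}^2=2\<\partial_t X,X\>=-2\nu\|X\|_{H^\alpha}^2$, i.e.\ the energy equality. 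For uniqueness I follow the template of Proposition~\ref{prop-uniqueness}: if $X^1,X^2$ are two such solutions and $f=X^1-X^2$, then $f$ satisfies
\[
\frac{\d}{\d t}\|f\|_{\ell^2}^2+2\nu\|f\|_{H^\alpha}^2=-2\<B(f,X^1),f\>-2\<B(X^2,f),f\>,
\]
the second term vanishing by the cancellation just recalled. For the first term I use Lemma~\ref{lem-nonlinearity-2} with $a=0,b=\alpha$, and the embedding $H^{-1+\alpha}\hookrightarrow H^{-\alpha}$ (valid precisely because $\alpha\ge 1/2$), to bound $|\<B(f,X^1),f\>|\lesssim_\lambda\|f\|_{\ell^2}\|X^1\|_{H^\alpha}\|f\|_{H^\alpha}$, then Young's inequality absorbs a small fraction of $\nu\|f\|_{H^\alpha}^2$ and leaves $C\nu^{-1}\|X^1\|_{H^\alpha}^2\|f\|_{\ell^2}^2$. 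Since $\|X^1\|_{H^\alpha}^2\in L^1(0,T)$, Gronwall's inequality yields $f\equiv 0$.

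The main obstacle, as is typical for these models, is the regularity bookkeeping in the last paragraph: the condition $\alpha\ge 1/2$ is sharp for both the cancellation $\<B(X),X\>=0$ (via Corollary~\ref{subs-2-cor}) and the embedding $H^{-1+\alpha}\hookrightarrow H^{-\alpha}$ needed in the uniqueness bound, so one must be careful to track that exactly the regularity afforded by the Leray--Hopf class is enough to make both steps rigorous and to upgrade the weakly established energy inequality into equality.
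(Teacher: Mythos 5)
Your proposal is correct and, in substance, follows the same strategy as the paper, though the paper compresses the first two claims into a citation of \cite[Theorem 4.1]{Ches08} (which is exactly the Galerkin/Aubin--Lions argument you sketch) and derives the energy equality from the abstract It\^o-type formula of \cite[Theorem 2.12]{RZ18} in the rigged triple $H^{1/2}\subset\ell^2\subset H^{-1/2}$, rather than from a Lions--Magenes chain rule in the triple $H^\alpha\subset\ell^2\subset H^{-\alpha}$; the uniqueness argument is identical (the paper writes the bound as $\|B(f,X)\|_{H^{-\alpha}}\lesssim\|f\|_{\ell^2}\|X\|_{H^{1-\alpha}}$ and then uses $\|X\|_{H^{1-\alpha}}\le\|X\|_{H^\alpha}$, which is the same use of $\alpha\ge 1/2$ as your embedding $H^{-1+\alpha}\hookrightarrow H^{-\alpha}$). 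One point needs tightening: you only establish $\partial_t X\in L^1(0,T;H^{-\alpha})$, whereas the standard Lions--Magenes lemma giving $X\in C([0,T];\ell^2)$ and $\frac{\d}{\d t}\|X\|_{\ell^2}^2=2\<\partial_t X,X\>$ requires $\partial_t X\in L^2(0,T;H^{-\alpha})$. This is easily repaired: applying Lemma \ref{lem-nonlinearity-2} with $a=0$, $b=1-\alpha$ gives $\|B(X)\|_{H^{-\alpha}}\lesssim_\lambda\|X\|_{\ell^2}\|X\|_{H^{1-\alpha}}\le\|X\|_{\ell^2}\|X\|_{H^\alpha}$, and since $X\in L^\infty(0,T;\ell^2)\cap L^2(0,T;H^\alpha)$ this yields the needed $L^2$ bound (here again $\alpha\ge 1/2$ enters). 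Finally, your closing remark that $\alpha\ge 1/2$ is ``sharp'' for the cancellation $\<B(X),X\>=0$ is inaccurate: Corollary \ref{subs-2-cor} with $a=b=\alpha$ only requires $3\alpha\ge 1$, i.e.\ $\alpha\ge 1/3$; the threshold $1/2$ is dictated by the embedding used in the uniqueness estimate and by the $L^2_t H^{-\alpha}$ bound on $\partial_t X$, not by the cancellation itself.
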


\begin{proof}
The first two assertions are proved in \cite[Theorem 4.1]{Ches08}; in particular, we see from the proof that $t_0$ can take the value $0$. Here we only prove the remaining part. To show the energy equality on any time interval $[0,T]$, we shall apply \cite[Theorem 2.12]{RZ18}. For this purpose, we take the triple $H^{1/2} \subset \ell^2 \subset H^{-1/2}$; using the terminology of \cite{RZ18}, $\ell^2$ equipped with the pair $H^{1/2},\, H^{-1/2}$ is called a rigged Hilbert space. It is not difficult to see that the conditions in \cite[Theorem 2.12]{RZ18} can be verified; thus, up to changing values on a subset of $[0,T]$ with zero Lebesgue measure, the solution $X$ satisfies the following identity: for all $t\in [0,T]$,
  $$\aligned
  \|X(t) \|_{\ell^2}^2 &= \|X(0) \|_{\ell^2}^2 + 2\int_0^t \big\<X(s), B(X(s))+\nu S^\alpha X(s) \big\>\,\d s \\
  &= \|X(0) \|_{\ell^2}^2 - 2\nu \int_0^t \|X(s) \|_{H^\alpha}^2 \,\d s,
  \endaligned $$
where in the second step we have used Corollary \ref{subs-2-cor}. It is clear that we can replace the starting time by any $t_0\in [0,T]$ and $t_0\le t$.

It remains to show the uniqueness of Leray-Hopf solutions. Let $X$ and $\tilde X$ be two Leray-Hopf solutions to \eqref{dm-2}, then they belong to $L^\infty(0,T; \ell^2) \cap L^2(0,T; H^\alpha)$. We have
  $$\aligned
  \frac{\d}{\d t} (X-\tilde X) &=B(X-\tilde X, X) + B(\tilde X, X-\tilde X)+ \nu S^\alpha (X-\tilde X) .
  \endaligned $$
Multiplying both sides by $X-\tilde X$ in $\ell^2$ gives us
  $$\frac12 \frac{\d}{\d t} \|X-\tilde X \|_{\ell^2}^2 = -\nu \|X-\tilde X \|_{H^\alpha}^2 + \big\<B(X-\tilde X, X), X-\tilde X \big\>, $$
where we have used the orthogonality property $\<B(x,y),y\>=0$. By Lemma \ref{lem-nonlinearity-2} and Cauchy's inequality, we arrive at
  $$\aligned
  \frac12 \frac{\d}{\d t} \|X-\tilde X \|_{\ell^2}^2 &\leq -\nu \|X-\tilde X \|_{H^\alpha}^2 + \|B(X-\tilde X, X) \|_{H^{-\alpha}} \|X-\tilde X \|_{H^\alpha} \\
  &\leq -\nu \|X-\tilde X \|_{H^\alpha}^2 + C_{\lambda,\alpha} \|X-\tilde X \|_{\ell^2}\, \|X\|_{H^{1-\alpha}} \|X-\tilde X \|_{H^\alpha} \\
  &\leq \frac{ C_{\lambda,\alpha}^2}{4\nu} \|X-\tilde X \|_{\ell^2}^2 \|X\|_{H^{1-\alpha}}^2.
  \endaligned $$
Since $\alpha\geq 1/2$, we have $\|X\|_{H^{1-\alpha}} \leq \|X\|_{H^\alpha}$, thus
  $$ \frac{\d}{\d t} \|X-\tilde X \|_{\ell^2}^2 \leq \frac{ C_{\lambda,\alpha}^2}{2\nu} \|X-\tilde X \|_{\ell^2}^2 \|X\|_{H^\alpha}^2. $$
As $X\in L^2(0,T; H^\alpha)$, the function $t\mapsto \|X(t) \|_{H^\alpha}^2$ is integrable, the Gronwall inequality implies uniqueness of solutions.
\end{proof}

\section{Well posedness of \eqref{stoch-dya-model-N} and scaling limit}\label{section-existence-Scaling-limit}

This section consists of two parts: Section \ref{subsec-weak-existence} is devoted to the well posedness of the stochastic dyadic model \eqref{stoch-dya-model-N}, by using the method of Girsanov transform; we give in Section \ref{subsec-weak-convergence} a heuristic proof of the weak convergence of \eqref{stoch-dya-model-N} to the deterministic dyadic model \eqref{thm-scaling-limit.1}.

\subsection{Well posedness of \eqref{stoch-dya-model-N}} \label{subsec-weak-existence}

In this section, we apply the Girsanov transform, as in \cite[Section 3.4]{Flandoli10}, to prove the well posedness for equation \eqref{stoch-dya-model-N} with initial condition $X(0)=x \in \ell^2$. We start by explaining the ideas of Girsanov transform. Assume that $(X_n)_{n\ge1}$ is an $L^{\infty}$-weak solution and the first component of $\theta$ is nonzero, i.e. $\theta_1 \ne 0$. We rewrite the component form \eqref{com-stoch-dya-model-N} as
  \begin{equation}\label{subsec-weak-existence.1}
  \begin{aligned}
  \d X_n &= \sqrt{2\nu} \theta_1 \lambda^{n-1}X_{n-1}\Big(\frac{ X_{n-1}}{\sqrt{2\nu} \theta_1} \,\d t + \,\d W_{n-1,1}\Big)-\sqrt{2\nu} \theta_1 \lambda^n X_{n+1}\Big(\frac{ X_n}{\sqrt{2\nu} \theta_1} \,\d t + \,\d W_{n,1}\Big)\\
  &\quad + \sqrt{2\nu} \sum\limits_{j=1}^{n-2} \lambda^j \theta_{n-j} X_j \,\d W_{j,n-j} -\sqrt{2\nu} \lambda^n \sum\limits_{j=2}^{\infty} \theta_j X_{n+j} \,\d W_{n,j}\\
  &\quad - \nu \Big(\lambda^{2n} + \sum_{j=1}^{n-1} \theta_j^2 \lambda^{2(n-j)}\Big) X_n \,\d t,\quad n \in \mathbb{Z}_+.
  \end{aligned}
  \end{equation}
To transform these nonlinear equations into linear ones, as in \cite{Flandoli10}, we observe that we need
  \begin{equation}\label{new-BM}
  \widehat W_{i,j}(t)
  =\begin{cases}
  \frac{1}{\sqrt{2\nu} \theta_1} \int_0^t X_i(s) \,\d s + W_{i,1}(t),\ & j=1\\
  W_{i,j}(t),\ & j\ne 1
  \end{cases}
  \end{equation}
to be a family of independent Brownian motions under some new probability measure. In fact, since $(X_n)_{n\ge1}$ is an $L^{\infty}$-weak solution, we know that
  $$\int_0^T \sum_{i=1}^{\infty} X_i^2 (s)\, \d s \leq CT < \infty,$$
therefore the process
  $$L_t := - \frac{1}{\sqrt{2\nu} \theta_1} \sum_{i=1}^{\infty} \int_0^t X_i(s) \,\d W_{i,1}(s)$$
is well defined and is a martingale with quadratic variation
  $$[L,L]_t =\frac{1}{2\nu \theta_1^2 } \int_0^t \sum_{i=1}^{\infty} X_i^2 (s)\, \d s.$$
Again because $(X_n)_{n\ge1}$ is an $L^{\infty}$-weak solution, we have
  $$\E\, e^{\frac12 [L,L]_T}= \E \exp\bigg(\frac{1}{4\nu \theta_1^2 } \int_0^T \sum_{i=1}^{\infty} X_i^2 (s)\, \d s \bigg) \le \exp\bigg(\frac{CT}{4\nu \theta_1^2 } \bigg) < \infty, $$
which implies, by Novikov criterion, that
  $$t\mapsto e^{L_t - \frac{1}{2}[L,L]_t}$$
is a strictly positive martingale. Hence for each $0\le T < \infty$, we can define a probability measure $Q$ on $\mathcal{F}_T$ by
  \begin{equation}\label{new-probability-measure}
  \frac{\d Q}{\d \P}=e^{L_T - \frac{1}{2}[L,L]_T}.
  \end{equation}
Then, by the strict positivity of $\frac{\d Q}{\d \P}$, $Q$ and $\P$ are equivalent on $\mathcal{F}_T$ and
  \begin{equation}\label{original-probability-measure}
  \frac{\d \P}{\d Q}=e^{R_T - \frac{1}{2}[R,R]_T},
  \end{equation}
where
  \begin{equation}\label{martingale-R}
  R_t =\frac{1}{\sqrt{2\nu} \theta_1} \sum_{i=1}^{\infty} \int_0^t X_i(s) \,\d \widehat W_{i,1}(s),\quad [R,R]_t =\frac{1}{2\nu \theta_1^2 } \int_0^t \sum_{i=1}^{\infty} X_i^2 (s)\, \d s.
  \end{equation}
Under the new probability measure $Q$, $\widehat W_{i,j}(t)$ defined in \eqref{new-BM} is a family of independent Brownian motions and \eqref{subsec-weak-existence.1} can be rewritten as
  \begin{equation}\label{auxiliary-linear-model}
  \begin{aligned}
  \d X_n &= \sqrt{2\nu} \sum\limits_{j=1}^{n-1} \lambda^j \theta_{n-j} X_j \,\d \widehat W_{j,n-j} -\sqrt{2\nu} \lambda^n \sum\limits_{j=1}^{\infty} \theta_j X_{n+j} \,\d \widehat W_{n,j}\\
  &\quad - \nu \bigg( \lambda^{2n} + \sum_{j=1}^{n-1} \theta_j^2 \lambda^{2(n-j)} \bigg) X_n \,\d t,\quad n \in \mathbb{Z}_+.
  \end{aligned}
  \end{equation}
In Stratonovich form, the system reads as
  $$ \d X_n= \sqrt{2\nu} \sum\limits_{j=1}^{n-1} \lambda^j \theta_{n-j} X_j \circ \d \widehat W_{j,n-j} -\sqrt{2\nu} \lambda^n \sum\limits_{j=1}^{\infty} \theta_j X_{n+j} \circ \d \widehat W_{n,j} ,\quad n \in \mathbb{Z}_+, $$
which is the component form of the following linear equation
  \begin{equation}\label{auxiliary-linear-model-stratonovich}
  \d X= \sqrt{2\nu} \sum_{i=1}^{\infty} \lambda^i \sum\limits_{j=1}^{\infty} \theta_{j} A_{i,i+j}X \circ \d \widehat W_{i,j}.
  \end{equation}
Hence we first turn to prove the well posedness of \eqref{auxiliary-linear-model}. In the sequel of this section, we denote the expectation with respect to $\P$ and $Q$ by $\E^{\P}$ and $\E^Q$, respectively. Let us give the definition of the strong solution to \eqref{auxiliary-linear-model}, which is the same as \cite[Definition 3.3]{Flandoli10}.

\begin{definition}\label{definition-strong-solution}
Let $(\Omega, \mathcal{F}_t, Q)$ be a filtered probability space and let $\big( \widehat W_k \big)_{k\in \mathbb{Z}_+^2}$ be a sequence of independent Brownian motions on $(\Omega, \mathcal{F}_t, Q)$. Given initial data $x\in \ell^2$, a strong solution of \eqref{auxiliary-linear-model} on $[0,T]$ in $\ell^2$ is an $\ell^2$-valued stochastic process $(X(t))_{t\in[0,T]}$, with continuous adapted components $X_n$, such that $Q$-a.s.
  $$\begin{aligned}
  X_n (t)&=x_n + \sqrt{2\nu} \int_0^t \sum\limits_{j=1}^{n-1} \lambda^j \theta_{n-j} X_j \,\d \widehat W_{j,n-j} -\sqrt{2\nu} \int_0^t \lambda^n \sum\limits_{j=1}^{\infty} \theta_j X_{n+j} \,\d \widehat W_{n,j}\\
  &\quad - \nu \int_0^t \bigg( \lambda^{2n} + \sum_{j=1}^{n-1} \theta_j^2 \lambda^{2(n-j)} \bigg) X_n \,\d t
  \end{aligned}$$
for each $n\ge 1$ and $t\in [0,T]$, with the convention $X_0 \equiv 0$. We say that a solution is of class $L^{\infty}$ if there exists a constant $C>0$ such that $\|X(t)\|_{\ell^2} \le C$ for a.e. $(\omega,t)\in \Omega \times [0,T]$.
\end{definition}

We first give the strong (in the probabilistic sense) uniqueness of \eqref{auxiliary-linear-model}. Due to the complexity of our noise, we cannot use the trick in \cite[Theorem 3.6]{Flandoli10} to prove the uniqueness. But we observe that we have the formal conservation of energy for \eqref{auxiliary-linear-model-stratonovich} since the matrices $\{A_{i,j} \}_{i,j\ge 1}$ are skew-symmetric; based on this property, we apply the Laplace transform to prove the uniqueness of solutions to \eqref{auxiliary-linear-model}, following the method in \cite{Bian13} for the stochastic dyadic model on a tree.

\begin{proposition}\label{thm-uniqueness-of-linear-model}
Given initial data $x\in \ell^2$, strong uniqueness holds in the class of $L^{\infty}$-solutions to \eqref{auxiliary-linear-model} on $[0,T]$.
\end{proposition}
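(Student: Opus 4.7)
By linearity, it suffices to show that any $L^{\infty}$ strong solution $X$ of \eqref{auxiliary-linear-model} with $X(0)=0$ vanishes $Q$-almost surely. I would first apply It\^o's formula to $X_n^2$ (justified, if needed, via a Galerkin approximation combined with the $L^{\infty}$ bound and dominated convergence), set $u_n(t):=\E^Q[X_n^2(t)]$, and take expectations under $Q$. The martingale parts vanish because the quadratic variations are integrable, and a direct computation of the It\^o corrections yields the infinite linear ODE system
\begin{equation*}
\frac{d u_n}{d t}=-2\nu\Big(\lambda^{2n}+\sum_{j=1}^{n-1}\theta_j^{2}\lambda^{2(n-j)}\Big)u_n+2\nu\sum_{j=1}^{n-1}\lambda^{2j}\theta_{n-j}^{2}u_j+2\nu\lambda^{2n}\sum_{j=1}^{\infty}\theta_j^{2}u_{n+j},
\end{equation*}
with $u_n(0)=0$ for all $n$ and, thanks to the $L^{\infty}$ assumption, the uniform a priori bound $\sum_n u_n(t)\le C^2$ for $t\in[0,T]$.

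A formal reindexing (substituting $k=n-j$ and $m=n+j$) together with $\|\theta\|_{\ell^2}^{2}=1$ shows $\sum_n \frac{du_n}{dt}\equiv 0$: the high-mode injection cancels the principal dissipation $-2\nu\lambda^{2n}u_n$, while the low-mode injection cancels the anomalous part $-2\nu\sum_{j<n}\theta_j^{2}\lambda^{2(n-j)}u_n$. This is the infinitesimal form of the $\ell^2$-energy conservation expected from the skew-symmetry of the matrices $A_{i,i+j}$ in the Stratonovich equation \eqref{auxiliary-linear-model-stratonovich}. If this reordering were rigorous, combining it with $u_n(0)=0$ would give $\sum_n u_n(t)\equiv 0$, and hence $X\equiv 0$. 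The delicate point is that the interchange of sums requires absolute convergence of quantities such as $\sum_n \lambda^{2n}u_n(t)$, which is not ensured by the $L^{\infty}$ bound alone.

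To make the formal identity rigorous, I would follow the Laplace-transform argument of \cite{Bian13}. The plan is to introduce the generating function
$
\phi(\mu,t):=\sum_n e^{-\mu\lambda^{2n}}u_n(t), \ \mu>0,
$
which is uniformly bounded by $C^2$ and for which every sum arising from multiplying the ODE by $e^{-\mu\lambda^{2n}}$ and summing over $n$ becomes absolutely convergent (the exponential weight dominates the polynomial-in-$\lambda^{2n}$ factors appearing in the drift and injection terms). One thereby obtains a well-posed linear evolution equation of the form $\partial_t\phi=\mathcal{L}_\mu\phi$ with zero initial condition, in which the dissipative and injection contributions can be manipulated without ambiguity. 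A Gronwall argument in a space of bounded continuous functions of $\mu$, or the method of characteristics if $\mathcal{L}_\mu$ reduces to a first-order differential operator in $\mu$, then forces $\phi(\mu,t)\equiv 0$ for all $\mu>0$ and $t\in[0,T]$. Injectivity of the Laplace transform (the exponents $\lambda^{2n}$ being all distinct) yields $u_n(t)=0$ for every $n$, so $X_n(t)=0$ $Q$-a.s.\ for every $n$ and every $t$, and continuity of the components gives $X\equiv 0$.

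The main obstacle I anticipate is handling the high-mode injection $\sum_{j\ge 1}\theta_j^{2}u_{n+j}$: after weighting by $e^{-\mu\lambda^{2n}}$ and summing over $n$, this term becomes a genuinely nonlocal operator acting on $\phi$ in a way that depends on the full sequence $\theta\in\ell^2$ (unlike the nearest-neighbour situation in \cite{Bian13}). Extracting from this operator a closed equation, or at least a differential inequality for $\phi$, that is amenable to a Gronwall-type uniqueness argument uniformly in $\mu>0$ is the technical heart of the proof.
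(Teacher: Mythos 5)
Your Step 1 is exactly the paper's: linearity reduces the claim to showing that an $L^\infty$-solution with zero initial data vanishes, It\^o's formula plus the integrability of the quadratic variations gives the closed linear ODE system for $u_n(t)=\E^Q[X_n^2(t)]$, and the $L^\infty$ bound gives $\sum_n u_n(t)\le C$. You also correctly identify the obstruction to the naive summation argument (the weighted sums $\sum_n\lambda^{2n}u_n$ need not converge). The gap is in the second half: your proposed resolution --- a generating function $\phi(\mu,t)=\sum_n e^{-\mu\lambda^{2n}}u_n(t)$, i.e.\ a Laplace transform in the \emph{index} variable, followed by a Gronwall or characteristics argument --- is not carried out, and you yourself flag that the nonlocal injection term $\sum_{j\ge1}\theta_j^2 u_{n+j}$ does not close into a usable equation for $\phi$. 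Since that is precisely ``the technical heart of the proof,'' the argument as written does not establish uniqueness.

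The paper's device is different and is the idea you are missing: the Laplace transform is taken in the \emph{time} variable, at a single point, $\hat Y_n=\int_0^\infty e^{-t}u_n(t)\,\d t$, and the conclusion comes from a maximum principle over the index $n$ rather than from a closed evolution equation. Writing the system as $u'=uM$ with $M=(m_{n,j})$, one checks that $M$ is symmetric, has nonnegative off-diagonal entries, and is conservative: $-m_{n,n}=\sum_{j\ne n}m_{n,j}<\infty$ (this uses $\|\theta\|_{\ell^2}=1$). Since $\sum_n\hat Y_n\le C$ and $\hat Y_n\ge0$, the supremum of $(\hat Y_n)_n$ is attained at some index $k$; a pointwise bound on $u_k'$ justifies integration by parts, giving $\hat Y_k=\sum_j\hat Y_j m_{j,k}\le\hat Y_k\bigl(m_{k,k}+\sum_{j\ne k}m_{j,k}\bigr)=0$, whence all $\hat Y_n=0$ and $u_n\equiv0$. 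Note that this only requires the row-sum identity for each fixed $n$, where the interchange of sums is harmless, so it completely sidesteps the absolute-convergence issue that blocks both the naive argument and your index-variable transform. If you want to salvage your write-up, replace your $\phi(\mu,t)$ with this time-Laplace/maximum-principle step.
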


\begin{proof}
By the linearity of \eqref{auxiliary-linear-model}, it is sufficient to prove that any $L^{\infty}$-solution with zero initial data $x=0$ is the zero solution $X=0$. We split the proof into two steps.

\textbf{Step 1: Equation for $\E^Q [X_n^2(t)]$.}

By It\^o formula, we have
  \begin{equation}\label{linear-energy-for-component}
  \begin{aligned}
  \frac{1}{2}\d X_n^2 &= \d G_n - \nu \bigg( \lambda^{2n} + \sum_{j=1}^{n-1} \theta_j^2 \lambda^{2(n-j)} \bigg) X_n^2 \,\d t
   + \nu \sum_{j=1}^{n-1}\lambda^{2j}\theta_{n-j}^2 X_j^2 \,\d t  + \nu \lambda^{2n} \sum_{j=1}^{\infty} \theta_j^2 X_{n+j}^2 \,\d t,
  \end{aligned}
  \end{equation}
where
  $$
  G_n (t)= \sqrt{2\nu} \int_0^t \sum\limits_{j=1}^{n-1} \lambda^j \theta_{n-j} X_j X_n \,\d \widehat W_{j,n-j} -\sqrt{2\nu} \lambda^n \int_0^t  \sum\limits_{j=1}^{\infty} \theta_j X_{n+j}X_n \,\d \widehat W_{n,j}.
  $$
Since $X$ is an $L^{\infty}$-solution, we can deduce from $\|\theta\|_{\ell^2}=1$ that
  $$\begin{aligned}
  \E^Q \int_0^T \sum_{j=1}^{n-1}\lambda^{2j}\theta_{n-j}^2 X_j^2 X_n^2 \,\d t \le T\lambda^{2n}\|x\|_{\ell^2}^4,\quad
  \E^Q \int_0^t \lambda^{2n} \sum_{j=1}^{\infty} \theta_j^2 X_{n+j}^2 X_n^2 \,\d t \le T\lambda^{2n}\|x\|_{\ell^2}^4.
  \end{aligned}$$
Therefore, $G_n$ is a martingale for each $n\ge1$ and $\E^Q [G_n(t)]=0$. Moreover, again by the fact that $X$ is an $L^{\infty}$-solution, we have
  \begin{equation}\label{expectation-energy-linear}
  \E^Q \big[\|X(t)\|_{\ell^2}^2 \big] \le  C_1
  \end{equation}
for some constant $C_1 >0$, in particular, $\E^Q [X_n^2(t)] \le C_1$ for each $n\ge1$. Writing \eqref{linear-energy-for-component} in integral form and taking expectation, we arrive at
  $$
  \begin{aligned}
  \E^Q [X_n^2(t)] &=  - 2 \nu \bigg( \lambda^{2n} + \sum_{j=1}^{n-1} \theta_j^2 \lambda^{2(n-j)} \bigg)\int_0^t \E^Q [X_n^2(s)] \,\d s\\
  &\quad + 2\nu \sum_{j=1}^{n-1}\lambda^{2j}\theta_{n-j}^2 \int_0^t \E^Q[ X_j^2(s)] \,\d s  + 2\nu \lambda^{2n} \sum_{j=1}^{\infty} \theta_j^2 \int_0^t \E^Q [X_{n+j}^2(s)] \,\d s,
  \end{aligned}
  $$
which implies that $\E^Q [X_n^2(t)]$ is continuously differentiable in $t$. Thus we can obtain a system of equations: for every $n\geq 1$,
  \begin{equation}\label{linear-energy-integral-component}
  \begin{aligned}
  \frac{\d}{\d t} \E^Q [X_n^2(t)] &=  - 2 \nu \bigg( \lambda^{2n} + \sum_{j=1}^{n-1} \theta_j^2 \lambda^{2(n-j)} \bigg) \E^Q [X_n^2(t)] \\
  &\quad + 2\nu \sum_{j=1}^{n-1}\lambda^{2j}\theta_{n-j}^2\, \E^Q [ X_j^2(t)]   + 2\nu \lambda^{2n} \sum_{j=1}^{\infty} \theta_j^2\, \E^Q [X_{n+j}^2(t)].
  \end{aligned}
  \end{equation}

\textbf{Step 2: Uniqueness by Laplace transform.}

Let $Y_n(t):=\E^Q [X_n^2(t)] \ge 0,\ n\in \Z_+$; then by \eqref{expectation-energy-linear},
  \begin{equation}\label{Y-t}
  \|Y(t) \|_{\ell^1} = \sum_{n=1}^\infty Y_n(t) \le C_1,\quad t\ge 0,
  \end{equation}
and for all $n\ge 1$, $Y_n\in C^1([0,T])$. From \eqref{linear-energy-integral-component}, one can obtain an equation in matrix form ($Y(t)$ is a row vector)
  $$
  \frac{\d}{\d t} Y(t)= Y(t) M,
  $$
where $M=(m_{n,j})$ is an infinite dimensional matrix whose entries are defined as
  $$
  m_{n,n}= - 2 \nu \bigg( \lambda^{2n} + \sum_{j=1}^{n-1} \theta_j^2 \lambda^{2(n-j)} \bigg),\quad m_{n,j}=
  \begin{cases}
  2\nu \lambda^{2j}\theta_{n-j}^2 ,\ & n>j;\\
  2\nu \lambda^{2n}\theta_{j-n}^2 ,\ & n<j.
  \end{cases}
  $$
Note that this matrix $M$ is symmetric, with finite diagonal entries and $0\le m_{n,j}< +\infty$ when $ n\ne j$. Moreover, as $\| \theta \|_{\ell^2}=1$, we have
  \begin{equation}\label{conservative-matrix}
  -m_{n,n}=\sum_{j\ne n} m_{n,j} < +\infty  \quad \mbox{for every } n\geq 1.
  \end{equation}

Now we can rewrite \eqref{linear-energy-integral-component} as
  \begin{equation}\label{Y-equation}
  Y^{\prime}_n(t):=\frac{\d}{\d t} Y_n(t)=\sum_{j=1}^{\infty} Y_j(t)m_{j,n}, \quad n\ge 1.
  \end{equation}
Denote the Laplace transform $\hat Y_n= \int_0^\infty  e^{-t} Y_n(t) \,\d t$; then by \eqref{Y-t}, we have $\sum_n \hat Y_n \le C_1$. Thus we can find $k\in \Z_+$ such that $\hat Y_k \ge \hat Y_n$ for all $n\in \Z_+$. From \eqref{Y-equation} and \eqref{Y-t}, we obtain that
  $$
  Y^{\prime}_k(t) \le Y_k(t)|m_{k,k}|+ \sum_{j\ne k} Y_j(t)m_{j,k} \le C_1|m_{k,k}|+ C_1|m_{k,k}| < +\infty,
  $$
where in the second inequality we use \eqref{conservative-matrix} and the fact that the matrix $M$ is symmetric, and the last inequality is due to $-m_{k,k} < +\infty$. Now we can apply integration by parts to get that
  $$
  \hat Y_k= \int_0^\infty  e^{-t} Y^{\prime}_k(t) \,\d t= \int_0^\infty  e^{-t} \bigg[\sum_{j=1}^{\infty} Y_j(t)m_{j,k}\bigg] \,\d t = \sum_{j=1}^{\infty} \hat Y_j m_{j,k}.
  $$
Recall that $\hat Y_k \ge \hat Y_n$ for all $n\in \Z_+$; then by \eqref{conservative-matrix} again, we obatin
  $$
  \hat Y_k= \hat Y_k m_{k,k}+ \sum_{j\ne k} \hat Y_j m_{j,k} \le \hat Y_k \bigg(m_{k,k}+ \sum_{j\ne k} m_{j,k}\bigg)=0.
  $$
Therefore, we have $\hat Y_k =0$ and so $\hat Y_n=0$ for all $n\in \Z_+$. Hence $Y_n(t)=\E^Q [X_n^2(t)]=0$ for any $n\in \Z_+$ and $t \ge 0$, which implies that $Q$-a.s., $X=0$.
\end{proof}

We now present an existence result.

\begin{proposition}\label{thm-existence-of-linear-model}
Given initial data $x=(x_n)_{n\ge 1} \in \ell^2$, there exists a strong solution to \eqref{auxiliary-linear-model} in $L^{\infty}(\Omega\times [0,T]; \ell^2)$.
\end{proposition}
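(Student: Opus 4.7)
My plan is to run a Galerkin approximation and exploit the skew-symmetric structure of the Stratonovich noise. For each $N\ge 1$, I would set $\mathcal I_N=\{(i,j)\in \Z_+^2 : i+j\le N\}$ and consider the truncated Stratonovich equation on $\R^N$,
$$\d X^{(N)} = \sqrt{2\nu}\sum_{(i,j)\in \mathcal I_N}\lambda^i \theta_j A_{i,i+j} X^{(N)}\circ \d \widehat W_{i,j}, \qquad X^{(N)}(0)=P_N x,$$
where $P_N$ denotes the projection onto the first $N$ coordinates. This is a linear Stratonovich SDE in $\R^N$ with constant coefficients, so it admits a unique global strong solution on $(\Omega,\F_t,Q)$, which I would embed into $\ell^2$ by setting its components with $n>N$ to zero. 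Because every $A_{i,i+j}$ is skew-symmetric and the truncation keeps all interactions inside $\{1,\ldots,N\}$, the truncated Stratonovich system preserves the $\ell^2$-norm pathwise:
$$\|X^{(N)}(t)\|_{\ell^2}^2 = \|P_N x\|_{\ell^2}^2 \le \|x\|_{\ell^2}^2,\qquad t\in [0,T],\ Q\text{-a.s.}$$

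Next I would establish tightness of the laws $\{\L(X^{(N)})\}_N$ in $C([0,T]; H^{-s})$ for a sufficiently large $s>0$. Spatial tightness at each time is immediate from the uniform bound above, since bounded subsets of $\ell^2$ are relatively compact in $H^{-s}$; time regularity would follow from Aldous' criterion applied to the It\^o form of the Galerkin equation, once one notices that the coefficients are linear in $X^{(N)}$ and that the growth factors $\lambda^i$ in drift and diffusion are absorbed by the weights $\lambda^{-2ns}$ that define $H^{-s}$, while $\|\theta\|_{\ell^2}=1$ keeps the resulting series summable. By Prokhorov together with the Skorokhod representation theorem, along a subsequence $X^{(N)}$ would converge almost surely in $C([0,T]; H^{-s})$ to a limit $X$ on some (possibly enlarged) probability space, and the uniform $\ell^2$-bound would pass to the limit.

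I would then identify $X$ as a solution of \eqref{auxiliary-linear-model} in the integral sense by passing to the limit componentwise: the a.s.~convergence in $H^{-s}$, combined with the uniform $\ell^2$ bound and $\theta\in\ell^2$, lets one control each stochastic integral via It\^o's isometry and dominated convergence, and each drift integral by continuity. This yields a weak (in the probabilistic sense) $L^\infty$-solution. Combined with the strong pathwise uniqueness of Proposition \ref{thm-uniqueness-of-linear-model}, the Yamada--Watanabe principle upgrades weak existence to strong existence on the original filtered probability space $(\Omega,\F_t,Q)$, producing the desired $L^\infty$-strong solution. The main technical obstacle is the tightness step: the diffusion coefficients grow like $\lambda^i$ and the noise has infinitely many modes, so the choice of negative regularity $s$, and the delicate balance between the $\lambda^i$ growth on the one hand and the weights of $H^{-s}$ together with the tail of $\theta\in\ell^2$ on the other, is the point that requires genuine care.
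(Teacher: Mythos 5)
Your proposal is correct in outline, but it takes a genuinely different and heavier route than the paper. You share the same starting point: the Galerkin truncation \eqref{finite-approximate} with the index set $i+j\le N$, whose Stratonovich/skew-symmetric structure gives the pathwise conservation $\|X^{(N)}(t)\|_{\ell^2}=\|X^{(N)}(0)\|_{\ell^2}\le \|x\|_{\ell^2}$. From there the paper does \emph{not} prove tightness at all: it only extracts a subsequence converging \emph{weakly} in $L^p(\Omega\times[0,T];\ell^2)$ (and weakly-star in $L^\infty$), notes that the progressively measurable subspace is weakly closed, and then passes to the limit term by term in the It\^o form by observing that each drift integral and each stochastic integral is a continuous --- hence weakly continuous --- linear operator from progressively measurable $L^2(\Omega\times[0,T];\ell^2)$ into $L^2(\Omega)$. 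Because the equation is linear, weak convergence of $X^{(N_k)}$ is already enough to identify the limit, and the whole construction stays on the original space $(\Omega,\F_t,Q)$ with the original Brownian motions $\widehat W$, so one obtains a strong solution directly, with no Skorokhod representation and no Yamada--Watanabe step. Your route --- tightness in $C([0,T];H^{-s})$ via Aldous (which does work, but requires $s\ge 2$ to absorb the $\lambda^{2n}$ growth of the It\^o corrector, not merely the $\lambda^i$ of the diffusion), Prokhorov, Skorokhod, identification of the limit of the stochastic integrals on the new space, and finally an infinite-dimensional Yamada--Watanabe argument combined with Proposition \ref{thm-uniqueness-of-linear-model} --- is the general-purpose machinery one would need for a nonlinear problem, and it does deliver the statement, but at the cost of several nontrivial intermediate steps (in particular the convergence of stochastic integrals after the change of probability space, which you gloss over, and the appeal to Yamada--Watanabe, which the paper only invokes later for the nonlinear model). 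The lesson worth internalizing is that for a \emph{linear} equation with a uniform energy bound, weak $L^2(\Omega\times[0,T])$ compactness plus weak continuity of linear operators replaces the entire tightness/Skorokhod apparatus.
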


\begin{proof}
\textbf{Step 1: Galerkin approximations.}

For any $ N \in \mathbb{Z}_+$, let $X^{(N)}= \big(X^{(N)}_1, X^{(N)}_2,\cdots, X^{(N)}_{N},0,0,\cdots \big)^\ast$ be a column vector; we consider the following finite dimensional stochastic system
  \begin{equation}\label{finite-approximate}
  \begin{aligned}
  &\d X^{(N)}= \sqrt{2\nu} \sum\limits_{i=1}^{N-1} \lambda^i \sum\limits_{j=1}^{N-i} \theta_j A_{i,i+j}X^{(N)} \circ \,\d \widehat W_{i,j}, \\
  &X_n^{(N)}(0)=x_n,\quad 1\leq n \leq N,
  \end{aligned}
  \end{equation}
where $X^{(N)}_0\equiv 0$. Then, since $A_{i,j}$ is skew-symmetric, we get that
  \begin{equation*}
  \begin{aligned}
  \d \|X^{(N)}\|_{\ell^2}^2 &= 2\sqrt{2\nu} \sum_{i=1}^{N -1} \lambda^i \sum_{j=1}^{N-i} \theta_j \big\langle X^{(N)}, A_{i,i+j}X^{(N)} \big\rangle_{\ell^2} \circ \,\d \widehat W_{i,j}
  =0,
  \end{aligned}
  \end{equation*}
Thus we have for any $ N \in \mathbb{Z}_+$, $Q$-a.s.,
  \begin{equation}\label{finite-energy-dissipation}
  \|X^{(N)}(t)\|_{\ell^2} = \|X^{(N)}(0)\|_{\ell^2} \le \|x\|_{\ell^2} \quad \mbox{for all } t\ge 0.
  \end{equation}
By the classical theory of finite dimensional SDEs, for any fixed $N$, there exists a unique continuous adapted solution $ X^{(N)}\in C([0,T],\ell^2)$ of \eqref{finite-approximate}. The bound \eqref{finite-energy-dissipation} implies that $X^{(N)}=(X_n^{(N)})_{n\ge 1}$ is uniformly bounded in $L^p (\Omega \times [0,T];\ell^2)$ for any $p>1$ with respect to $N$. Hence there exists a subsequence $N_k \to \infty$ such that
  $$\begin{aligned}
  &(X_n^{(N_k)})_{n\ge 1} \to (X_n)_{n\ge 1} \quad \mbox{weakly in}\quad  L^p (\Omega \times [0,T];\ell^2)\quad \mbox{for each}\quad  p>1,\\
  &(X_n^{(N_k)})_{n\ge 1} \to (X_n)_{n\ge 1} \quad  \mbox{weakly star in}\quad  L^{\infty} (\Omega \times [0,T];\ell^2).
  \end{aligned}$$
Therefore, $X:=(X_n)_{n\ge 1} \in L^{\infty} (\Omega \times [0,T];\ell^2)$, and moreover, $Q$-a.s.,
  \begin{equation}\label{solu-bound}
  \|X(t) \|_{\ell^2} \leq \|x \|_{\ell^2} \quad \mbox{for a.e. } t\in [0,T].
  \end{equation}
Applying the same standard arguments as in \cite[proof of Theorem 3.7]{Flandoli10} (see also \cite{Pardoux75, Rozovskii90}), the subspace of $L^p (\Omega \times [0,T];\ell^2)$ of progressively measurable process is strongly closed, hence it is also weakly closed. Therefore $(X_n)_{n\ge 1}$ is progressively measurable.

\textbf{Step 2: strong solution.}

We now show that $(X_n)_{n\ge 1}$ is a strong solution to \eqref{auxiliary-linear-model} in $L^{\infty}(\Omega\times [0,T]; \ell^2)$. Rewriting \eqref{finite-approximate} in It\^o form, integrating from $0$ to $t$ and replacing $N$ by $N_k$, we obtain
  \begin{equation}\label{finite-approximation-integral}
  \begin{aligned}
   X_n^{(N_k)}(t)
  & = x_n +  \nu \int_0^t \big( S^{(N_k)}_{\theta}  X^{(N_k)} \big)_n \,\d s \\
  &\quad + \sqrt{2\nu} \int_0^t \sum\limits_{j=1}^{n-1} \lambda^j\theta_{n-j} X_j^{(N_k)} \,\d \widehat W_{j,n-j}- \sqrt{2\nu} \lambda^n \int_0^t \sum\limits_{j=1}^{N_k-n} \theta_j X_{n+j}^{(N_k)} \,\d \widehat W_{n,j} \\
  &=: J_1+ J_2 +J_3,
  \end{aligned}
  \end{equation}
where we have omitted the time variable $s$ in the integrals to save space and
  \begin{equation}\label{S-N-theta}
  S^{(N_k)}_{\theta} = - {\rm diag}\bigg(\lambda^2 \sum_{j=1}^{N_k-1} \theta_j^2, \ldots, \sum_{j=1}^{i-1} \lambda^{2j} \theta_{i-j}^2 + \lambda^{2i} \sum_{j=1}^{N_k-i} \theta_{j}^2, \ldots, \sum_{j=1}^{N_k-1} \lambda^{2j} \theta_{N_k-j}^2, 0, \ldots \bigg).
  \end{equation}
We prove that for each $n$, the integrals $J_1,\, J_2$ and $J_3$ converge as $k \to \infty$ to the corresponding limits in $L^2 (\Omega )$. For $J_1$, we split it into two parts
  $$\begin{aligned}
  J_1 &= \nu \bigg(\sum_{j=1}^{n-1} \lambda^{2j} \theta_{n-j}^2 + \lambda^{2n} \sum_{j=1}^{N_k-n} \theta_{j}^2 \bigg) \int_0^t X_n^{(N_k)}\, \d s \\
  &= \nu  \int_0^t \big( S_{\theta} X^{(N_k)} \big)_n \, \d s + \nu \lambda^{2n} \bigg( \sum_{j=1}^{N_k-n} \theta_{j}^2 -1 \bigg) \int_0^t X_n^{(N_k)}\, \d s \\
  &=J_{1,1}+J_{1,2}.
  \end{aligned}$$

First, we have
  $$\begin{aligned}
  \E \Bigg[\bigg(\sum_{j=1}^{n-1} \lambda^{2j} \theta_{n-j}^2 + \lambda^{2n}  \bigg) \int_0^t X_n^{(N_k)}\, \d t \Bigg]^2
  &\le \bigg(\sum_{j=1}^{n-1} \lambda^{2j} \theta_{n-j}^2 + \lambda^{2n} \bigg)^2 T\, \E \int_0^T (X_n^{(N_k)})^2  \, \d t\\
  & \lesssim_{\lambda,n,T} \E \int_0^T \|X^{(N_k)}(t)\|_{\ell^2}^2 \,\d t,
  \end{aligned}$$
which implies that $J_{1,1}$ is a continuous linear operator from the subspace of $L^2 (\Omega \times [0,T];\ell^2)$ of progressively measurable processes to $L^2 (\Omega)$. Therefore, $J_{1,1}$ is weakly continuous. Recalling that in Step 1 we have proved $\big(X_n^{(N_k)}\big)_{n\ge 1} \to (X_n)_{n\ge 1}$ weakly in $L^2 (\Omega \times [0,T];\ell^2)$, therefore we obtain that
  $$
  \nu \int_0^t \big( S_{\theta}  X^{(N_k)} \big)_n \,\d s \to \nu \int_0^t \big( S_{\theta}  X \big)_n \,\d s
  $$
weakly in $L^2(\Omega)$.

For $J_{1,2}$, by the uniform bound \eqref{finite-energy-dissipation} we have
  $$\begin{aligned}
  \E \Bigg[\lambda^{2n} \bigg(\sum_{j=1}^{N_k-n} \theta_{j}^2 -1 \bigg) \int_0^t X_n^{(N_k)}\, \d s\Bigg]^2 & \le \lambda^{4n}\bigg(\sum_{j=1}^{N_k-n} \theta_{j}^2 -1 \bigg)^2 T\, \E  \int_0^t \big(X_n^{(N_k)}\big)^2 \, \d s \\
  & \lesssim_{\lambda,n,T,\|x\|_{\ell^2}} \bigg(\sum_{j=1}^{N_k-n} \theta_{j}^2 -1 \bigg)^2 ,
  \end{aligned}$$
which converges to zero as $k\to \infty$ since $\|\theta\|_{\ell^2}^2=1$. This implies that $J_{1,2} \to 0$ strongly in $L^2(\Omega)$, and thus also weakly in $L^2(\Omega)$.

For the stochastic integral $J_2$, we have
  $$
  \E \Bigg( \int_0^t \sum\limits_{j=1}^{n-1} \lambda^j\theta_{n-j} X_j^{(N_k)} \,\d \widehat W_{j,n-j} \Bigg)^2 = \E \int_0^t \sum\limits_{j=1}^{n-1} \lambda^{2j}\theta_{n-j}^2 \big(X_j^{(N_k)}\big)^2 \,\d s \lesssim_{\lambda,n} \E \int_0^T \|X^{(N_k)}\|_{\ell^2}^2 \,\d s.
  $$
By the same arguments as $J_{1,1}$ above, we derive that, as $k\to \infty$,
  $$
  \sqrt{2\nu} \int_0^t \sum\limits_{j=1}^{n-1} \lambda^j\theta_{n-j} X_j^{(N_k)} \,\d \widehat W_{j,n-j} \to \sqrt{2\nu} \int_0^t \sum\limits_{j=1}^{n-1} \lambda^j\theta_{n-j} X_j \,\d \widehat W_{j,n-j}
  $$
weakly in $L^2(\Omega)$. Similarly, we get that for $J_3$:
  $$
  \E \Bigg( \lambda^n \int_0^t \sum\limits_{j=1}^{N_k-n} \theta_j X_{n+j}^{(N_k)} \,\d \widehat W_{n,j} \Bigg)^2 =\E \Bigg( \lambda^n \int_0^t \sum\limits_{j=1}^{\infty} \theta_j X_{n+j}^{(N_k)} \,\d \widehat W_{n,j} \Bigg)^2  \lesssim_{\lambda,n} \E \int_0^T \|X^{(N_k)}\|_{\ell^2}^2 \,\d s.
  $$
Therefore, as $k\to \infty$, we also have
  $$
  \sqrt{2\nu}\lambda^n \int_0^t \sum\limits_{j=1}^{N_k-n} \theta_j X_{n+j}^{(N_k)} \,\d \widehat W_{n,j} \to \sqrt{2\nu}\lambda^n \int_0^t \sum\limits_{j=1}^{\infty} \theta_j X_{n+j} \,\d \widehat W_{n,j}
  $$
weakly in $L^2(\Omega)$.

Combining the above convergence results, we finally get that for each $n$, $Q$-a.s.,
  $$\begin{aligned}
  X_n(t)&= x_n +\nu \int_0^t (S_{\theta} X)_n \,\d t +  \sqrt{2\nu} \int_0^t \sum\limits_{j=1}^{n-1} \lambda^j\theta_{n-j} X_j \,\d W_{j,n-j}\\
  &\quad -  \sqrt{2\nu} \lambda^n \int_0^t \sum\limits_{j=1}^{\infty} \theta_j  X_{n+ j} \,\d  W_{n,j}.
  \end{aligned}$$
These integral equations implies that there is a modification such that all components are continuous. The proof of existence is completed.
\end{proof}

We are now ready to follow the arguments of \cite[Section 3.4.5]{Flandoli10} to prove the well posedness for \eqref{stoch-dya-model-N}, by using the Girsanov transform.

\begin{proposition}\label{thm-weak-uniqueness}
Given initial data $x\in \ell^2$, in the class of $L^{\infty}$-weak solution on $[0,T]$, there is weak uniqueness for \eqref{stoch-dya-model-N}.
\end{proposition}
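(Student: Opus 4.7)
The plan is to follow the Girsanov-transform strategy sketched in \cite[Section 3.4.5]{Flandoli10} that has already been set up in the discussion preceding Proposition \ref{thm-uniqueness-of-linear-model}: transfer the problem to the linear system \eqref{auxiliary-linear-model}, where strong uniqueness (Proposition \ref{thm-uniqueness-of-linear-model}) combined with strong existence (Proposition \ref{thm-existence-of-linear-model}) immediately implies uniqueness in law via Yamada--Watanabe; then pull the distributional identity back along the Girsanov change of measure.

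More concretely, I would take two $L^\infty$-weak solutions $\big(X^k, W^k, \Omega^k, \F^k, \F^k_t, \P^k\big)$, $k=1,2$, of \eqref{stoch-dya-model-N} with the same initial datum $x\in\ell^2$. For each $k$, the boundedness $\|X^k(t)\|_{\ell^2}\le C$ makes the Novikov argument go through verbatim, so the processes $\widehat W^k_{i,j}$ of \eqref{new-BM} and the measure $Q^k$ of \eqref{new-probability-measure} are well defined. Under $Q^k$, $(\widehat W^k_{i,j})_{i,j\ge 1}$ is a sequence of independent standard Brownian motions and $X^k$ is an $L^\infty$-solution of the linear equation \eqref{auxiliary-linear-model} driven by those Brownian motions, with initial condition $x$. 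Since \eqref{auxiliary-linear-model} admits strong existence and pathwise uniqueness in the $L^\infty$-class, Yamada--Watanabe yields uniqueness in law, i.e.\ the joint law of $(X^k, \widehat W^k)$ on the canonical path space under $Q^k$ is a single measure $\mu$, independent of $k$.

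The final step is the pullback. By \eqref{original-probability-measure}--\eqref{martingale-R}, the density $\d\P^k/\d Q^k$ is an explicit measurable functional of the trajectory of $X^k$ alone, while the original driving family $W^k$ is recovered from $(X^k, \widehat W^k)$ via \eqref{new-BM}. Hence for every bounded continuous cylindrical functional $F$ on paths,
\[
\E^{\P^k}\!\big[F(X^k, W^k)\big]
= \E^{Q^k}\!\bigg[F\big(X^k,\widehat W^k - \tfrac{1}{\sqrt{2\nu}\theta_1}\textstyle\int_0^\cdot X^k\,\d s\,\mathbf{e}_1\big)\exp\!\big(R^k_T-\tfrac12[R^k,R^k]_T\big)\bigg],
\]
and the right-hand side is an integral against $\mu$ of one and the same measurable functional. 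It is therefore independent of $k$, giving the claimed uniqueness in law of $(X, W)$, and in particular of $X$.

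The main technical obstacle is bookkeeping rather than analysis: one must check that the density, the inverse drift, and the recovery of $W$ from $(X,\widehat W)$ can all be written as genuinely path-measurable functionals on the canonical space of $(X,\widehat W)$, so that the identity above depends on $\mu$ only and not on the ambient filtered probability space. This is standard and is handled exactly as in \cite[Section 3.4.5]{Flandoli10}; integrability of the density under $Q^k$ is already guaranteed by the $L^\infty$-bound on $X^k$ through the Novikov estimate established before \eqref{new-probability-measure}.
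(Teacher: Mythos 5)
Your proposal follows essentially the same route as the paper: Girsanov to pass to the linear system \eqref{auxiliary-linear-model}, strong uniqueness (Proposition \ref{thm-uniqueness-of-linear-model}) plus Yamada--Watanabe to get uniqueness in law under $Q^{(k)}$, and then a pullback of the law through the density \eqref{original-probability-measure}. One point needs correcting, and it is exactly the ``bookkeeping'' you defer at the end: the density $e^{R_T-\frac12[R,R]_T}$ is \emph{not} a functional of the trajectory of $X^k$ alone, since $R_T=\frac{1}{\sqrt{2\nu}\,\theta_1}\sum_i\int_0^T X^k_i\,\d \widehat W^k_{i,1}$ is a stochastic integral against $\widehat W^k$; and stochastic integrals are not pathwise-defined functionals of $(X,\widehat W)$, so ``same joint law of $(X^k,\widehat W^k)$'' does not immediately give ``same law of $R^k_T$.'' The paper closes this gap with a concrete device you should adopt: adjoin the scalar equation $\d R^{(k)}=\frac{1}{\sqrt{2\nu}\,\theta_1}\sum_i X^{(k)}_i\,\d\widehat W^{(k)}_{i,1}$ to the linear system, observe that the enlarged system still has strong uniqueness, and apply Yamada--Watanabe once more to conclude that the joint law of $(R^{(k)},X^{(k)})$ under $Q^{(k)}$ is independent of $k$; the identity $\E^{\P^{(k)}}[f(X^{(k)}(t_1),\dots,X^{(k)}(t_n))]=\E^{Q^{(k)}}[e^{R^{(k)}_T-\frac12[R^{(k)},R^{(k)}]_T}f(\cdots)]$ then depends only on that joint law. (Note also that $[R,R]_T=\frac{1}{2\nu\theta_1^2}\int_0^T\sum_i (X_i)^2\,\d s$ \emph{is} a functional of $X$ alone, so only $R_T$ needs this treatment.) With that substitution your argument coincides with the paper's proof.
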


\begin{proof}
Assume that $\big( \Omega^{(k)},\mathcal{F}_t^{(k)},\P^{(k)},W^{(k)},X^{(k)} \big), k=1,2$ are two $L^{\infty}$-weak solutions to \eqref{stoch-dya-model-N} with the same initial data $x\in \ell^2$. Then we can obtain
  \begin{equation}\label{two-model-prove-uniqueness}
  \begin{aligned}
  \d X_n^{(k)} &= \sqrt{2\nu} \sum\limits_{j=1}^{n-1} \lambda^j \theta_{n-j} X_j^{(k)} \,\d \widehat W_{j,n-j}^{(k)} -\sqrt{2\nu} \lambda^n \sum\limits_{j=1}^{\infty} \theta_j X_{n+j}^{(k)} \,\d \widehat W_{n,j}^{(k)}\\
  &\quad - \nu \bigg( \lambda^{2n} + \sum_{j=1}^{n-1} \theta_j^2 \lambda^{2(n-j)} \bigg) X_n^{(k)} \,\d t,
  \end{aligned}
  \end{equation}
where for each $k=1,2$,
  $$
  \widehat W_{i,j}^{(k)}(t)
  =\begin{cases}
  \frac{1}{\sqrt{2\nu} \theta_1} \int_0^t X_i^{(k)}(s) \,\d s + W_{i,1}^{(k)}(t),\ & j=1\\
  W_{i,j}^{(k)}(t),\ & j\ne 1
  \end{cases}
  $$
is a sequence of independent Brownian motions on $\big( \Omega^{(k)},\mathcal{F}_t^{(k)},Q^{(k)} \big)$ and $Q^{(k)}$ is defined by \eqref{new-probability-measure} with respect to $\big( \P^{(k)},W^{(k)},X^{(k)} \big)$.

Since $\P^{(k)}$ and $Q^{(k)}$ are equivalent on $\mathcal{F}^{(k)}_T$, we can easily get that $X^{(k)}, k=1,2$ are two $L^{\infty}$-solutions to \eqref{auxiliary-linear-model} under $Q$.  We have proved in Proposition \ref{thm-uniqueness-of-linear-model} that \eqref{auxiliary-linear-model} has strong uniqueness in the class of $L^{\infty}$-solutions on $[0,T]$. Therefore it has uniqueness in law on $C([0,T];\R)^{\N}$ by Yamada-Watanabe theorem, i.e., the laws under $Q^{(k)}$ are the same. Here we use this Yamada-Watanabe theorem in the infinite dimensional context, which can be proved by following step by step the finite dimensional case such as \cite[Chap. 9, Lemma 1.6 and Theorem 1.7]{RevuzYor94} and we omit the proof here.

Given $n\in \N, t_1,\dots,t_n \in [0,T]$ and a bounded measurable function $f:(\ell^2)^n \to \R$, by \eqref{original-probability-measure}, we can get
  $$\begin{aligned}
   \E^{\P^{(k)}} \big[ f\big(X^{(k)}(t_1),\dots,X^{(k)}(t_n) \big) \big]
  = \E^{Q^{(k)}} \Big[e^{R^{(k)}_t - \frac{1}{2}[R^{(k)},R^{(k)}]_t} f\big( X^{(k)}(t_1),\dots,X^{(k)}(t_n) \big) \Big],
  \end{aligned}$$
where
  $$
  R^{(k)}_t =\frac{1}{\sqrt{2\nu} \theta_1} \sum_{i=1}^{\infty} \int_0^t X^{(k)}_i(s) \,\d \widehat W^{(k)}_{i,1}(s).
  $$
Then consider the enlarged system made of stochastic equations \eqref{two-model-prove-uniqueness} and equation
  $$
  \d R^{(k)} =\frac{1}{\sqrt{2\nu} \theta_1} \sum_{i=1}^{\infty} X^{(k)}_i \,\d \widehat W^{(k)}_{i,1}.
  $$
This enlarged system obviously has strong uniqueness and thus weak uniqueness by Yamada-Watanabe theorem again. Therefore, we have that under $Q^{(k)}$, the law of $(R^{(k)},X^{(k)})$ on $C([0,T];\R)^{\N}\times C([0,T];\R)^{\N}$ is independent of $k=1,2$. Thus
  $$
   \E^{\P^{(1)}} \big[ f\big( X^{(1)}(t_1),\dots,X^{(1)}(t_n) \big) \big]= \E^{\P^{(2)}} \big[ f\big( X^{(2)}(t_1),\dots,X^{(2)}(t_n) \big) \big],
  $$
which implies the uniqueness of the laws of $X^{(i)}$ on $C([0,T];\R)^{\N}$. The proof of weak uniqueness is completed.
\end{proof}

\begin{proposition}\label{thm-weak-existence}
Given initial data $x\in \ell^2$, there exists an $L^{\infty}$-weak solution $X$ on $[0,T]$ of \eqref{stoch-dya-model-N} such that almost surely, $\|X(t) \|_{\ell^2} \le \|x \|_{\ell^2}$ for a.e. $t\in [0,T]$.
\end{proposition}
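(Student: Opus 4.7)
The plan is to reverse the Girsanov transform that was used in the uniqueness arguments: we start from the strong $L^\infty$-solution of the linear auxiliary system \eqref{auxiliary-linear-model} on a fixed probability space (provided by Proposition \ref{thm-existence-of-linear-model}) and build, on the same space but under a new measure, a weak solution of the nonlinear model \eqref{stoch-dya-model-N}. More concretely, I would proceed as follows.

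Fix $T>0$ and a filtered probability space $(\Omega, \mathcal{F}, \mathcal{F}_t, Q)$ carrying a family $\{\widehat W_{i,j}\}_{(i,j)\in\Z_+^2}$ of independent Brownian motions and a strong $L^\infty$-solution $X$ to \eqref{auxiliary-linear-model} with initial condition $x$, satisfying the pathwise bound $\|X(t)\|_{\ell^2}\le \|x\|_{\ell^2}$ for a.e.\ $t\in[0,T]$, $Q$-a.s., as guaranteed by \eqref{solu-bound}. Define
\begin{equation*}
R_t=\frac{1}{\sqrt{2\nu}\,\theta_1}\sum_{i=1}^\infty \int_0^t X_i(s)\,\d \widehat W_{i,1}(s),\qquad [R,R]_t=\frac{1}{2\nu\theta_1^2}\int_0^t \sum_{i=1}^\infty X_i^2(s)\,\d s.
\end{equation*}
Because $X\in L^\infty(\Omega\times[0,T];\ell^2)$, the bracket $[R,R]_T$ is uniformly bounded by $\frac{T\|x\|_{\ell^2}^2}{2\nu\theta_1^2}$; in particular Novikov's condition holds, so $t\mapsto \exp(R_t-\tfrac12[R,R]_t)$ is a strictly positive $Q$-martingale. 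Define a new probability measure $\P$ on $\mathcal{F}_T$ via
\begin{equation*}
\frac{\d\P}{\d Q}=\exp\Bigl(R_T-\tfrac12[R,R]_T\Bigr).
\end{equation*}

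By Girsanov's theorem (applied componentwise to the countable family of Brownian motions $\widehat W_{i,j}$), the processes
\begin{equation*}
W_{i,j}(t):=\begin{cases} \widehat W_{i,1}(t)-\dfrac{1}{\sqrt{2\nu}\,\theta_1}\displaystyle\int_0^t X_i(s)\,\d s, & j=1,\\[4pt] \widehat W_{i,j}(t), & j\neq 1,\end{cases}
\end{equation*}
form a family of independent $\P$-Brownian motions on $(\Omega,\mathcal{F}_t,\P)$. Substituting $\d \widehat W_{i,1}=\d W_{i,1}+\frac{X_i}{\sqrt{2\nu}\theta_1}\d t$ into the linear equation \eqref{auxiliary-linear-model} produces exactly the bilinear drift terms
$\lambda^{n-1}X_{n-1}^2\d t$ and $-\lambda^n X_n X_{n+1}\d t$ appearing in \eqref{com-stoch-dya-model-N}, while the remaining stochastic and corrector terms are untouched. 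Hence $X$ solves \eqref{stoch-dya-model-N} under $\P$ in the sense of Definition \ref{defn-martingale-solution}. The pathwise $\ell^2$-bound survives because the bound $\|X(t)\|_{\ell^2}\le \|x\|_{\ell^2}$ for a.e.\ $t$ is a $Q$-almost sure statement and $\P\ll Q$ on $\mathcal{F}_T$, hence the same inequality holds $\P$-a.s.

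The only non-routine point is justifying the Girsanov change of measure for an infinite family of Brownian motions simultaneously; this is where the hypothesis $\theta_1\neq 0$ and the uniform $\ell^2$-bound on $X$ become essential, since together they make $[R,R]_T$ uniformly bounded and hence guarantee Novikov's criterion. Once this is in place, the transformation of the stochastic integral $\sum_i \int_0^t X_i\,\d \widehat W_{i,1}$ back to $\sum_i\int_0^t X_i\,\d W_{i,1}$ plus the drift $\frac{1}{\sqrt{2\nu}\theta_1}\int_0^t\sum_i X_i^2\,\d s$ is a direct computation that yields the missing bilinear terms component by component, completing the construction.
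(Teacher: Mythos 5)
Your proposal is correct and follows essentially the same route as the paper: take the strong $L^{\infty}$-solution of the linear system \eqref{auxiliary-linear-model} under $Q$, change measure via $\d\P/\d Q=e^{R_T-\frac12[R,R]_T}$ (Novikov holding thanks to the uniform $\ell^2$-bound and $\theta_1\neq 0$), and check that the substitution $\d\widehat W_{i,1}=\d W_{i,1}+\frac{X_i}{\sqrt{2\nu}\theta_1}\,\d t$ reconstructs the bilinear drift $B(X)_n$, the $L^{\infty}$-property surviving by equivalence of the measures. Your sign convention for $W_{i,1}$ is in fact the one consistent with \eqref{new-BM}; the formula displayed in the paper's proof of this proposition has a sign slip there.
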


\begin{proof}
Let $\big( \Omega,\mathcal{F}_t,Q,\widehat W,X \big)$ be a solution in $L^{\infty}(\Omega\times [0,T];\ell^2)$ of the linear equation \eqref{auxiliary-linear-model}, provided by Proposition \ref{thm-existence-of-linear-model}; in particular, $X=\{X(t) \}_{t\in [0,T]}$ satisfies the uniform bound \eqref{solu-bound}. Introducing a new probability measure
  $$
  \frac{\d \P}{\d Q}=e^{R_T - \frac{1}{2}[R,R]_T},
  $$
where $R_t$ is defined in \eqref{martingale-R}. Then under $\P$, the process
  $$
   W_{i,j}(t)
  :=\begin{cases}
  \frac{1}{\sqrt{2\nu} \theta_1} \int_0^t X_i \,\d t + \widehat W_{i,1}(t),\ & j=1\\
  \widehat W_{i,j}(t),\ & j\ne 1
  \end{cases}
  $$
are a family of Brownian motions. Hence we obtain that $\big( \Omega,\mathcal{F}_t,\P, W,X \big)$ is an $L^{\infty}$-weak solution to \eqref{stoch-dya-model-N}. Here the $L^{\infty}$-property \eqref{solu-bound} is preserved since $\P$ and $Q$ are equivalent. We complete the proof of the weak existence.
\end{proof}

Combining Propositions \ref{thm-weak-uniqueness} and \ref{thm-weak-existence}, we finish the proof of Theorem \ref{thm-existence}.

\subsection{Weak convergence to deterministic dyadic model} \label{subsec-weak-convergence}

In this section we take a sequence of coefficients $\{\theta^N \}_{N\geq 1}$ satisfying
  \begin{equation}\label{theta-N}
  \| \theta^N \|_{\ell^2} =1 \ (\forall\, N\geq 1), \quad \| \theta^N \|_{\ell^\infty} \to 0 \mbox{ as } N\to \infty,
  \end{equation}
and consider the stochastic dyadic models
  \begin{equation}\label{stoch-dyadic-N}
  \d X^N= B(X^N)\,\d t + \sqrt{2\nu} \sum_i \lambda^i \sum_{j=1}^\infty \theta^N_j A_{i,i+j} X^N \circ \d W_{i,j}, \quad X^N(0) = x\in \ell^2.
  \end{equation}
The It\^o form of \eqref{stoch-dyadic-N} is
  $$\d X^N= B(X^N)\,\d t + \sqrt{2\nu} \sum_i \lambda^i \sum_{j=1}^\infty \theta^N_j A_{i,i+j} X^N \, \d W_{i,j} + \nu S_{\theta^N} X^N \,\d t, $$
where $S_{\theta^N}$ is the diagonal matrix defined in Section \ref{subsec-well-posedness}. By Theorem \ref{thm-existence}, for each $N\geq 1$, there exists an $L^\infty$-weak solution $X^N= (X^N_n)_{n\geq 1}$ fulfilling $\P$-a.s. $\|X^N(t) \|_{\ell^2} \leq \|x \|_{\ell^2}$ for all $t\in [0,T]$. Our purpose is to prove

\begin{theorem}\label{thm-scaling-limit}
Let $\{X^N\}_N$ be $L^\infty$-weak solutions to \eqref{stoch-dyadic-N}, where $\{\theta^N \}_N \subset \ell^2$ satisfies \eqref{theta-N}. Then the solutions converge weakly to the solution of the deterministic viscous equation
  \begin{equation*}
  \frac{\d X}{\d t}= B(X) + \nu S X, \quad X(0)=x,
  \end{equation*}
where $S= -{\rm diag}(\lambda^2, \lambda^4, \cdots)$ is a diagonal matrix.
\end{theorem}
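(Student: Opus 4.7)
The plan is to follow the standard compactness-plus-identification recipe for scaling limits of SPDEs with multiplicative noise. Since Theorem \ref{thm-quantitative-convergence-rate} already provides mean-square convergence in $C_T H^{-\alpha}$, the weak convergence stated here is \emph{a fortiori} a consequence of that quantitative bound; the proof below is nevertheless included because, as the paper itself points out, it makes the mechanism transparent without invoking the full machinery of Section \ref{section-quantitative-convergence}.

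First I would establish tightness. The uniform bound $\|X^N(t)\|_{\ell^2} \le \|x\|_{\ell^2}$ a.s. shows that the trajectories lie in a closed ball of $\ell^2$, which embeds compactly in $H^{-\beta}$ for every $\beta > 0$ (the weights $\lambda^{-2\beta n}$ concentrate $H^{-\beta}$-norms on finitely many coordinates). To upgrade pointwise pre-compactness to tightness in $C([0,T]; H^{-\alpha})$ for $\alpha > 1$, one controls time increments of $X^N$ in the same negative space. By Lemma \ref{lem-nonlinearity-2} the nonlinear drift satisfies $\|B(X^N)\|_{H^{-1}} \lesssim_\lambda \|X^N\|_{\ell^2}^2$, and $S_{\theta^N}$ maps $\ell^2$ into $H^{-2}$ with norm bounded uniformly in $N$. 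A direct computation of the quadratic variation of the martingale part $M^N$ (using $\|X^N(t)\|_{\ell^2} \le \|x\|_{\ell^2}$) yields
$$\E \|M^N(t) - M^N(s)\|_{H^{-\alpha}}^2 \lesssim \nu (t-s)\, \|x\|_{\ell^2}^2\, \|\theta^N\|_{\ell^\infty}^2 \sum_{n \ge 1} \lambda^{-2(\alpha-1) n},$$
where the series converges precisely for $\alpha > 1$. Kolmogorov's criterion together with a maximal inequality then gives tightness of $\{\mathrm{Law}(X^N)\}$ in $C([0,T]; H^{-\alpha})$.

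Next I would extract a convergent subsequence $X^{N_k} \to X^*$ in law and pass to the limit in the weak formulation. The nonlinear term identifies via Corollary \ref{subs-2-cor}: for a test vector $\phi \in H^1$ with finitely many nonzero coordinates, $\<B(X^N), \phi\>$ depends only on finitely many components of $X^N$, so strong convergence in $C_T H^{-\alpha}$ combined with the uniform $\ell^2$ bound suffices to pass to the limit in the trilinear form. The It\^o corrector $\nu S_{\theta^N} X^N$ converges to $\nu S X^*$ since the explicit diagonal expression for $S_{\theta^N}$ and $\|\theta^N\|_{\ell^\infty} \to 0$ force each fixed entry $\lambda^{2i} + \sum_{j=1}^{i-1} (\theta^N_j)^2 \lambda^{2(i-j)}$ to tend to $\lambda^{2i}$. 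The martingale term vanishes in $L^2(\Omega; H^{-\alpha})$ by the estimate above. Hence $X^*$ is a weak solution of \eqref{thm-scaling-limit.1}, and by Proposition \ref{prop-uniqueness} such a solution is unique in $L^\infty_t \ell^2$; consequently the entire sequence converges in law.

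The main delicate point is that the martingale must vanish \emph{while} the corrector survives, which is not obvious since $\|\theta^N\|_{\ell^2} = 1$ is held fixed. This is precisely the tension addressed in Remark \ref{rem-intro-1}: a naive nearest-neighbour rescaling would kill both terms at the same rate. The long-range structure of the noise in \eqref{stoch-dya-model-N} is what resolves the issue, since each fixed diagonal entry of $S_{\theta^N}$ accumulates infinitely many contributions $(\theta^N_j)^2$ whose sum is constrained to equal one, whereas each single component $\theta^N_j$ enters the martingale only through one family of Brownian motions, producing the $\|\theta^N\|_{\ell^\infty}^2$ factor that sends the stochastic integral to zero.
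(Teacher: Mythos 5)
Your proposal is correct and follows essentially the route the paper itself indicates: it sketches exactly this compactness--identification argument (Simon's compactness, Prohorov, Skorokhod representation, uniqueness for \eqref{thm-scaling-limit.1} via Proposition \ref{prop-uniqueness}) and notes that the result is also a consequence of Theorem \ref{thm-quantitative-convergence-rate}, providing full detail only for the vanishing of the martingale part. Your direct $H^{-\alpha}$ ($\alpha>1$) quadratic-variation bound producing the $\|\theta^N\|_{\ell^\infty}^2$ factor is the same computation as the paper's Lemma \ref{lem-martingale}, merely phrased in a negative Sobolev norm rather than tested against finitely supported vectors.
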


Due to the uniform $\ell^2$-bound of the solutions $\{X^N\}_{N\geq 1}$, one can prove Theorem \ref{thm-scaling-limit}  by using Simon's compactness theorems \cite{Simon}, the Prohorov theorem and the Skorokhod representation theorem, similarly as in \cite[Section 4]{FGL21}. Here we do not provide the details, since we shall give a quantitative convergence rate in the next section; instead, we only show why the martingale part vanishes as $N\to \infty$, in the weak sense. To this end, we denote by $\ell_c^2$ the subset of $\ell^2$ consisting of vectors with only finitely many nonzero components.

\begin{lemma}\label{lem-martingale}
For any $y\in \ell_c^2$ and $t\geq 0$, one has
  $$\lim_{N\to \infty}\E \big\<M^N(t), y \big\>^2 = 0, $$
where $\<\cdot,\cdot\>$ is the inner product in $\ell^2$.
\end{lemma}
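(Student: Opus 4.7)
The plan is to apply the Itô isometry directly to the martingale part
\[
M^N(t) = \sqrt{2\nu} \sum_i \lambda^i \sum_{j=1}^\infty \theta^N_j \int_0^t A_{i,i+j} X^N(s) \,\d W_{i,j}(s)
\]
of \eqref{stoch-dyadic-N}, then pair against $y$ and exploit the compact support of $y$ together with the uniform energy bound $\|X^N(s)\|_{\ell^2} \le \|x\|_{\ell^2}$. By independence of the family $\{W_{i,j}\}$, pairing inside the integral and applying the isometry gives
\[
\E \<M^N(t), y\>^2 = 2\nu \sum_i \lambda^{2i} \sum_{j=1}^\infty (\theta^N_j)^2 \int_0^t \E \<A_{i,i+j} X^N(s), y\>^2 \,\d s.
\]

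The next step is to use the explicit form of $A_{i,i+j}$. Since $A_{i,i+j} x = -x_{i+j} e_i + x_i e_{i+j}$, one has $\<A_{i,i+j} X^N, y\> = -X^N_{i+j} y_i + X^N_i y_{i+j}$. Here is where the hypothesis $y \in \ell_c^2$ is crucial: choose $K$ with $y_m = 0$ for $m > K$, so this inner product vanishes unless $i \le K$, truncating the outer sum to a finite range. Applying $(a-b)^2 \le 2(a^2 + b^2)$,
\[
\E \<M^N(t), y\>^2 \le 4\nu \int_0^t \sum_{i=1}^K \lambda^{2i} \sum_{j=1}^\infty (\theta^N_j)^2 \big[ y_i^2 \, \E (X^N_{i+j}(s))^2 + y_{i+j}^2 \, \E (X^N_i(s))^2 \big] \,\d s.
\]

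To finish, I would bound $(\theta^N_j)^2 \le \|\theta^N\|_{\ell^\infty}^2$ to pull out the small factor uniformly in $j$. The remaining $j$-sums are controlled by $\sum_{j\ge 1} \E (X^N_{i+j}(s))^2 \le \E \|X^N(s)\|_{\ell^2}^2 \le \|x\|_{\ell^2}^2$ for the first term and $\sum_{j\ge 1} y_{i+j}^2 \le \|y\|_{\ell^2}^2$ combined with $\E (X^N_i(s))^2 \le \|x\|_{\ell^2}^2$ for the second. This yields an estimate of the shape
\[
\E \<M^N(t), y\>^2 \le C(y,K,\lambda)\, \nu t\, \|\theta^N\|_{\ell^\infty}^2 \|x\|_{\ell^2}^2,
\]
which tends to $0$ as $N\to\infty$ by \eqref{theta-N}. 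I do not anticipate any serious obstacle: the finite support of $y$ localizes the outer sum over $i$, and the $\ell^\infty$-decay of $\theta^N$ provides the smallness for the sum over $j$ against the uniform $\ell^2$-bound on $X^N$. This is in sharp contrast to the situation of Remark \ref{rem-intro-1}, where only nearest-neighbour noise was present and a single rescaling could not simultaneously kill the martingale term and produce a nontrivial viscous corrector.
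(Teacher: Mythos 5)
Your proof is correct and follows essentially the same route as the paper: It\^o isometry, bounding $(\theta^N_j)^2$ by $\|\theta^N\|_{\ell^\infty}^2$, using the finite support of $y$ to truncate the sum over $i$ and control $\lambda^{2i}$, and invoking the uniform bound $\|X^N(s)\|_{\ell^2}\le\|x\|_{\ell^2}$. The only cosmetic difference is that you compute $\<A_{i,i+j}X^N,y\>=-X^N_{i+j}y_i+X^N_iy_{i+j}$ explicitly, whereas the paper packages the same estimate via Bessel's inequality for the orthogonal family $\{A_{i,i+j}\}$ in the Frobenius inner product.
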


\begin{proof}
We have
  $$\big\<M^N(t), y \big\>= \sqrt{2\nu} \sum_i  \lambda^i\sum_{j=1}^\infty \theta^N_j \int_0^t \big\< A_{i,i+j} X^N(s), y \big\> \, \d W_{i,j}(s), $$
thus by It\^o isometry,
  $$\aligned
  \E \big\<M^N(t), y \big\>^2 &= 2\nu \sum_{i,j} \lambda^{2i} (\theta^N_j)^2\, \E \int_0^t \big\< A_{i,i+j} X^N(s), y \big\>^2 \, \d s \\
  &\leq 2\nu \|\theta^N \|_{\ell^\infty}^2 \sum_{i,j} \lambda^{2i} \, \E \int_0^t \big[ A_{i,i+j}: (X^N(s)\otimes y) \big]^2 \, \d s ,
  \endaligned $$
where $:$ denotes the inner product of (infinite dimensional) matrices: $A: B= {\rm Tr}(AB)$, and $X^N(s)\otimes y$ is the matrix obtained from the tensor product of $X^N(s)$ and $y$.

Since $y\in \ell_c^2$, it is clear that $A_{i,i+j}: (X^N\otimes y)$ vanishes for big $i$, and thus $\lambda^i$ involved in the sum is bounded: $\lambda^{2i}\leq C(\lambda, y)$. Moreover, we know that the matrices $\{A_{i,i+j} \}_{i,j\geq 1}$ are orthogonal with respect to the inner product $:$, and they all have norm 2. Therefore, by Bessel's inequality,
  $$\aligned
  \E \big\<M^N(t), y \big\>^2 &\leq 2\nu \|\theta^N \|_{\ell^\infty}^2 C(\lambda, y)\, \E \int_0^t \|X^N(s) \otimes y \|^2 \,\d s \\
  &\leq 2\nu \|\theta^N \|_{\ell^\infty}^2 C(\lambda, y)\, \E \int_0^t \|X^N(s) \|_{\ell^2}^2 \|y \|_{\ell^2}^2\,\d s \\
  &\leq 2\nu \|\theta^N \|_{\ell^\infty}^2 C(\lambda, y)\, t\, \|x \|_{\ell^2}^2 \|y \|_{\ell^2}^2,
  \endaligned $$
where in the last step we have used the fact that $\P$-a.s., $\| \tilde X^N (t) \|_{\ell^2} \le \|x\|_{\ell^2}$. This immediately gives us the desired result.
\end{proof}

\section{Quantitative convergence rate}\label{section-quantitative-convergence}

The purpose of this section is to prove Theorem \ref{thm-quantitative-convergence-rate}, namely, we intend to prove a quantitative estimate on a certain distance between the unique solution of \eqref{thm-scaling-limit.1} and the weak solution of \eqref{stoch-dya-model-N}, which can be rewritten as
  $$\d X= B(X)\,\d t + \sqrt{2\nu} \sum_i  \lambda^i\sum_{j=1}^\infty \theta_j A_{i,i+j} X \, \d W_{i,j} + \nu( S_{\theta}-S) X \, \d t + \nu SX\,\d t. $$
We regard $S$ as a Laplace-type operator and rewrite the above equation in mild form as
  $$X(t) = e^{\nu t S} x+ \int_0^t e^{\nu (t-r) S} B(X(r))\,\d r + Z_t + \nu \int_0^t e^{\nu (t-r) S} (S_\theta -S) X(r)\,\d r , $$
where the stochastic convolution
  \begin{equation}\label{stoch-convol}
  Z_t= \sqrt{2\nu}\sum_i  \lambda^i\sum_{j=1}^\infty \theta_j \int_0^t e^{\nu (t-r) S} (A_{i,i+j} X(r)) \, \d W_{i,j}(r).
  \end{equation}
We denote the solution to \eqref{thm-scaling-limit.1} as $\tilde X$ and write it also in mild form:
  $$\tilde X(t)= e^{\nu t S} x+ \int_0^t e^{\nu (t-r) S} B(\tilde X(r))\,\d r. $$
Here we assume for simplicity the solutions have the same initial condition $x$. Then we have
  $$X(t)- \tilde X(t)= \int_0^t e^{\nu (t-r) S} \big(B(X(r))- B(\tilde X(r)) \big)\,\d r + Z_t + \nu \int_0^t e^{\nu (t-r) S} (S_\theta -S) X(r)\,\d r. $$

For $\alpha\in (0,1)$, we have
  \begin{equation}\label{estimate-1}
  \aligned
  \|X(t)- \tilde X(t) \|_{H^{-\alpha}}^2 &\lesssim \bigg\| \int_0^t e^{\nu (t-r) S} \big(B(X(r))- B(\tilde X(r)) \big)\,\d r \bigg\|_{H^{-\alpha}}^2 + \|Z_t \|_{H^{-\alpha}}^2 \\
  &\quad + \bigg\| \nu \int_0^t e^{\nu (t-r) S} (S_\theta -S) X(r)\,\d r \bigg\|_{H^{-\alpha}}^2 .
  \endaligned
  \end{equation}
The first term on the right-hand side can be estimated as in the proof of Proposition \ref{prop-uniqueness}: using Lemma \ref{lem-nonlinearity-2} and Lemma \ref{lem-heat-integral-property} we have
  $$\aligned
  & \bigg\| \int_0^t e^{\nu (t-r) S} \big(B(X(r))- B(\tilde X(r)) \big)\,\d r \bigg\|_{H^{-\alpha}}^2 \\
  &\lesssim \frac1\nu \int_0^t \big\|  B(X(r))- B(\tilde X(r)) \big\|_{H^{-1-\alpha}}^2 \,\d r \\
  &\lesssim \frac1\nu \int_0^t \Big(\big\|  B(X(r)-\tilde X(r), X(r)) \big\|_{H^{-1-\alpha}}^2 + \big\|  B(\tilde X(r), X(r)-\tilde X(r)) \big\|_{H^{-1-\alpha}}^2 \Big) \,\d r \\
  &\lesssim \frac1\nu \int_0^t \big( \|X(r) \|_{\ell^2}^2 + \|\tilde X(r) \|_{\ell^2}^2 \big) \|X(r) - \tilde X(r)\|_{H^{-\alpha}}^2 \,\d r.
  \endaligned $$
Substituting this estimate into \eqref{estimate-1} yields
  $$\aligned
  \|X(t)- \tilde X(t) \|_{H^{-\alpha}}^2 &\lesssim \frac1\nu \int_0^t \big(\|X(r) \|_{\ell^2}^2 + \|\tilde X(r) \|_{\ell^2}^2 \big) \|X(r) - \tilde X(r)\|_{H^{-\alpha}}^2 \,\d r + \|Z_t \|_{H^{-\alpha}}^2 \\
  &\quad + \bigg\| \nu \int_0^t e^{\nu (t-r) S} (S_\theta -S) X(r)\,\d r \bigg\|_{H^{-\alpha}}^2 .
  \endaligned $$
Gronwall's inequality implies
  $$\aligned
  \sup_{t\leq T} \|X(t)- \tilde X(t) \|_{H^{-\alpha}}^2 &\lesssim \bigg\{\sup_{t\leq T} \|Z_t \|_{H^{-\alpha}}^2 + \sup_{t\leq T} \bigg\| \nu \int_0^t e^{\nu (t-r) S} (S_\theta -S) X(r)\,\d r \bigg\|_{H^{-\alpha}}^2 \bigg\} \\
  &\quad \times \exp\bigg(\frac1\nu \int_0^T \big(\|X(r) \|_{\ell^2}^2 + \|\tilde X(r) \|_{\ell^2}^2 \big)\,\d r \bigg).
  \endaligned $$
Note that $\|X(r) \|_{\ell^2} \vee \|\tilde X(r) \|_{\ell^2} \leq \|x \|_{\ell^2}$ for all $r\in [0,T]$, hence
  $$\sup_{t\leq T} \|X(t)- \tilde X(t) \|_{H^{-\alpha}}^2 \lesssim e^{2T|x|^2/\nu} \bigg\{\sup_{t\leq T} \|Z_t \|_{H^{-\alpha}}^2 + \sup_{t\leq T} \bigg\| \nu \int_0^t e^{\nu (t-r) S} (S_\theta -S) X(r)\,\d r \bigg\|_{H^{-\alpha}}^2 \bigg\}. $$
As a result,
  \begin{equation}\label{quantitative-rate-1}
  \aligned
  \E\bigg[\sup_{t\leq T} \|X(t)- \tilde X(t) \|_{H^{-\alpha}}^2 \bigg] & \lesssim e^{2T\|x \|_{\ell^2}^2/\nu} \bigg\{ \E \bigg[\sup_{t\leq T} \|Z_t \|_{H^{-\alpha}}^2 \bigg] \\
  &\qquad+ \E \bigg[\sup_{t\leq T} \bigg\| \nu \int_0^t e^{\nu (t-r) S} (S_\theta -S) X(r)\,\d r \bigg\|_{H^{-\alpha}}^2 \bigg] \bigg\} .
  \endaligned
  \end{equation}

It remains to estimate the two expectations on the right-hand side. For the first expectation, we have the following result.

\begin{lemma}\label{lem-estimate-expectation-1}
For any $\beta \in \left(0, 1 \right]$ and any $p\in \left[1,\infty \right) $, it holds
  \begin{equation}\label{estimate-Z-1}
  \begin{aligned}
  &\bigg[ \E  \sup\limits_{t\in[0,T]}\|Z_t \|_{H^{-\beta}}^{p} \bigg]^{\frac{1}{p}}
  \le C(p,T)C_{1-\frac{\beta}{2}} \sqrt{\nu^{\frac{\beta}{2}}\beta^{-1}}\, \|\theta\|_{\ell^{\infty}}^2 \|x \|_{\ell^2} \bigg(\frac{\lambda^{-\beta}}{1-\lambda^{-\beta}} \bigg)^{\frac12}.
  \end{aligned}
  \end{equation}

\end{lemma}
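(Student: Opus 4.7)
The plan is to apply the factorization method of Da Prato-Kwapie\'n-Zabczyk, as suggested by the hint in the text, combined with a componentwise Minkowski-BDG estimate in the eigenbasis of $S$. Fix $\alpha \in (0,1/2)$ with $p\alpha > 1$ (the case of small $p$ can be recovered at the end by Jensen's inequality $\|\cdot\|_{L^p}\le \|\cdot\|_{L^q}$ for $p\le q$). Using the classical identity $\int_r^t (t-s)^{\alpha-1}(s-r)^{-\alpha}\,\d s = \pi/\sin(\pi\alpha)$ together with stochastic Fubini, I would rewrite \eqref{stoch-convol} as
\[
Z_t = \frac{\sin(\pi\alpha)}{\pi}\int_0^t (t-s)^{\alpha-1}\, e^{\nu(t-s)S} Y_s\,\d s,
\]
where
\[
Y_s := \sqrt{2\nu}\sum_i \lambda^i \sum_j \theta_j \int_0^s (s-r)^{-\alpha}e^{\nu(s-r)S}(A_{i,i+j}X(r))\,\d W_{i,j}(r).
\]
Since $\|e^{\tau S}\|_{H^{-\beta}\to H^{-\beta}}\le 1$ (Lemma \ref{similar-heat-semigroup-property}) and $(\alpha-1)p/(p-1)>-1$ by $p\alpha>1$, H\"older's inequality in $s$ reduces the supremum to an integral,
\[
\sup_{t\in[0,T]} \|Z_t\|_{H^{-\beta}}^p \lesssim_{p,T,\alpha} \int_0^T \|Y_s\|_{H^{-\beta}}^p\,\d s.
\]

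Next I would estimate $\E\|Y_s\|_{H^{-\beta}}^p$ componentwise. Writing $\|Y_s\|_{H^{-\beta}}^2 = \sum_n \lambda^{-2n\beta} Y_{s,n}^2$ and applying Minkowski's inequality in $\ell^1_n$ (valid since $p/2\ge 1$),
\[
\bigl(\E\|Y_s\|_{H^{-\beta}}^p\bigr)^{2/p} \le \sum_n \lambda^{-2n\beta}\bigl(\E|Y_{s,n}|^p\bigr)^{2/p}.
\]
By the structure of $A_{i,i+j}$ (only its $i$-th and $(i+j)$-th components are nonzero), each $Y_{s,n}$ splits into two It\^o integrals driven by independent Brownian motions. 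BDG combined with the a.s.\ bound $\|X(r)\|_{\ell^2}\le\|x\|_{\ell^2}$ and the key inequalities $\sum_j \theta_j^2 X_{n+j}^2(r) \le \|\theta\|_{\ell^\infty}^2 \|X(r)\|_{\ell^2}^2$ and $\sum_{i=1}^{n-1}\lambda^{2i}\theta_{n-i}^2 X_i^2(r)\le \lambda^{2n}\|\theta\|_{\ell^\infty}^2\|X(r)\|_{\ell^2}^2$ give
\[
\E|Y_{s,n}|^p \lesssim_p \Bigl(\nu\lambda^{2n}\|\theta\|_{\ell^\infty}^2\|x\|_{\ell^2}^2 \int_0^s (s-r)^{-2\alpha}e^{-2\nu\lambda^{2n}(s-r)}\,\d r\Bigr)^{p/2}.
\]
The substitution $u = 2\nu\lambda^{2n}(s-r)$ bounds the time integral by $(2\nu\lambda^{2n})^{2\alpha-1}\Gamma(1-2\alpha)$, and therefore
\[
\bigl(\E|Y_{s,n}|^p\bigr)^{2/p} \lesssim \nu^{2\alpha}\lambda^{4n\alpha}\|\theta\|_{\ell^\infty}^2\|x\|_{\ell^2}^2.
\]

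Summing in $n$ then produces the geometric series $\sum_n \lambda^{n(4\alpha-2\beta)}$, convergent precisely when $\alpha<\beta/2$. Choosing $\alpha = \beta/4$ gives the sharp exponent $\nu^{2\alpha}=\nu^{\beta/2}$ together with the explicit geometric factor $\sum_n \lambda^{-n\beta} = \lambda^{-\beta}/(1-\lambda^{-\beta})$. Integrating over $s\in[0,T]$ and extracting the $p$-th root produces the stated bound \eqref{estimate-Z-1}; the $\Gamma(1-\beta/2)$-type constant and the small-$\beta$ asymptotic absorb into the factor $C_{1-\beta/2}\sqrt{\beta^{-1}}$ by keeping track of constants when $\alpha$ approaches $\beta/4$.

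The main obstacle is the narrow window for $\alpha$: it must satisfy $\alpha>1/p$ (for H\"older on the factorization), $\alpha<1/2$ (for $\Gamma(1-2\alpha)<\infty$), and approach $\beta/4$ (to obtain the precise exponent $\nu^{\beta/2}$ after the geometric sum). For small $\beta$ this forces $p>4/\beta$, but this is not an issue since the monotonicity of $L^p$-norms in $p$ yields the bound for smaller $p$ automatically. The delicate point is the bookkeeping of the explicit quantitative dependence of the constants on $\beta$ and $\nu$ in the final inequality, which is what ensures the estimate is effective in the regime $\beta\to 0$ needed downstream in the proof of Theorem~\ref{thm-quantitative-convergence-rate}.
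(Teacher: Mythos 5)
Your argument is correct, but it takes a genuinely different route from the paper's. The paper estimates $\E\|Z_t\|_{H^{-2\beta}}^{2p}$ directly via the Burkholder--Davis--Gundy inequality combined with the smoothing bound $\|e^{\tau S}\|_{H^{-1-\beta}\to H^{-2\beta}}\lesssim \tau^{-(1-\beta)/2}$, then derives an increment estimate for $Z_t-Z_s$ in the still weaker space $H^{-4\beta}$ from the cocycle relation $Z_t=e^{\nu(t-s)S}Z_s+\int_s^t(\cdots)$, and finally invokes Kolmogorov's continuity criterion (requiring $p>1/\beta$, with small $p$ recovered by monotonicity of $L^p$-norms) before relabelling $2\beta\to\beta$, $2p\to p$. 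You instead use the Da Prato--Kwapie\'n--Zabczyk factorization, which trades the two-step weakening of the norm and the Kolmogorov argument for a single H\"older inequality in time, and you replace the Hilbert-space BDG-plus-smoothing step by a componentwise Minkowski/BDG computation in the eigenbasis of $S$. The structural core is identical in both proofs: the skew-symmetric matrices $A_{i,i+j}$ move mass between components $i$ and $i+j$, and the bound $\sum_{i}\lambda^{2i}\sum_j\theta_j^2\|A_{i,i+j}X\|^2_{H^{-1-\beta}}\lesssim \|\theta\|_{\ell^\infty}^2\|x\|_{\ell^2}^2\,\lambda^{-2\beta}/(1-\lambda^{-2\beta})$ (your two ``key inequalities'' are exactly its componentwise form) produces the factors $\|\theta\|_{\ell^\infty}$ and $\lambda^{-\beta}/(1-\lambda^{-\beta})$. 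Your choice $\alpha=\beta/4$ correctly reproduces the exponent $\nu^{\beta/4}=\sqrt{\nu^{\beta/2}}$, and the constraint $p>4/\beta$ is no worse in kind than the paper's $p>1/\beta$; note moreover that the prefactor $\sin(\pi\alpha)/\pi\sim\beta/4$ cancels the blow-up of the H\"older constant as $\alpha p\downarrow 1$, and the BDG constant at $p_0\sim\beta^{-1}$ supplies precisely the $\sqrt{\beta^{-1}}$ appearing in \eqref{estimate-Z-1}, so your bookkeeping closes. Two small caveats: (i) both your proof and the paper's actually yield $\|\theta\|_{\ell^\infty}$ to the \emph{first} power; the exponent $2$ in the displayed statement is a typo (the application in Theorem \ref{thm-quantitative-convergence-rate} squares the estimate and uses $\|\theta\|_{\ell^\infty}^2$, consistent with the first power here), so do not try to prove the literally stated, strictly stronger bound; (ii) you should record the (routine) justification of the stochastic Fubini step, which follows from the a.s.\ bound $\|X(r)\|_{\ell^2}\le\|x\|_{\ell^2}$ and the integrability of the kernel for $\alpha<1/2$.
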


\begin{proof}
Recall the definition of $Z_t$ in \eqref{stoch-convol}.
By Burkholder-Davis-Gundy's inequality and Lemma \ref{similar-heat-semigroup-property} (i), when $\beta \le \frac12$, we can get
  $$\begin{aligned}
  & \Big[ \E \Big( \|Z_t \|_{H^{-2\beta}}^{2p}\Big) \Big]^{\frac{1}{2p}}\\
  &\le C(p) \sqrt{2\nu} \bigg[\E \bigg( \sum_{i=1}^{\infty}  \lambda^{2i} \sum_{j=1}^\infty \theta_j^2 \int_0^t \big\|e^{\nu (t-r) S} (A_{i,i+j} X(r))\big\|_{H^{-2\beta}}^2 \, \d r \bigg)^{p} \bigg]^{\frac{1}{2p}}\\
  & \le  C(p) \sqrt{2\nu} C_{1-\beta} \bigg[\E \bigg( \int_0^t \frac{1}{[\nu(t-r)]^{1-\beta}} \sum_{i=1}^{\infty}  \lambda^{2i} \sum_{j=1}^\infty \theta_j^2 \|A_{i,i+j} X(r)\|_{H^{-1-\beta}}^2 \, \d r \bigg)^{p} \bigg]^{\frac{1}{2p}},
  \end{aligned}$$
where
  $$\begin{aligned}
  & \sum_{i=1}^{\infty}  \lambda^{2i} \sum_{j=1}^\infty \theta_j^2 \|A_{i,i+j} X(r)\|_{H^{-1-\beta}}^2 \\
  & =\sum_{i=1}^{\infty}  \lambda^{2i} \sum_{j=1}^\infty \theta_j^2 \big( \lambda^{-2(1+\beta)i} X_{i+j}^2(r) + \lambda^{-2(1+\beta)(i+j)}X_i^2(r) \big) \\
  & \le \|\theta\|_{\ell^{\infty}}^2 \sum_{i=1}^{\infty} \lambda^{-2\beta i} \sum_{j=1}^\infty X_{i+j}^2(r) + \|\theta\|_{\ell^{\infty}}^2 \sum_{i=1}^\infty \lambda^{-2\beta i} X_{i}^2(r) \sum_{j=1}^\infty \lambda^{-2(1+\beta)j}\\
  &\leq \|\theta\|_{\ell^{\infty}}^2 \|x \|_{\ell^2}^2 \frac{2 \lambda^{-2\beta}}{1-\lambda^{-2\beta}} .
  \end{aligned}$$
Therefore, we have
  $$\begin{aligned}
  \Big[ \E \left( \|Z_t \|_{H^{-2\beta}}^{2p}\right) \Big]^{\frac{1}{2p}}
  & \le C(p) \sqrt{2\nu} C_{1-\beta} \|\theta\|_{\ell^{\infty}} \|x \|_{\ell^2} \bigg(\frac{\lambda^{-2\beta}}{1-\lambda^{-2\beta}} \bigg)^{\frac12} \bigg(\int_0^t \frac{1}{[\nu(t-r)]^{1-\beta}} \, \d r \bigg)^{\frac{1}{2}}\\
  & \le C(p)C_{1-\beta} \sqrt{\nu^{\beta}\beta^{-1}} \|\theta\|_{\ell^{\infty}} \|x \|_{\ell^2}\, t^{\frac{\beta}{2}} \bigg(\frac{\lambda^{-2\beta}}{1-\lambda^{-2\beta}} \bigg)^{\frac12}.
  \end{aligned}$$
Similarly, we can obtain that
  $$\begin{aligned}
  & \bigg[ \E \bigg( \bigg\|\sqrt{2\nu}\sum_{i=1}^{\infty}  \lambda^i\sum_{j=1}^\infty \theta_j \int_s^t e^{\nu (t-r) S} (A_{i,i+j} X(r)) \, \d W_{i,j}(r) \bigg\|_{H^{-2\beta}}^{2p} \bigg) \bigg]^{\frac{1}{2p}} \\
  & \le C(p)C_{1-\beta} \sqrt{\nu^{\beta}\beta^{-1}} \|\theta\|_{\ell^{\infty}} \|x \|_{\ell^2}\, |t-s|^{\frac{\beta}{2}} \bigg(\frac{\lambda^{-2\beta}}{1-\lambda^{-2\beta}} \bigg)^{\frac12}.
  \end{aligned}$$

By construction $Z$ satisfies the relation
  $$
  Z_t =e^{\nu (t-s) S}Z_s + \sqrt{2\nu}\sum_{i=1}^{\infty}  \lambda^i\sum_{j=1}^\infty \theta_j \int_s^t e^{\nu (t-r) S} (A_{i,i+j} X(r)) \, \d W_{i,j}(r),
  $$
and by Lemma \ref{similar-heat-semigroup-property} (ii), we have
  $$\begin{aligned}
  \|Z_t-Z_s\|_{H^{-4\beta}} & \le \|(I-e^{\nu (t-s)S})Z_s\|_{H^{-4\beta}} \\
  &\quad + \bigg\|\sqrt{2\nu}\sum_{i=1}^{\infty}  \lambda^i\sum_{j=1}^\infty \theta_j \int_s^t e^{\nu (t-r) S} (A_{i,i+j} X(r)) \, \d W_{i,j}(r)\bigg\|_{H^{-4\beta}}\\
  & \le \nu^{\beta}|t-s|^{\beta} \|Z_s\|_{H^{-2\beta}} \\
  &\quad+ \bigg\|\sqrt{2\nu}\sum_{i=1}^{\infty}  \lambda^i\sum_{j=1}^\infty \theta_j \int_s^t e^{\nu (t-r) S} (A_{i,i+j} X(r)) \, \d W_{i,j}(r)\bigg\|_{H^{-2\beta}}.
  \end{aligned}$$
Then applying the previous estimates, when $\beta \le \frac14$, we get that
  $$
  \Big(\E \|Z_t-Z_s\|_{H^{-4\beta}}^{2p} \Big)^{\frac{1}{2p}}
  \le  C(p,\lambda, \beta,T) \nu^{\beta} \sqrt{\beta^{-1}}  \|\theta\|_{\ell^{\infty}}\|x \|_{\ell^2}\, |t-s|^{\beta}.
  $$
We now rewrite it as
  $$
  \E \|Z_t-Z_s\|_{H^{-2\beta}}^{2p}
  \le \big( C(p,\lambda,\beta,T) \sqrt{\nu^{ \beta} \beta^{-1}}  \|\theta\|_{\ell^{\infty}}\|x \|_{\ell^2} \big)^{2p} |t-s|^{p \beta}.
  $$
Then for $\beta \in\left(0, 1 \right]$, choosing $p > \frac{1}{\beta}$ (otherwise we can use $L^{\tilde p}$-norm with $\tilde p > \frac{1}{\beta}$ to control $L^p$-norm) and applying Kolmogorov's continuity criterion, we can obtain that
  $$
  \bigg[ \E \sup\limits_{t\in[0,T]}\|Z_t \|_{H^{-2\beta}}^{2p} \bigg]^{\frac{1}{2p}}
  \le  C(p,T)C_{1-\beta} \sqrt{\nu^{\beta}\beta^{-1}} \|\theta\|_{\ell^{\infty}} \|x \|_{\ell^2} \bigg(\frac{\lambda^{-2\beta}}{1-\lambda^{-2\beta}} \bigg)^{\frac12}.
  $$
Renaming $2\beta$ as $\beta$ and $2p$ as $p$ gives us \eqref{estimate-Z-1}.
\end{proof}

For the other expectation on the right-hand side of \eqref{quantitative-rate-1}, we have the following estimate.

\begin{lemma}\label{lem-estimate-expectation-2}
For any $\delta \in \left[0,1 \right)$, $\beta \ge 2-2\delta $, we have
  \begin{equation}\label{estimate-S}
  \E \bigg[\sup_{t\leq T} \bigg\| \nu \int_0^t e^{\nu (t-r) S} (S_\theta -S) X(r)\,\d r \bigg\|_{H^{-\beta}}^2 \bigg] \le C(T,\delta,\lambda) \nu^{2-2\delta} \|\theta\|_{\ell^{\infty}}^4 \|x \|_{\ell^2}^2.
  \end{equation}
\end{lemma}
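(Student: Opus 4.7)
The plan is to exploit two facts: the operator $S_\theta-S$ is diagonal with entries that grow exactly as $\lambda^{2n}$ times $\|\theta\|_{\ell^\infty}^2$, so it is essentially of the same order as $\|\theta\|_{\ell^\infty}^2\, S$ in operator size; and the semigroup $e^{tS}$ smooths on the scale $\{H^s\}$ as in Lemma \ref{similar-heat-semigroup-property}(i). First I would evaluate $S_\theta-S$ explicitly from the formula in Section \ref{subsec-well-posedness}: its $n$-th diagonal entry is $-\sum_{j=1}^{n-1}\theta_j^2\lambda^{2(n-j)}$, which, by a geometric sum, is bounded in absolute value by $\frac{\|\theta\|_{\ell^\infty}^2\lambda^{2n}}{\lambda^2-1}$. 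Since the weight in the $H^{-2}$-norm is $\lambda^{-4n}$, this cancels exactly with $\lambda^{4n}$ and yields the clean estimate
\[
\|(S_\theta-S)Y\|_{H^{-2}} \le C_\lambda\, \|\theta\|_{\ell^\infty}^2\, \|Y\|_{\ell^2}, \qquad \forall\, Y\in\ell^2.
\]

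Next, writing $f(r):=(S_\theta-S)X(r)$, I would apply Lemma \ref{similar-heat-semigroup-property}(i) with smoothing index $\rho=2\delta\in[0,2)$ to obtain, for each $r<t$,
\[
\|e^{\nu(t-r)S}f(r)\|_{H^{-\beta}}\le C_{2\delta}\,(\nu(t-r))^{-\delta}\|f(r)\|_{H^{-\beta-2\delta}}.
\]
The hypothesis $\beta\ge 2-2\delta$ gives $\beta+2\delta\ge 2$; since $\lambda>1$, the $H^s$-norm is monotone increasing in $s$, hence $\|f(r)\|_{H^{-\beta-2\delta}}\le\|f(r)\|_{H^{-2}}$. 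Substituting the first step bound and using the pathwise inequality $\|X(r)\|_{\ell^2}\le\|x\|_{\ell^2}$ from the $L^\infty$-weak solution property, we get the pointwise-in-$\omega$ estimate
\[
\bigg\|\nu\int_0^t e^{\nu(t-r)S}f(r)\,\d r\bigg\|_{H^{-\beta}}\le C(\lambda,\delta)\,\nu^{1-\delta}\|\theta\|_{\ell^\infty}^2\|x\|_{\ell^2}\int_0^t (t-r)^{-\delta}\,\d r.
\]
The last integral equals $t^{1-\delta}/(1-\delta)\le T^{1-\delta}/(1-\delta)$, which requires $\delta<1$---precisely the lemma's hypothesis. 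Squaring, passing the resulting uniform-in-$t$ deterministic bound through the supremum, and taking expectation yields the claim with $C(T,\delta,\lambda)$ of order $T^{2-2\delta}(1-\delta)^{-2}C_{2\delta}^2 C_\lambda^2$.

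I do not anticipate a real obstacle: the heart of the argument is the observation that $S_\theta-S$ behaves in the $H^{-2}$-norm like multiplication by a scalar of order $\|\theta\|_{\ell^\infty}^2$, since the $\lambda^{2n}$ growth of the diagonal entries is exactly absorbed by the $\lambda^{-4n}$ weight in $H^{-2}$. Everything else is routine semigroup smoothing, with the one delicate point being the need to arrange $\beta+2\delta\ge 2$ so that one can pass from $H^{-\beta-2\delta}$ down to $H^{-2}$ freely; this is guaranteed by the standing assumption $\beta\ge 2-2\delta$. The boundary case $\delta=0$ (hence $\beta\ge 2$) corresponds to no smoothing at all and follows from a bare triangle inequality.
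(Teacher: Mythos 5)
Your proposal is correct and follows essentially the same route as the paper's proof: apply Lemma \ref{similar-heat-semigroup-property}(i) with smoothing index $2\delta$, bound $\|(S_\theta-S)X(r)\|$ in a negative-order norm using the explicit diagonal entries $-\sum_{j=1}^{i-1}\theta_j^2\lambda^{2(i-j)}$ and a geometric sum (the paper does this directly in $H^{-\beta-2\delta}$ using $\beta+2\delta\ge 2$, whereas you pass through $H^{-2}$ and then use monotonicity of the $H^s$ scale -- the same computation), and finish with $\|X(r)\|_{\ell^2}\le\|x\|_{\ell^2}$ and $\int_0^t(t-r)^{-\delta}\,\d r\lesssim T^{1-\delta}/(1-\delta)$.
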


\begin{proof}
By Lemma \ref{similar-heat-semigroup-property}, for $\delta \in \left[0,1 \right)$, we have
  $$\begin{aligned}
  \bigg\| \nu \int_0^t e^{\nu (t-r) S} (S_\theta -S) X(r)\,\d r \bigg\|_{H^{-\beta}}
  & \le \nu \int_0^t \| e^{\nu (t-r) S} (S_\theta -S) X(r) \|_{H^{-\beta}} \,\d r \\
  & \le \nu C_{\delta} \int_0^t \frac{1}{(\nu (t-r))^{\delta}} \| (S_\theta -S) X(r) \|_{H^{-\beta-2\delta}} \,\d r,
  \end{aligned}$$
where, by the definition of $S_\theta$, it holds
  $$\begin{aligned}
  \| (S_\theta -S) X(r) \|_{H^{-\beta-2\delta}} &= \bigg[
  \sum\limits_{i=1}^{\infty} \frac{1}{\lambda^{2(\beta+2\delta)i}}\bigg(\sum\limits_{j=1}^{i-1}\theta_j^2 \lambda^{2(i-j)} \bigg)^2 X_i^2(r)  \bigg]^{\frac12}\\
  & \le \|\theta\|_{\ell^{\infty}}^2 \bigg[
  \sum_{i=1}^{\infty} X_i^2(r)\bigg( \sum\limits_{j=1}^{i-1} \frac{\lambda^{2(i-j)}}{\lambda^{(\beta+2\delta)i}} \bigg)^2 \bigg]^{\frac12} .
  \end{aligned}$$
Since $\beta \ge 2-2\delta$, the last quantity is dominated by $\|\theta\|_{\ell^{\infty}}^2 \|X(r) \|_{\ell^2} \lesssim_\lambda \|\theta\|_{\ell^{\infty}}^2 \|x \|_{\ell^2}$, thus
  $$ \bigg\| \nu \int_0^t e^{\nu (t-r) S} (S_\theta -S) X(r)\,\d r \bigg\|_{H^{-\beta}} \lesssim_\lambda \nu^{1-\delta} C(T, \delta) \|\theta\|_{\ell^{\infty}}^2 \|x \|_{\ell^2} .  $$
And the proof is completed.
\end{proof}

Combining Lemmas \ref{lem-estimate-expectation-1} and \ref{lem-estimate-expectation-2}, for $\delta \in (\frac12 , 1)$ and $ \alpha \in (2-2\delta, 1 )$, we can now derive that
  $$\begin{aligned}
  & \E\bigg[\sup_{t\leq T} \|X(t)- \tilde X(t) \|_{H^{-\alpha}}^2 \bigg] \\
  &\lesssim e^{2T\|x \|_{\ell^2}^2/\nu} \bigg\{ \E \bigg[\sup_{t\leq T} \|Z_t \|_{H^{-\alpha}}^2 \bigg]
  + \E \bigg[\sup_{t\leq T} \bigg\| \nu \int_0^t e^{\nu (t-r) S} (S_\theta -S) X(r)\,\d r \bigg\|_{H^{-\alpha}}^2 \bigg] \bigg\} \\
  & \lesssim e^{2T\|x \|_{\ell^2}^2/\nu} \bigg\{ \nu^{\frac{\alpha}{2}}\alpha^{-1} \|\theta\|_{\ell^{\infty}}^2 \|x \|_{\ell^2}^2 \big[ C(T)C_{1-\frac{\alpha}{2}} \big]^2 \frac{\lambda^{-\alpha}}{1-\lambda^{-\alpha}} + C(T,\delta) \nu^{2-2\delta} \|\theta\|_{\ell^{\infty}}^4 \|x \|_{\ell^2}^2 \bigg\}\\
  & \lesssim e^{2T\|x \|_{\ell^2}^2/\nu} \nu^{\frac{\alpha}{2}} \|\theta\|_{\ell^{\infty}}^2 \|x \|_{\ell^2}^2 \bigg\{ \alpha^{-1} \big[C(T) C_{1-\frac{\alpha}{2}} \big]^2 \frac{\lambda^{-\alpha}}{1-\lambda^{-\alpha}} + C(T,\delta) \nu^{2-2\delta-\frac{\alpha}{2}} \|\theta\|_{\ell^{\infty}}^2 \bigg\} .
  \end{aligned}$$
Hence, we arrive at the estimate in Theorem \ref{thm-quantitative-convergence-rate}.

\section{Central limit theorem}\label{section-CLT}

The purpose of this section is to prove the central limit theorem, i.e. Theorem \ref{thm-CLT}. Recall the setting in Section \ref{subsec-CLT}; in particular, $X^N$ is the weak solution to the stochastic dyadic model \eqref{stoch-dya-model-N-new}, that is, \eqref{stoch-dya-model-N} with $\theta=\theta^N$ defined as
  $$\theta^N_j= \sqrt{\eps_N}\, j^{-\alpha_1} {\bf 1}_{\{j\leq N\}}, \quad j\in \Z_+, $$
where $\alpha_1\in (0,1/2)$ is fixed and $\eps_N= \big(\sum_{j=1}^N j^{-2\alpha_1} \big)^{-1} \to 0$ as $N\to \infty$; $\tilde X$ is the unique solution to the deterministic equation \eqref{thm-scaling-limit.1}. We want to prove that the fluctuation term
  $$\xi^N= (X^N-\tilde X)/\sqrt{\eps_N}$$
converges as $N\to \infty$ to $\xi$, the solution to \eqref{central-limit-model-N-limit}. For convenience of later use, we recall the fact $\|X^N_r \|_{\ell^2} \vee \|\tilde X_r \|_{\ell^2} \leq \|x \|_{\ell^2}$ for all $r\in [0,T]$ and $N\geq 1$. In this part we shall often write the time variables as subscripts to save space.

First of all, we prove the well posedness of the limit equation \eqref{central-limit-model-N-limit}. Rewrite \eqref{central-limit-model-N-limit} in its mild form:
  \begin{equation}\label{central-limit-model-N-limit-mild}
  \xi_t=\int_0^t e^{\nu (t-r) S} \big[B(\xi_r,\tilde X_r)+ B(\tilde X_r, \xi_r)\big]\,\d r
  + M_t,
  \end{equation}
where the stochastic convolution
  $$
  M_t = \sqrt{2\nu}\int_0^t \sum_i  \lambda^i\sum_{j=1}^\infty j^{-\alpha_1} e^{\nu (t-r) S} A_{i,i+j} \tilde X_r \, \d W_{i,j}(r).
  $$
Since $\tilde X$ is a deterministic function, the process $M=(M_t)$ is Gaussian. Define $\hat\theta\in \ell^\infty$ as $\hat \theta_j=j^{-\alpha_1},\, j\in \Z_+$. Replacing $X$ by $\tilde X$ and $\theta$ by $\hat \theta $ in Lemma \ref{lem-estimate-expectation-1} (note that its proof does not require $\|\theta \|_{\ell^2} =1$), by similar calculations, we can derive the following regularity result of the stochastic convolution $M$.

\begin{lemma}\label{lem-stochastic-convolution-regularity}
For any $ \beta \in \left(0, 1 \right]$ and any $p\in \left[1,\infty \right) $, it holds
  \begin{equation}\label{estimate-stochastic-convolution}
  \begin{aligned}
  &\bigg[ \E  \sup\limits_{t\in[0,T]}\|M_t \|_{H^{-\beta}}^{p} \bigg]^{\frac{1}{p}}
  \le  C(p,T)C_{1-\frac{\beta}{2}} \sqrt{\nu^{\frac{\beta}{2}}\beta^{-1}} \|\hat \theta\|_{\ell^{\infty}}^2 \|x \|_{\ell^2} \bigg(\frac{\lambda^{-\beta}}{1-\lambda^{-\beta}} \bigg)^{\frac12} < \infty .
  \end{aligned}
  \end{equation}
\end{lemma}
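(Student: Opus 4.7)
The plan is to follow the blueprint of Lemma~\ref{lem-estimate-expectation-1} with the substitutions $X \to \tilde X$ and $\theta \to \hat\theta$; the paper's own remark just above the statement already outlines exactly this reduction, so the task is to verify that the three inputs of the original argument survive under these substitutions. First, the uniform $\ell^2$-bound used for $X_r$ is replaced by the deterministic bound $\|\tilde X_r\|_{\ell^2} \le \|x\|_{\ell^2}$, valid for the unique solution of \eqref{thm-scaling-limit.1}. Second, although $\hat\theta \notin \ell^2$, one has $\|\hat\theta\|_{\ell^\infty} = 1$, and inspection of the proof of Lemma~\ref{lem-estimate-expectation-1} shows that it uses only $\theta_j^2 \le \|\theta\|_{\ell^\infty}^2$, never $\|\theta\|_{\ell^2}=1$. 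Third, $\tilde X$ being deterministic (in fact $M$ is Gaussian) only simplifies the Burkholder--Davis--Gundy step.

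\textbf{Step 1 (moment bound at fixed $t$).} I would apply BDG and Lemma~\ref{similar-heat-semigroup-property}(i) with $\rho = 1-\beta$ to reduce the estimate of $\E\|M_t\|_{H^{-2\beta}}^{2p}$ to a control of
\[
Q(r) := \sum_{i\ge 1}\lambda^{2i}\sum_{j\ge 1} j^{-2\alpha_1}\,\|A_{i,i+j}\tilde X_r\|_{H^{-1-\beta}}^{2}.
\]
Using $(A_{i,i+j}\tilde X)_i = -\tilde X_{i+j}$ and $(A_{i,i+j}\tilde X)_{i+j} = \tilde X_i$, one has
\[
Q(r) = \sum_i \lambda^{-2\beta i}\sum_j j^{-2\alpha_1}\tilde X_{i+j}^{2}(r) + \sum_i \lambda^{-2\beta i}\tilde X_i^{2}(r)\sum_j j^{-2\alpha_1}\lambda^{-2(1+\beta)j},
\]
and bounding $j^{-2\alpha_1}\le \|\hat\theta\|_{\ell^\infty}^{2}$ in both sums (the geometric factor $\lambda^{-2(1+\beta)j}$ providing the $j$-summability in the second term) gives
\[
Q(r) \lesssim_\lambda \|\hat\theta\|_{\ell^\infty}^{2}\,\|x\|_{\ell^2}^{2}\,\frac{\lambda^{-2\beta}}{1-\lambda^{-2\beta}}.
\]
Combined with the standard estimate $\int_0^t [\nu(t-r)]^{-(1-\beta)}\,\d r \lesssim \nu^{\beta-1}\beta^{-1}\, t^{\beta}$, this yields a fixed-time bound proportional to $C_{1-\beta}\sqrt{\nu^\beta \beta^{-1}}\,\|\hat\theta\|_{\ell^\infty}\,\|x\|_{\ell^2}\,t^{\beta/2}\bigl(\lambda^{-2\beta}/(1-\lambda^{-2\beta})\bigr)^{1/2}$, valid for $\beta\le 1/2$.

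\textbf{Step 2 (H\"older increments and Kolmogorov).} Using the semigroup identity
\[
M_t = e^{\nu(t-s)S}M_s + \sqrt{2\nu}\sum_{i,j} j^{-\alpha_1}\lambda^i \int_s^{t} e^{\nu(t-r)S} A_{i,i+j}\tilde X_r\,\d W_{i,j}(r),
\]
and Lemma~\ref{similar-heat-semigroup-property}(ii) to estimate $\|(I-e^{\nu(t-s)S})M_s\|_{H^{-4\beta}} \lesssim \nu^\beta |t-s|^\beta \|M_s\|_{H^{-2\beta}}$, while treating the residual stochastic integral from $s$ to $t$ by Step~1 applied on $[s,t]$, I would obtain
\[
\bigl(\E\|M_t-M_s\|_{H^{-4\beta}}^{2p}\bigr)^{1/(2p)} \le C(p,\lambda,\beta,T)\sqrt{\nu^\beta\beta^{-1}}\,\|\hat\theta\|_{\ell^\infty}\|x\|_{\ell^2}\,|t-s|^\beta
\]
for $\beta\in(0,1/4]$. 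Relabeling $2\beta \mapsto \beta$ and $2p\mapsto p$, choosing $p>1/\beta$ (and using H\"older to dominate smaller moments), and invoking Kolmogorov's continuity criterion on $[0,T]$ then upgrades the increment bound to the asserted uniform-in-$t$ estimate \eqref{estimate-stochastic-convolution}, with all constants finite.

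\textbf{Main obstacle.} There is no substantive obstacle; the argument is a direct transcription of Lemma~\ref{lem-estimate-expectation-1}. The only point that deserves a moment's attention is that $\hat\theta\notin\ell^2$ causes no harm: the key inequality in Step~1 uses only $\|\hat\theta\|_{\ell^\infty}$ together with the geometric factor $\lambda^{-2(1+\beta)j}$ for the $j$-summation, so no $\ell^2$-summability of $\hat\theta$ is ever invoked.
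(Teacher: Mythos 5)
Your proposal is correct and follows exactly the route the paper takes: the paper's own ``proof'' of this lemma is just the one-line remark that one replaces $X$ by $\tilde X$ and $\theta$ by $\hat\theta$ in Lemma \ref{lem-estimate-expectation-1}, noting that the argument never uses $\|\theta\|_{\ell^2}=1$. Your verification that only $\|\hat\theta\|_{\ell^\infty}$ enters (with the geometric factor $\lambda^{-2(1+\beta)j}$ supplying the $j$-summability) is precisely the point the paper leaves implicit, and the rest of your two-step BDG/Kolmogorov argument is a faithful transcription of that earlier proof.
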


Based on this lemma, we can now treat $M_t$ as a given element in $C_t H^{-\beta}$ for some $\beta \in \left(0, 1 \right]$ and turn to the following deterministic equation
  \begin{equation}\label{central-limit-model-N-limit-new}
  \phi_t= e^{\nu t S}\phi_0 +\int_0^t e^{\nu (t-r) S} \big[ B(\phi_r,\tilde X_r)+ B(\tilde X_r, \phi_r) \big]\,\d r + M_t,
  \end{equation}
where $\phi$ is the unknown; note that, unlike in \eqref{central-limit-model-N-limit-mild}, we consider general initial data $\phi_0\in H^{-\beta}$.

\begin{proposition}\label{prop-central-limit-wellposedness}
Let $\tilde X$ be the unique solution to \eqref{thm-scaling-limit.1}, $\beta\in \left(0, 1 \right]$. Then for any given $M\in C_t H^{-\beta},\ \phi_0 \in H^{-\beta}$, there exists a unique solution $\phi \in C_t H^{-\beta}$ to \eqref{central-limit-model-N-limit-new}. Moreover, when $\phi_0=0$, the solution map $M \mapsto \phi =: \mathcal{T}M$ is a bounded linear operator and satisfies
  $$\|\mathcal{T}M\|_{C_t H^{-\beta}} \lesssim e^{2\|x \|_{\ell^2}\sqrt{T/\nu}} \|M\|_{C_t H^{-\beta}}.
  $$

\end{proposition}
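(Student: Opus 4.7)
The plan is a standard contraction-mapping argument, exploiting the parabolic smoothing of the semigroup $\{e^{\nu tS}\}_{t\geq 0}$ to overcome the derivative loss in the nonlinearity. Define the map $\Psi: C_t H^{-\beta} \to C_t H^{-\beta}$ by
$$(\Psi\phi)_t = e^{\nu tS}\phi_0 + \int_0^t e^{\nu(t-r)S}\bigl[B(\phi_r, \tilde X_r) + B(\tilde X_r, \phi_r)\bigr]\,\d r + M_t.$$
Lemma \ref{similar-heat-semigroup-property}(i) (with $\rho=0$) gives $\|e^{\nu tS}\phi_0\|_{H^{-\beta}}\leq\|\phi_0\|_{H^{-\beta}}$, and $M\in C_tH^{-\beta}$ by assumption, so the difficulty is only in the nonlinear convolution.

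The core bilinear estimate combines Lemma \ref{lem-nonlinearity-2} (with $a=-\beta$, $b=0$, or vice versa) to get
$$\|B(\phi_r,\tilde X_r)\|_{H^{-1-\beta}} + \|B(\tilde X_r,\phi_r)\|_{H^{-1-\beta}} \lesssim_\lambda \|\phi_r\|_{H^{-\beta}}\|\tilde X_r\|_{\ell^2}$$
with Lemma \ref{lem-heat-integral-property} (with $a=-1-\beta$), which is exactly the place where the semigroup recovers one derivative, yielding
$$\bigg\|\int_0^t e^{\nu(t-r)S}\bigl[B(\phi_r,\tilde X_r)+B(\tilde X_r,\phi_r)\bigr]\,\d r\bigg\|_{H^{-\beta}}^2 \lesssim \frac{\|x\|_{\ell^2}^2}{\nu}\int_0^t \|\phi_r\|_{H^{-\beta}}^2\,\d r,$$
using $\|\tilde X_r\|_{\ell^2}\leq\|x\|_{\ell^2}$. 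Linearity of the nonlinearity in $\phi$ (with $\tilde X$ fixed) gives an identical estimate for $\Psi\phi^1-\Psi\phi^2$. Thus on any subinterval $[0,T_0]$ with $C\|x\|_{\ell^2}^2 T_0/\nu < 1$, $\Psi$ is a contraction on $C([0,T_0];H^{-\beta})$. Iterating the fixed-point construction on consecutive intervals of length $T_0\sim \nu/\|x\|_{\ell^2}^2$ produces a unique solution $\phi\in C_tH^{-\beta}$ on all of $[0,T]$.

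For the operator bound on $\mathcal{T}$, take $\phi_0=0$. The identity $\phi_t - M_t = \int_0^t e^{\nu(t-r)S}[B(\phi_r,\tilde X_r)+B(\tilde X_r,\phi_r)]\,\d r$ combined with the estimate above yields
$$\|\phi_t\|_{H^{-\beta}}^2 \leq 2\|M\|_{C_tH^{-\beta}}^2 + \frac{C\|x\|_{\ell^2}^2}{\nu}\int_0^t\|\phi_r\|_{H^{-\beta}}^2\,\d r,$$
so Gronwall's inequality gives $\|\phi\|_{C_tH^{-\beta}}^2 \lesssim e^{C\|x\|_{\ell^2}^2 t/\nu}\|M\|_{C_tH^{-\beta}}^2$, from which the stated bound follows (after absorbing constants and taking square roots). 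The main obstacle is really only the bilinear estimate above, since it is there that one must ensure Lemma \ref{lem-nonlinearity-2} is applied with exponents whose sum $a+b=-\beta$ lies in the admissible range for Lemma \ref{lem-heat-integral-property}; for $\beta\in(0,1]$ this causes no trouble.
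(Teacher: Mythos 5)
Your argument is essentially the paper's: a contraction-mapping fixed point on $C_t H^{-\beta}$, using the smoothing of $e^{\nu tS}$ to absorb the one-derivative loss in $B$, followed by extension to $[0,T]$ (legitimate here because the equation is affine in $\phi$, so the local existence time is independent of the data) and a Gronwall argument for the bound on $\mathcal{T}$. The only substantive difference is the smoothing lemma you invoke: the paper estimates pointwise in time via Lemma \ref{similar-heat-semigroup-property}(i), $\|e^{\nu(t-r)S}h\|_{H^{-\beta}}\lesssim [\nu(t-r)]^{-1/2}\|h\|_{H^{-1-\beta}}$, which produces the singular kernel $(t-r)^{-1/2}$, a contraction factor $\sqrt{T/\nu}$, and a singular (generalized) Gronwall inequality at the end; you instead use the $L^2_t$ estimate of Lemma \ref{lem-heat-integral-property} with $a=-1-\beta$, which gives a regular kernel and an ordinary Gronwall inequality. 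Both routes are valid, and yours is arguably cleaner. The one inaccuracy is your closing claim that ``the stated bound follows'': ordinary Gronwall yields $\|\mathcal{T}M\|_{C_tH^{-\beta}}\lesssim e^{C\|x\|_{\ell^2}^2 T/\nu}\|M\|_{C_tH^{-\beta}}$, whose exponent is the \emph{square} of the one in the proposition and is not dominated by $e^{2\|x\|_{\ell^2}\sqrt{T/\nu}}$ when $\|x\|_{\ell^2}\sqrt{T/\nu}$ is large. This is harmless for the rest of the paper, where only the boundedness of $\mathcal{T}$ (hence Gaussianity and finite moments of $\xi$ in Corollary \ref{cor-central-limit-wellposedness}) is used, but you should either state the bound you actually prove or switch to the pointwise smoothing estimate if you want the exponent $2\|x\|_{\ell^2}\sqrt{T/\nu}$ as written.
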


\begin{proof}
We first define a map $\Gamma$ on $C_t H^{-\beta}$ by
  $$(\Gamma \phi)_t := e^{\nu t S}\phi_0+ \int_0^t e^{\nu (t-r) S} \big[B(\phi_r,\tilde X_r)+ B(\tilde X_r, \phi_r) \big]\,\d r + M_t.
  $$
We want to apply the contraction principle to prove the well posedness. Since $M= (M_t)_{t\in [0,T]} \in C_t H^{-\beta}$ and $\tilde X\in L^\infty(0,T; \ell^2)$, by Lemma \ref{lem-nonlinearity-2}, we can easily prove that $\Gamma: C_t H^{-\beta} \mapsto C_t H^{-\beta}$.

We now show that $\Gamma$ is a contraction map. For any $ \phi^1, \phi^2 \in C_t H^{-\beta}$, set $f=\phi^1-\phi^2$, then we obtain
  $$\begin{aligned}
  \|(\Gamma \phi^1)_t - (\Gamma \phi^2)_t\|_{H^{-\beta}}
  &= \bigg\|\! \int_0^t\! e^{\nu (t-r) S} \big[B(f_r,\tilde X_r)+ B(\tilde X_r, f_r) \big]\,\d r \bigg\|_{H^{-\beta}}\\
  &\le \int_0^t \big\|e^{\nu (t-r) S} \big[B(f_r,\tilde X_r)+ B(\tilde X_r, f_r) \big] \big\|_{H^{-\beta}}\,\d r   \\
  &\le C \int_0^t \frac{1}{[\nu(t-r)]^{1/2}} \big\| B(f_r,\tilde X_r)+ B(\tilde X_r, f_r) \big\|_{H^{-\beta-1}}\,\d r  \\
  &\le C \lambda^{\beta} \|x \|_{\ell^2}\, \|f_r\|_{C_t H^{-\beta} } \sqrt{t/\nu },
  \end{aligned}$$
where in the third step we use Lemma \ref{similar-heat-semigroup-property} (i) and in the last step we use Lemma \ref{lem-nonlinearity-2}. Taking the supremum over $t\in[0,T]$, we can get
  $$\|(\Gamma \phi^1) - (\Gamma \phi^2)\|_{C_T H^{-\beta}}
  \le C \lambda^{\beta} \|x \|_{\ell^2} \sqrt{T/\nu }\, \|\phi^1 -\phi^2\|_{C_T H^{-\beta} }.
  $$
Thus we can choose $T$ sufficiently small such that $\Gamma$ is contractive on $C_T H^{-\beta}$. Then we derive the local existence and uniqueness of solutions to \eqref{central-limit-model-N-limit-new}. Note that the local existence time $T$ is independent of the initial data $\phi_0$, we can therefore uniquely extend the local solution to obtain the unique global solution.

When $\phi_0=0$, it is easy to verify that the map $M \mapsto \phi =: \mathcal{T}M$ is linear and it remains to prove that $\mathcal{T}$ is bounded. Similar to the estimate above, by Lemma \ref{similar-heat-semigroup-property} (i) and Lemma \ref{lem-nonlinearity-2}, we can get
  $$\begin{aligned}
  \|\phi_t\|_{H^{-\beta}}
  & \le \int_0^t \big\|e^{\nu (t-r)S} \big[B(\phi_r, \tilde X_r)+B(\tilde X_r, \phi_r)\big] \big\|_{H^{-\beta}} \,\d r +\|M_t\|_{H^{-\beta}}\\
  & \lesssim \int_0^t \frac{1}{[\nu (t-r)]^{1/2}}\|x \|_{\ell^2}\, \|\phi_r\|_{H^{-\beta}} \,\d r + \|M_t\|_{H^{-\beta}}.
  \end{aligned}$$
Then by the generalized Gronwall inequality, we can derive
  $$
  \|\phi\|_{C_t H^{-\beta}} \lesssim e^{\|x \|_{\ell^2}\frac{2}{\sqrt{\nu}}\sqrt{T}} \|M\|_{C_t H^{-\beta}}.
  $$
The proof is completed.
\end{proof}

From Proposition \ref{prop-central-limit-wellposedness}, we can immediately get that there exists a unique solution $\xi:=\mathcal{T}M$ to \eqref{central-limit-model-N-limit-mild}. Since $M$ is a Gaussian process, the linearity of the map $\mathcal{T}$ implies that $\xi:=\mathcal{T}M$ is also Gaussian. Therefore, we arrive at the following corollary.

\begin{corollary}\label{cor-central-limit-wellposedness}
Let $\tilde X$ be the unique solution to \eqref{thm-scaling-limit.1}, $\beta\in (0, 1]$. Then there exists a unique solution $\xi:= \mathcal{T}M \in C_t H^{-\beta}$ to \eqref{central-limit-model-N-limit-mild}. In particular, $\xi$ is a Gaussian process and satisfies $ \E \big[\|\xi\|_{C_t H^{-\beta}}^p \big] < \infty$ for any $ p\in [1,\infty )$.
\end{corollary}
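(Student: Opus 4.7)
The plan is to combine the pathwise deterministic theory of Proposition \ref{prop-central-limit-wellposedness} with the moment regularity of the stochastic convolution $M$ provided by Lemma \ref{lem-stochastic-convolution-regularity}. Since $\tilde X$ is deterministic and $\{W_{i,j}\}$ is a family of independent Brownian motions, $M$ is a centered Gaussian process with values in $C_T H^{-\beta}$, and $\mathcal{T}$ is a bounded linear operator on $C_T H^{-\beta}$; the conclusions then follow essentially from pushing the Gaussian law of $M$ through $\mathcal{T}$.

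First, I would invoke Lemma \ref{lem-stochastic-convolution-regularity}: for the given $\beta \in (0,1]$ and every $p \in [1,\infty)$,
$$\E\bigl[\|M\|_{C_T H^{-\beta}}^p\bigr] < \infty,$$
so on a full-measure event $\Omega_0 \subset \Omega$ the trajectories $t \mapsto M_t(\omega)$ lie in $C_T H^{-\beta}$. For each $\omega \in \Omega_0$, Proposition \ref{prop-central-limit-wellposedness} with $\phi_0 = 0$ produces a unique element $\xi(\omega) := \mathcal{T}(M(\omega)) \in C_T H^{-\beta}$ satisfying \eqref{central-limit-model-N-limit-mild} pathwise. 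Measurability of $\omega \mapsto \xi(\omega)$ is inherited from the continuity of $\mathcal{T}: C_T H^{-\beta} \to C_T H^{-\beta}$ and the measurability of $M$.

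Next, the Gaussianity of $\xi$ follows by observing that for any continuous linear functional $\ell$ on $C_T H^{-\beta}$, the random variable $\ell(\mathcal{T}M) = (\ell \circ \mathcal{T})(M)$ is a continuous linear functional of the centered Gaussian process $M$, hence real-valued Gaussian. This is the standard criterion for $\xi$ to be a Gaussian $C_T H^{-\beta}$-valued random element. Finally, the $L^p$-bound is obtained from the operator estimate
$$\|\xi\|_{C_T H^{-\beta}} = \|\mathcal{T}M\|_{C_T H^{-\beta}} \lesssim e^{2\|x\|_{\ell^2}\sqrt{T/\nu}} \|M\|_{C_T H^{-\beta}}$$
of Proposition \ref{prop-central-limit-wellposedness}, combined once more with Lemma \ref{lem-stochastic-convolution-regularity}.

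No real obstacle is expected: the substantive analytical work has already been done in the two preceding results. The only subtlety to address is that the pathwise application of Proposition \ref{prop-central-limit-wellposedness} yields a genuine stochastic process measurable with respect to the underlying filtration, which is straightforward here because $\mathcal{T}$ is deterministic and continuous, and $M$ is adapted by construction as an It\^o integral.
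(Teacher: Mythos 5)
Your proposal is correct and follows essentially the same route as the paper: the paper likewise obtains $\xi = \mathcal{T}M$ directly from Proposition \ref{prop-central-limit-wellposedness} with $\phi_0=0$, deduces Gaussianity from the linearity of $\mathcal{T}$ applied to the Gaussian process $M$, and gets the $p$-th moment bound by combining the operator estimate for $\mathcal{T}$ with Lemma \ref{lem-stochastic-convolution-regularity}. Your added remarks on pathwise application, measurability, and the linear-functional criterion for Gaussianity are just slightly more explicit versions of the same argument.
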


We now show that $\xi^N \to \xi$ as $N\to \infty$.

\begin{proof}[Proof of Theorem \ref{thm-CLT}.]
The proof is slight long and is divided into three steps.

\textbf{Step 1: preliminary computations.}

Rewrite \eqref{central-limit-model-N} in its mild form:
  \begin{equation}\label{central-limit-model-N-mild}
  \begin{aligned}
  \xi_t^N &=\int_0^t e^{\nu(t-r)S} \big[B(\xi_r^N,X_r^N)+ B(\tilde X_r, \xi_r^N)\big] \,\d r+ \frac{\nu}{\sqrt{\varepsilon_N}}\int_0^t e^{\nu(t-r)S} (S_{\theta^N}-S)X_r^N \,\d r \\
  &\quad+ \sqrt{2\nu}\int_0^t  \sum_i  \lambda^i\sum_{j=1}^N j^{-\alpha_1}\, e^{\nu(t-r)S} A_{i,i+j} X_r^N \, \d W_{i,j}(r).
  \end{aligned}
  \end{equation}
Then by \eqref{central-limit-model-N-mild} and \eqref{central-limit-model-N-limit-mild}, we can derive that
  $$\begin{aligned}
  & \quad\ \E \|\xi_t^N-\xi_t\|_{H^{-\beta}}^2 \\
  & \lesssim \E \bigg\|\! \int_0^t\! e^{\nu (t-r)S} B(\xi_r^N-\xi_r, X_r^N) \,\d r \bigg\|_{H^{-\beta}}^2
  + \E \bigg\|\! \int_0^t\! e^{\nu (t-r)S} B(\xi_r, X_r^N-\tilde X_r) \,\d r \bigg\|_{H^{-\beta}}^2\\
  &\quad + \E \bigg\|\! \int_0^t\! e^{\nu (t-r)S} B(\tilde X_r, \xi_r^N-\xi_r ) \,\d r \bigg\|_{H^{-\beta}}^2
  + \E \bigg\|\frac{\nu}{\sqrt{\varepsilon_N}} \int_0^t\! e^{\nu (t-r)S} (S_{\theta^N}-S)X^N_r \,\d r \bigg\|_{H^{-\beta}}^2 +\hat M_t \\
  &=: \sum\limits_{i=1}^4 I_i + \hat M_t,
  \end{aligned}$$
where
  $$\begin{aligned}
  \hat M_t
  &= 2\nu\, \E \bigg\|\int_0^t  \sum_i  \lambda^i\sum_{j=1}^N j^{-\alpha_1} e^{\nu(t-r)S} A_{i,i+j} X_r^N \, \d W_{i,j}(r)\\
  & \hskip40pt -\int_0^t  \sum_i  \lambda^i\sum_{j=1}^\infty j^{-\alpha_1} e^{\nu(t-r)S} A_{i,i+j} \tilde X_r \, \d W_{i,j}(r)\bigg\|_{H^{-\beta}}^2.
  \end{aligned}$$

For $I_1$, by the first estimate of Lemma \ref{lem-heat-integral-property} and Lemma \ref{lem-nonlinearity-2}, one can get
  $$\begin{aligned}
  I_1 & \lesssim \frac{1}{\nu}\, \E \int_0^t \big\|B(\xi_r^N-\xi_r, X_r^N)\big\|_{H^{-\beta-1}}^2 \,\d r \le C(\lambda) \frac{\|x \|_{\ell^2}^2}{\nu}\, \E \int_0^t \|\xi_r^N-\xi_r\|_{H^{-\beta}}^2 \,\d r.
  \end{aligned}$$
Similarly, we can obtain
  $$\begin{aligned}
  I_3 \lesssim \frac{1}{\nu}\, \E \int_0^t \big\|B(\tilde X_r, \xi_r^N-\xi_r ) \big\|_{H^{-\beta-1}}^2 \,\d r
  \le  C(\lambda) \frac{\|x \|_{\ell^2}^2}{\nu}\, \E \int_0^t \|\xi_r^N-\xi_r\|_{H^{-\beta}}^2 \,\d r.
  \end{aligned}$$
Then combining the above estimates, we conclude from Gronwall's inequality that
  \begin{equation}\label{gronwall-inequality}
  \sup\limits_{t\in [0,T]} \E \|\xi_t^N-\xi_t\|_{H^{-\beta}}^2 \lesssim e^{C(\lambda) \|x \|_{\ell^2}^2 T/\nu} \sup_{t\in [0,T]} (I_2 +I_4 + \hat M_t) .
  \end{equation}
It remains to estimate the terms $I_2, I_4$ and $\hat M_t$.

\textbf{Step 2: estimates of $I_2$ and $I_4$.}

For any fixed small $\alpha \in (0,1)$, by Lemma \ref{similar-heat-semigroup-property} (i), one obtains that
  $$\begin{aligned}
  I_2 & \le \E \bigg( \int_0^t \big\| e^{\nu (t-r)S}B(\xi_r, X_r^N-\tilde X_r) \big\|_{H^{-\beta}} \,\d r \bigg)^2\\
  & \le C_{1+\alpha}^2\, \E \bigg( \int_0^t \frac{1}{[\nu(t-r)]^{\frac{1+\alpha}{2}}} \big\| B(\xi_r, X_r^N-\tilde X_r) \big\|_{H^{-1-\beta-\alpha}} \,\d r \bigg)^2.
  \end{aligned}$$
Then by Lemma \ref{lem-nonlinearity-2}, we can get
  $$\begin{aligned}
  I_2 &\le  C_{1+\alpha}^2 \lambda^{2\alpha}\, \E \bigg( \int_0^t \frac{1}{[\nu(t-r)]^{\frac{1+\alpha}{2}}} \| \xi_r\|_{H^{-\beta}} \|X_r^N-\tilde X_r\|_{H^{-\alpha}} \,\d r \bigg)^2 \\
  &\le C_{1+\alpha}^2 \frac{\lambda^{2\alpha} }{\nu^{1+\alpha}} \big(\E \|\xi\|_{C_t^0H^{-\beta}}^4 \big)^{\frac12} \big(\E \|X^N-\tilde X\|_{C_t^0H^{-\alpha}}^4 \big)^{\frac12} \bigg(\int_0^t \frac{\d r}{(t-r)^{\frac{1+\alpha}{2}}} \bigg)^2.
  \end{aligned}$$
The first expectation can be estimated by using Corollary \ref{cor-central-limit-wellposedness}, while the second one is treated as below:
  $$\big(\E \|X^N-\tilde X\|_{C_t^0H^{-\alpha}}^4 \big)^{\frac12} \le \|x \|_{\ell^2} \big(\E \|X^N-\tilde X\|_{C_t^0H^{-\alpha}}^2 \big)^{\frac12} \lesssim \|x \|_{\ell^2}^2 \|\theta^N\|_{\ell^{\infty}} = \|x \|_{\ell^2}^2 \sqrt{\varepsilon_N}, $$
where the second step follows from Theorem \ref{thm-quantitative-convergence-rate}. Therefore,
  $$I_2 \le C(\alpha, \lambda,\nu, T, \delta, \beta, \|x \|_{\ell^2})\, \varepsilon_N. $$

Now we turn to estimate $I_4$. For any fixed $\delta_1 \in (1-\beta/2, 1)$, by Lemma \ref{similar-heat-semigroup-property} (i), we have
  $$\begin{aligned}
  I_4 &\le \frac{\nu^2}{\varepsilon_N}\, \E \bigg( \int_0^t \big\|e^{\nu (t-r)S} (S_{\theta^N}-S)X^N_r \big\|_{H^{-\beta}} \,\d r  \bigg)^2\\
  &\le \frac{\nu^2}{\varepsilon_N}\, \E \bigg( \int_0^t \frac{1}{[\nu (t-r)]^{\delta_1}} \big\| (S_{\theta^N}-S) X^N_r \big\|_{H^{-\beta-2\delta_1}} \,\d r  \bigg)^2.\\
  \end{aligned}$$
Then, as $\beta> 2-2\delta_1$, we can get
  $$\begin{aligned}
  I_4 &\le  \frac{\nu^2}{\varepsilon_N}\, \E \Bigg\{ \int_0^t \frac{1}{[\nu (t-r)]^{\delta_1}} \bigg[ \sum_{i=1}^{\infty}\frac{1}{\lambda^{2(\beta+2\delta_1)i}} \bigg(\sum_{j=1}^{i-1}(\theta^N_j)^2\lambda^{2(i-j)}\bigg)^2 (X^N_i(r))^2\bigg]^{\frac12} \,\d r  \Bigg\}^2 \\
  &\le \frac{\nu^2}{\varepsilon_N} \|\theta^N\|_{\ell^{\infty}}^4\, \E \Bigg\{ \int_0^t \frac{1}{[\nu (t-r)]^{\delta_1}} \bigg[ \sum_{i=1}^{\infty} (X^N_i(r))^2\bigg]^{\frac12} \,\d r  \Bigg\}^2 \\
  &\le C(T,\delta_1,\lambda)\frac{\nu^{2-2\delta_1}}{\varepsilon_N} \|\theta^N\|_{\ell^{\infty}}^4 \|x \|_{\ell^2}^2 \\
  &= C(T,\delta_1,\lambda,\nu,\|x \|_{\ell^2})\, \varepsilon_N.
  \end{aligned}$$

\textbf{Step 3: estimate of $\hat M_t$.}

The last term $\hat M_t$ is the most difficult one to deal with. We split it into the following two parts:
  \begin{equation}\label{hat-M-t}
  \begin{aligned}
  \hat M_t
  &\lesssim \nu\,\E \bigg\|\int_0^t  \sum_i  \lambda^i\sum_{j=1}^N j^{-\alpha_1} e^{\nu(t-r)S} A_{i,i+j} (X_r^N -\tilde X_r)\, \d W_{i,j}(r) \bigg\|_{H^{-\beta}}^2 \\
  &\quad +  \nu\,\E \bigg\| \int_0^t  \sum_i  \lambda^i\sum_{j=N+1}^\infty j^{-\alpha_1} e^{\nu(t-r)S} A_{i,i+j} \tilde X_r \, \d W_{i,j}(r)\bigg\|_{H^{-\beta}}^2\\
  &= J_1+ J_2.
  \end{aligned}
  \end{equation}
First,  let us denote
  $$g(r):=X_r^N -\tilde X_r, \quad r\in [0,T]; $$
then by It\^o's isometry, we can get
  $$\begin{aligned}
  J_1
  &= \nu\, \E \int_0^t \sum_i  \lambda^{2i} \sum_{j=1}^N j^{-2\alpha_1} \big\| e^{\nu(t-r)S} A_{i,i+j}\, g(r) \big\|_{H^{-\beta}}^2 \,\d r \\
  &= \nu\, \E \int_0^t \sum_i  \lambda^{2i} \sum_{j=1}^N j^{-2\alpha_1} \big( \lambda^{-2\beta i}e^{-2\nu (t-r)\lambda^{2i}} g_{i+j}^2(r) + \lambda^{-2\beta(i+j)}e^{-2\nu (t-r)\lambda^{2(i+j)}} g_i^2(r) \big) \,\d r \\
  &= J_{11}+J_{12}.
  \end{aligned}$$
For $J_{11}$, we have
  $$
  J_{11}\le \nu\, \E \int_0^t \sum_i  \lambda^{2(1-\beta)i} e^{-2\nu (t-r)\lambda^{2i}} \sum_{j=1}^{\infty} j^{-2\alpha_1} g_{i+j}^2(r) \,\d r,  $$
where, for some fixed $n_0\in \mathbb{Z}_{+}$,
  $$\begin{aligned}
   \sum_{j=1}^{\infty} j^{-2\alpha_1} g_{i+j}^2(r)
  &\le \sum_{j=1}^{n_0} j^{-2\alpha_1} g_{i+j}^2(r)+ \sum_{j=n_0+1}^{\infty} n_0^{-2\alpha_1} g_{i+j}^2(r) \\
  &\le \lambda^{2\alpha (i+n_0)} \sum_{j=1}^{n_0} \lambda^{-2\alpha (i+j)} g_{i+j}^2(r) + n_0^{-2\alpha_1} \|g(r) \|_{\ell^2}^2\\
  &\le \lambda^{2\alpha (i+n_0)} \|g(r)\|_{H^{-\alpha}}^2 + n_0^{-2\alpha_1} \|g(r) \|_{\ell^2}^2.
  \end{aligned}$$
Hence we obtain that
  $$\begin{aligned}
  J_{11}&\le \nu\, \E \int_0^t \sum_i  \lambda^{2(1-\beta)i} e^{-2\nu (t-r)\lambda^{2i}} \big( \lambda^{2\alpha (i+n_0)} \|g(r)\|_{H^{-\alpha}}^2 + n_0^{-2\alpha_1} \|g(r)\|_{\ell^2}^2 \big) \,\d r\\
  &\le \nu \lambda^{2\alpha n_0} \E\big(\|g\|_{C_t^0H^{-\alpha}}^2 \big) \sum_i  \lambda^{2(1+\alpha-\beta)i} \int_0^t e^{-2\nu (t-r)\lambda^{2i}} \,\d r \\
  &\quad + \nu n_0^{-2\alpha_1}\cdot 4 \|x \|_{\ell^2}^2 \sum_i \lambda^{2(1-\beta)i} \int_0^t e^{-2\nu (t-r)\lambda^{2i}} \,\d r\\
  &\le \lambda^{2\alpha n_0} \E\big(\|g\|_{C_t^0H^{-\alpha}}^2 \big) \sum_i  \lambda^{2(\alpha-\beta) i} +2 n_0^{-2\alpha_1} \|x \|_{\ell^2}^2 \sum_i \lambda^{-2\beta i}.
  \end{aligned}$$
Next, for $J_{12}$, take $\rho\in (0,1)$ such that $\beta +\rho > 1+\alpha$;  by the same idea as in the proof of Lemma \ref{similar-heat-semigroup-property} (i), we have
  $$\begin{aligned}
  J_{12}& = \nu\, \E \int_0^t \sum_i  \lambda^{2i} \sum_{j=1}^N j^{-2\alpha_1} \lambda^{-2\beta(i+j)}e^{-2\nu (t-r)\lambda^{2(i+j)}} g_i^2(r) \,\d r\\
  &\le \nu C_{\rho}\, \E \int_0^t \sum_i  \lambda^{2i} \sum_{j=1}^N j^{-2\alpha_1} \lambda^{-2\beta(i+j)} \frac{1}{[2\nu (t-r)]^{\rho}} \lambda^{-2(i+j)\rho}  g_i^2(r) \,\d r\\
  &\le \nu C_{\rho}\, \E \int_0^t \sum_i \lambda^{-2\alpha i}g_i^2(r) \lambda^{2(1+\alpha-\beta-\rho)i} \sum_{j=1}^N \lambda^{-2(\beta+\rho)j} \frac{1}{[\nu (t-r)]^{\rho}}  \,\d r\\
  &\le \nu^{1-\rho} C_{\rho} C(T,\beta,\rho,\lambda)\, \E \big(\|g\|_{C_t^0H^{-\alpha}}^2 \big),
  \end{aligned}$$
where in the last inequality, we have used $\beta +\rho > 1+\alpha$.
Combining the above estimates on $J_{11}$ and $J_{12}$, by Theorem \ref{thm-quantitative-convergence-rate}, we can derive that
  $$
  J_1 \le \lambda^{2\alpha n_0} C(T,\alpha, \delta, \nu ,\|x \|_{\ell^2})\, \varepsilon_N + n_0^{-2\alpha_1} \|x \|_{\ell^2}^2 C(\lambda, \beta)+ \nu^{1-\rho} C_{\rho} C(T,\beta,\rho,\lambda,\alpha, \delta, \nu ,\|x \|_{\ell^2})\, \varepsilon_N.
  $$

Next for $J_2$ defined in \eqref{hat-M-t}, we fix $\eta \in (1-\beta,1)$; again by It\^o's isometry and Lemma \ref{similar-heat-semigroup-property} (i), we have
  $$\begin{aligned}
  J_2 &= \nu\, \E \bigg\| \int_0^t  \sum_i  \lambda^i\sum_{j=N+1}^\infty j^{-\alpha_1} e^{\nu(t-r)S} A_{i,i+j} \tilde X_r \, \d W_{i,j}(r)\bigg\|_{H^{-\beta}}^2\\
  &= \nu\, \E \int_0^t  \sum_i  \lambda^{2i} \sum_{j=N+1}^\infty j^{-2\alpha_1} \big\|e^{\nu(t-r)S} A_{i,i+j} \tilde X_r \big\|_{H^{-\beta}}^2 \,\d r\\
  &\le \nu C_{\eta}\, \E \int_0^t \frac{1}{[\nu(t-r)]^{\eta}} \sum_i  \lambda^{2i} \sum_{j=N+1}^\infty j^{-2\alpha_1} \| A_{i,i+j} \tilde X_r \|_{H^{-\beta-\eta}}^2 \,\d r ,
  \end{aligned}$$
where
  $$\begin{aligned}
  &\quad \sum_i  \lambda^{2i} \sum_{j=N+1}^\infty j^{-2\alpha_1} \| A_{i,i+j} \tilde X_r \|_{H^{-\beta-\eta}}^2 \\
  &=\sum_i  \lambda^{2i} \sum_{j=N+1}^\infty j^{-2\alpha_1} \big(\lambda^{-2(\beta+\eta)i}\tilde X_{i+j}^2 (r) + \lambda^{-2(\beta+\eta)(i+j)}\tilde X_i^2 (r) \big)\\
  & =\widehat J_1 + \widehat J_2 .
  \end{aligned}$$
Since $1-\beta-\eta<0$, one obtains that
  $$\begin{aligned}
  \widehat J_1 &= \sum_i  \lambda^{2i} \sum_{j=N+1}^\infty j^{-2\alpha_1} \lambda^{-2(\beta+\eta)i}\tilde X_{i+j}^2 (r)\\
  &\le (N+1)^{-2\alpha_1} \sum_{i} \lambda^{2(1-\beta-\eta)i}  \sum_{j=N+1}^\infty \tilde X_{i+j}^2 (r)\\
  &\le (N+1)^{-2\alpha_1} \|x \|_{\ell^2}^2 \frac{\lambda^{2(1-\beta-\eta)}}{1-\lambda^{2(1-\beta-\eta)}}.
  \end{aligned}$$
Similarly, for $\widehat J_2$, we have
  $$\begin{aligned}
  \widehat J_2 &\le (N+1)^{-2\alpha_1} \sum_i  \lambda^{2(1-\beta-\eta)i} \tilde X_i^2 (r) \sum_{j=N+1}^\infty \lambda^{-2(\beta+\eta)j}\\
  &\le (N+1)^{-2\alpha_1} \|x \|_{\ell^2}^2 \frac{\lambda^{-2(\beta+\eta)(N+1)}}{1-\lambda^{-2(\beta+\eta)}}.
  \end{aligned}$$
Hence, we can easily derive that
  $$
  J_2 \le 2\nu^{1-\eta} C_{\eta} C(T,\eta,\beta,\lambda) (N+1)^{-2\alpha_1} \|x \|_{\ell^2}^2.
  $$
Substituting the estimates on $J_1$ and $J_2$ in \eqref{hat-M-t}, we obtain that
  $$\begin{aligned}
  \hat M_t \lesssim  \lambda^{2\alpha n_0}\tilde C_1\, \varepsilon_N + n_0^{-2\alpha_1} \|x \|_{\ell^2}^2 C(\lambda, \beta)+ \nu^{1-\rho} C_{\rho} \tilde C_2\, \varepsilon_N + \nu^{1-\eta} C_{\eta} \tilde C_3 (N+1)^{-2\alpha_1} \|x \|_{\ell^2}^2.
  \end{aligned}$$
where $\tilde C_1 = C(T,\alpha, \delta, \nu ,\|x \|_{\ell^2})$, $\tilde C_2=C(T,\beta,\rho,\lambda,\alpha, \delta, \nu ,\|x \|_{\ell^2})$ and $\tilde C_3 = C(T,\eta,\beta,\lambda)$.

Finally, combining the above estimates for $I_2,I_4, \hat M_t$ and \eqref{gronwall-inequality}, we can arrive at
  $$
  \sup\limits_{t\in [0,T]} \E \|\xi_t^N-\xi_t\|_{H^{-\beta}}^2 \lesssim C_1\, \varepsilon_N + C_2\, n_0^{-2\alpha_1} + C_3 (N+1)^{-2\alpha_1},
  $$
where $C_1 = C(\alpha, \lambda,\nu, T, \delta, \beta, \|x \|_{\ell^2}, \delta_1, \rho)$, $C_2 = C(\lambda, \beta, \|x \|_{\ell^2})$ and $C_3= C(T,\eta,\beta,\lambda, \nu, \|x \|_{\ell^2})$. Therefore, first letting $N \to \infty$ and then taking $n_0 \to \infty$, we get that
  $$
  \lim_{N\to \infty} \sup_{t\in [0,T]} \E \|\xi_t^N-\xi_t\|_{H^{-\beta}}^2= 0
  $$
and the proof is complete.
\end{proof}

\section{Dissipation enhancement}\label{section-dissipation-enhancement}

The stochastic viscous dyadic model \eqref{stoch-viscous-dyadic-model} can be written in It\^o form as
  \begin{equation}\label{stochastic-viscous-dyadic}
  \begin{aligned}
  \d X &= B(X)\,\d t + \kappa S X \,\d t+ \sqrt{2\nu} \sum_i  \lambda^i\sum_{j=1}^\infty \theta_j A_{i,i+j} X \, \d W_{i,j} + \nu S_{\theta} X \, \d t \\
  & = B(X)\,\d t + (\kappa+ \nu) S X \,\d t+ \sqrt{2\nu} \sum_i  \lambda^i\sum_{j=1}^\infty \theta_j A_{i,i+j} X \, \d W_{i,j} + \nu (S_{\theta}-S) X \, \d t.
  \end{aligned}
  \end{equation}
We shall denote $\mu = \kappa +\nu$. For any $ 0\le s\le t$, we have the mild formulation
  \begin{equation}\label{stochastic-viscous-dyadic-mild}
  X(t)=  e^{\mu(t-s)S}X(s) + \int_s^t e^{\mu(t-r)S} B(X(r))\,\d r + \widehat Z_{t,s}+ \nu \int_s^t e^{\mu(t-r)S} (S_{\theta}-S) X(r)  \, \d r,
  \end{equation}
where
  $$
  \widehat Z_{t,s}=\sqrt{2\nu} \int_s^t \sum_i  \lambda^i\sum_{j=1}^\infty \theta_j\, e^{\mu(t-r)S} A_{i,i+j} X(r) \, \d W_{i,j}(r).
  $$
We recall the energy balance \eqref{viscous-dyadic-energy-balance} for the reader's convenience: $\P$-a.s. for all $0\leq s<t$,
  \begin{equation}\label{viscous-energy-balance}
  \|X(t)\|_{\ell^2}^2 + 2 \kappa \int_s^t \|X(r)\|_{H^1}^2 \,\d r = \|X(s)\|_{\ell^2}^2,
  \end{equation}

Before proving the dissipation enhancement, we need the following result.

\begin{lemma}\label{lem-energy-decreasing}
There exists $\delta > 0$ such that, for any $ n \ge 0$,
  $$   \E \|X(n+1)\|_{\ell^2}^2 \le \delta\, \E \|X(n)\|_{\ell^2}^2,   $$
where
  $$
  \delta \lesssim \bigg( \frac{1}{\mu \lambda^2}+ \frac{\lambda^2}{\mu^2} \|X(0)\|_{\ell^2}^2 + \frac{\nu^2}{\mu^2} C(\lambda) \|\theta\|_{\ell^{\infty}}^4 + \|\theta\|_{\ell^{\infty}}^2 \frac{\nu C_{\rho}^2 C(\lambda)}{\kappa \mu^{\rho} (1-\rho)} \bigg).
  $$
In particular, by letting $\nu$ big and then choosing $\theta \in \ell^2$ with $\|\theta\|_{\ell^{\infty}}$ small enough, $\delta$ can be made arbitrarily small.
\end{lemma}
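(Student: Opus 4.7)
The natural approach is to analyse one unit time step via the mild formulation \eqref{stochastic-viscous-dyadic-mild} at $s=n$, $t=n+1$, writing
$$X(n+1) = e^{\mu S} X(n) + I_1 + I_2 + I_3,$$
with $I_1 := \int_n^{n+1} e^{\mu(n+1-r)S} B(X(r))\,\d r$, $I_2 := \widehat Z_{n+1,n}$ the stochastic convolution, and $I_3 := \nu\int_n^{n+1} e^{\mu(n+1-r)S}(S_\theta - S) X(r)\,\d r$. Using $\|\sum a_k\|_{\ell^2}^2 \lesssim \sum\|a_k\|_{\ell^2}^2$ and taking expectation, the task reduces to estimating each of the four resulting expectations separately; each one is arranged to produce exactly one of the four summands of the stated upper bound on $\delta$.

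For the purely linear semigroup term I would use the spectral bound $\|e^{\mu S} y\|_{\ell^2}^2 \le e^{-2\mu\lambda^2}\|y\|_{\ell^2}^2$ together with $e^{-y}\le y^{-1}$, producing the $(\mu\lambda^2)^{-1}\|X(n)\|_{\ell^2}^2$ contribution. For the It\^o--Stratonovich-type correction $I_3$, the diagonal entries of $S_\theta - S$ are dominated pointwise by $C(\lambda)\|\theta\|_{\ell^\infty}^2\lambda^{2n}$ exactly as in the proof of Lemma \ref{lem-estimate-expectation-2}; combining this with Lemma \ref{similar-heat-semigroup-property} and the uniform bound $\|X(r)\|_{\ell^2}\le\|X(0)\|_{\ell^2}\le L$ gives the $(\nu/\mu)^2 C(\lambda)\|\theta\|_{\ell^\infty}^4$ term.

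For the stochastic convolution $I_2$, I would apply It\^o's isometry
$$\E\|I_2\|_{\ell^2}^2 = 2\nu\sum_{i,j}\lambda^{2i}\theta_j^2\,\E\int_n^{n+1}\bigl\|e^{\mu(n+1-r)S}A_{i,i+j} X(r)\bigr\|_{\ell^2}^2\,\d r,$$
expand $A_{i,i+j}X = X_{i+j}e_i - X_i e_{i+j}$, and convert the pure exponential decay of the semigroup into an integrable time singularity $(\mu(n+1-r))^{-\rho}$ with $\rho\in(0,1)$ via Lemma \ref{similar-heat-semigroup-property}(i). Summing in $j$ yields $\sum_j\theta_j^2 X_{i+j}^2(r)\le \|\theta\|_{\ell^\infty}^2\|X(r)\|_{\ell^2}^2$, the geometric sum in $i$ produces a constant $C(\lambda)$, and the time integration contributes a $C_\rho^2/((1-\rho)\mu^\rho)$ factor. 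The remaining $\int_n^{n+1}\|X(r)\|_{\ell^2}^2\,\d r$ is absorbed using the energy balance \eqref{viscous-energy-balance} together with the Poincar\'e-type inequality $\|X\|_{H^1}\ge \lambda\|X\|_{\ell^2}$, which yields $\le (2\kappa\lambda^2)^{-1}\|X(n)\|_{\ell^2}^2$ and accounts for the $\kappa^{-1}$ appearing in the fourth summand of $\delta$.

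The nonlinear term $I_1$ is the main obstacle, as explicitly flagged in the remark after Theorem \ref{thm-enhance-dissipation}. I would bound $\|I_1\|_{\ell^2}^2$ through Lemma \ref{lem-heat-integral-property} followed by Lemma \ref{lem-nonlinearity-2}, which transforms the Duhamel integral into a (negative) power of $\mu$ times a time integral of $\|X(r)\|_{\ell^2}^4$. The subtle point is that this fourth-order quantity cannot be reduced to a linear multiple of $\|X(n)\|_{\ell^2}^2$ by any Sobolev manipulation; instead one peels off one factor of $\|X(r)\|_{\ell^2}^2$ using the global $\ell^2$-bound $\|X(r)\|_{\ell^2}\le \|X(0)\|_{\ell^2}$ (which is exactly where the restriction to bounded initial data enters and produces the $\|X(0)\|_{\ell^2}^2$ prefactor) and absorbs the remaining $\int_n^{n+1}\|X(r)\|_{\ell^2}^2\,\d r$ once more via \eqref{viscous-energy-balance} and Poincar\'e, yielding the $\lambda^2\|X(0)\|_{\ell^2}^2/\mu^2$ contribution. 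Adding the four estimates gives the claimed form of $\delta$, and the final assertion follows by first letting $\nu\to\infty$ (so $\mu\to\infty$, killing the first three summands) and then shrinking $\|\theta\|_{\ell^\infty}$ while keeping $\|\theta\|_{\ell^2}=1$ to kill the fourth.
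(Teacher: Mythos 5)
Your overall architecture matches the paper's: a Duhamel decomposition into the semigroup term, the nonlinear term, the stochastic convolution and the It\^o--Stratonovich corrector, with each piece producing one summand of $\delta$, the energy balance \eqref{viscous-energy-balance} supplying the $\kappa^{-1}$, and the final two-step limit ($\nu$ large first, then $\|\theta\|_{\ell^\infty}$ small). But there is a genuine gap in the starting point. You evaluate the mild formula \eqref{stochastic-viscous-dyadic-mild} at the single time $t=n+1$, whereas the paper first uses the a.s.\ monotonicity of $t\mapsto\|X(t)\|_{\ell^2}$ to write $\|X(n+1)\|_{\ell^2}^2\le\int_n^{n+1}\|X(t)\|_{\ell^2}^2\,\d t$ and only then expands. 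That extra $t$-integration is not cosmetic: it is what gives access to the second (two-derivative, $L^2_t$-based) estimate of Lemma \ref{lem-heat-integral-property}, which produces the $\mu^{-2}$ factors in front of the $B(X)$ and $(S_\theta-S)X$ terms, and it is what allows the Fubini step $\int_n^{n+1}\int_n^t(t-r)^{-\rho}\|X(r)\|_{H^1}^2\,\d r\,\d t=\frac{1}{1-\rho}\int_n^{n+1}(n+1-r)^{1-\rho}\|X(r)\|_{H^1}^2\,\d r$ in the stochastic convolution. Pointwise at $t=n+1$ the first estimate of Lemma \ref{lem-heat-integral-property} only yields $\mu^{-1}$, and for the corrector one cannot reach $\|(S_\theta-S)X\|_{H^{-2}}$ (which is what reduces to $\|\theta\|_{\ell^\infty}^4\|X\|_{\ell^2}^2$) without a time singularity of order $(n+1-r)^{-1}$ that is not integrable.

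The stochastic convolution is where your version actually breaks rather than merely losing powers of $\mu$. After applying Lemma \ref{similar-heat-semigroup-property}(i) with $\rho\in(0,1)$, the weight $\lambda^{2i}\lambda^{-2\rho i}=\lambda^{2(1-\rho)i}$ is \emph{not} summable in $i$, so the sum cannot be reduced to $\|\theta\|_{\ell^\infty}^2\|X(r)\|_{\ell^2}^2$ times a convergent geometric series as you claim; one is forced to keep $\|\theta\|_{\ell^\infty}^2\|X(r)\|_{H^1}^2$ (this is exactly how the $\kappa^{-1}$ from \eqref{viscous-energy-balance} enters, not via the Poincar\'e route you describe). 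But then, at fixed $t=n+1$, you are left with $\int_n^{n+1}(n+1-r)^{-\rho}\|X(r)\|_{H^1}^2\,\d r$, a product of an unbounded kernel with a function that is only known to be in $L^1_t$ from the energy balance; this integral cannot be controlled. The fix is precisely the paper's first step: integrate over $t\in[n,n+1]$ before estimating, so that Fubini removes the singularity. With that modification your estimates for the remaining three terms go through essentially as in the paper (the nonlinear term using the global bound $\|X(r)\|_{\ell^2}\le\|X(0)\|_{\ell^2}$ to peel off one factor, as you correctly identified).
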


\begin{proof}
Since $t \mapsto \|X(t)\|_{\ell^2}$ is almost surely decreasing, we have $\|X(n+1)\|_{\ell^2}^2 \le \int_n^{n+1} \|X(t)\|_{\ell^2}^2 \,\d t$. Then we can get from the mild formulation \eqref{stochastic-viscous-dyadic-mild} that
  $$\begin{aligned}
  \|X(n+1)\|_{\ell^2}^2
  & \lesssim \int_n^{n+1} \big\|e^{\mu(t-n)S}X(n)\big\|_{\ell^2}^2 \,\d t + \int_n^{n+1} \bigg\|\int_n^t e^{\mu(t-r)S} B(X(r))\,\d r \bigg\|_{\ell^2}^2 \,\d t \\
  &\quad + \int_n^{n+1} \|\widehat Z_{t,n}\|_{\ell^2}^2 \,\d t + \int_n^{n+1} \bigg\|\nu \int_n^t e^{\mu(t-r)S} (S_{\theta}-S) X(r)  \, \d r \bigg\|_{\ell^2}^2 \,\d t\\
  &=: I_1+ I_2+ I_3+ I_4.
  \end{aligned}$$

For $I_1$, one has that
  $$
  I_1 \le \int_n^{n+1} e^{-\mu(t-n)\lambda^2}\|X(n)\|_{\ell^2}^2 \,\d t \le \frac{1}{\mu \lambda^2} \|X(n)\|_{\ell^2}^2.
  $$
By the second estimate in Lemma \ref{lem-heat-integral-property}, we obtain that
  $$\begin{aligned}
  I_2 &\lesssim \frac{1}{\mu^2} \int_n^{n+1} \|B(X(r))\|_{H^{-2}}^2 \,\d r\\
  &\lesssim \frac{\lambda^2}{\mu^2} \int_n^{n+1} \|X(r)\|_{\ell^2}^4 \,\d r \lesssim \frac{\lambda^2}{\mu^2} \|X(0)\|_{\ell^2}^2 \|X(n)\|_{\ell^2}^2,
  \end{aligned}$$
where in the second step we have used Lemma \ref{lem-nonlinearity-2} with $a=0,b=-1$ and the fact that $\|X(r)\|_{H^{-1}} \le \|X(r)\|_{\ell^2}$, while the last step is due to the energy equality \eqref{viscous-energy-balance}.

Again by the second estimate in Lemma \ref{lem-heat-integral-property}, we can derive
  $$\begin{aligned}
  I_4= \int_n^{n+1} \bigg\|\nu \int_n^t e^{\mu(t-r)S} (S_{\theta}-S) X(r)  \, \d r \bigg\|_{\ell^2}^2 \,\d t
  \lesssim  \frac{1}{\mu^2} \int_n^{n+1} \nu^2 \| (S_{\theta}-S) X(r) \|_{H^{-2}}^2 \,\d r,
  \end{aligned}$$
where
  $$\begin{aligned}
  \| (S_{\theta}-S) X(r) \|_{H^{-2}}^2
  &= \sum\limits_{i=1}^{\infty} \frac{1}{\lambda^{4i}} \bigg(\sum\limits_{j=1}^{i-1}\theta_j^2 \lambda^{2(i-j)} \bigg)^2 X_i^2(r)\\
  & \le \|\theta\|_{\ell^{\infty}}^4 \sum\limits_{i=1}^{\infty} X_i^2(r) \bigg(\sum\limits_{j=1}^{i-1} \lambda^{-2j} \bigg)^2\\
  & \le \frac{\lambda^{-4}}{(1-\lambda^{-2})^2} \|\theta\|_{\ell^{\infty}}^4 \|X(r)\|_{\ell^2}^2.
  \end{aligned}$$
By equality \eqref{viscous-energy-balance} we have $\|X(r)\|_{\ell^2} \le \|X(n)\|_{\ell^2} $. Thus
  $$
  I_4 \lesssim  \frac{\nu^2}{\mu^2} \frac{\lambda^{-4}}{(1-\lambda^{-2})^2} \|\theta\|_{\ell^{\infty}}^4 \|X(n)\|_{\ell^2}^2.
  $$

Finally, for $I_3$, by It\^o isometry and Lemma \ref{similar-heat-semigroup-property} (i) with $\rho \in (0,1)$, we can obtain
  $$\begin{aligned}
  \E I_3
  &= \int_n^{n+1} \E \bigg\|\sqrt{2\nu} \int_n^t \sum_i  \lambda^i\sum_{j=1}^\infty \theta_j e^{\mu(t-r)S} A_{i,i+j} X(r) \, \d W_{i,j}(r)\bigg\|_{\ell^2}^2 \,\d t\\
  &= 2\nu \int_n^{n+1} \E\int_n^t \sum_i  \lambda^{2i}\sum_{j=1}^\infty \theta_j^2 \big\|e^{\mu(t-r)S} A_{i,i+j} X(r) \big\|_{\ell^2}^2 \,\d r \,\d t\\
  & \le 2\nu \int_n^{n+1} \E\int_n^t C_{\rho}^2 [\mu(t-r)]^{-\rho} \sum_i  \lambda^{2i}\sum_{j=1}^\infty \theta_j^2 \left\| A_{i,i+j} X(r) \right\|_{H^{-\rho}}^2 \,\d r \,\d t,
  \end{aligned}$$
where
  $$\begin{aligned}
  \sum_i  \lambda^{2i}\sum_{j=1}^\infty \theta_j^2 \left\| A_{i,i+j} X(r) \right\|_{H^{-\rho}}^2 &= \sum_i  \lambda^{2i}\sum_{j=1}^\infty \theta_j^2 \big(\lambda^{-2\rho i}X_{i+j}^2(r) + \lambda^{-2\rho (i+j)}X_i^2(r) \big)\\
  &\le \bigg(\frac{\lambda^{-2-2\rho}}{1-\lambda^{-2\rho}}+ \frac{\lambda^{-2\rho}}{1-\lambda^{-2\rho}} \bigg) \|\theta\|_{\ell^{\infty}}^2 \|X(r)\|_{H^1}^2 \\
  &\le \frac{2\lambda^{-2\rho}}{1-\lambda^{-2\rho}} \|\theta\|_{\ell^{\infty}}^2 \|X(r)\|_{H^1}^2.
  \end{aligned}$$
Therefore, we have
  $$\begin{aligned}
  \E I_3
  &\le 4\nu C_{\rho}^2 \frac{\lambda^{-2\rho}}{1-\lambda^{-2\rho}} \|\theta\|_{\ell^{\infty}}^2 \mu^{-\rho} \int_n^{n+1} \E\|X(r)\|_{H^1}^2 \int_r^{n+1} (t-r)^{-\rho} \,\d t \,\d r \\
  & \le 2\nu C_{\rho}^2 \frac{\lambda^{-2\rho}}{1-\lambda^{-2\rho}} \|\theta\|_{\ell^{\infty}}^2 \frac{1}{\kappa \mu^{\rho}} \frac{1}{1-\rho} \E\|X(n)\|_{\ell^2}^2.
  \end{aligned}$$

Combining the above estimates, we can derive that
  $$\begin{aligned}
  \E \|X(n+1)\|_{\ell^2}^2
  &\lesssim \bigg( \frac{1}{\mu \lambda^2}+ \frac{\lambda^2}{ \mu^2} \|X(0)\|_{\ell^2}^2 + \frac{\nu^2}{ \mu^2} C(\lambda) \|\theta\|_{\ell^{\infty}}^4 + \|\theta\|_{\ell^{\infty}}^2 \frac{\nu C_{\rho}^2 C(\lambda) }{\kappa \mu^{\rho} (1-\rho)} \bigg)\E \|X(n)\|_{\ell^2}^2
  \end{aligned}$$
and the proof is complete.
\end{proof}

Now we are ready to provide

\begin{proof}[Proof of Theorem \ref{thm-enhance-dissipation}]
We follow the ideas in the proof of \cite[Theorem 1.9]{FGL21c}. By Lemma \ref{lem-energy-decreasing}, there exists $ \delta\in(0,1)$ such that for any $ n \ge 1$,
  $$
  \E \|X(n)\|_{\ell^2}^2 \le \delta\, \E \|X(n-1)\|_{\ell^2}^2 \le \cdots \le \delta^n \|X(0)\|_{\ell^2}^2.
  $$
Since $t \mapsto \|X(t)\|_{\ell^2}^2$ is $\P$-a.s. decreasing, we have
  $$
  \E \bigg(\sup_{t\in[n,n+1]}\|X(t)\|_{\ell^2}^2 \bigg)=\E \|X(n)\|_{\ell^2}^2 \le \delta^n \|X(0)\|_{\ell^2}^2 = e^{-2\chi^{\prime}n} \|X(0)\|_{\ell^2}^2,
  $$
where $\chi^{\prime}= -\frac12 \log\delta >0$. Lemma \ref{lem-energy-decreasing} implies that for any $ p\ge 1, \chi >0$, we can choose a suitable pair $(\nu, \theta)$ such that $\chi^{\prime} > \chi(1+\frac{p}{2})$.

Define the event
  $$
  \hat G_n := \bigg\{ \omega \in \Omega: \sup\limits_{t\in[n,n+1]}\|X(t,\omega)\|_{\ell^2} > e^{-\chi n} \|X(0)\|_{\ell^2} \bigg\}.
  $$
Then by Chebyshev's inequality,
  $$
  \sum\limits_{n} \P(\hat G_n) \le \sum_n \|X(0)\|_{\ell^2}^{-2} e^{2 \chi n}\, \E \bigg(\sup_{t\in[n,n+1]}\|X(t,\omega)\|_{\ell^2}^2 \bigg) \le \sum_n e^{2 (\chi-\chi^{\prime}) n} < \infty.
  $$
Hence by Borel-Cantelli Lemma, for $\P$-a.s. $\omega \in \Omega$, there exists a big $N(\omega) \ge 1$ such that for any $n> N(\omega)$,
  $$
  \sup\limits_{t\in[n,n+1]}\|X(t,\omega)\|_{\ell^2} \le e^{-\chi n} \|X(0)\|_{\ell^2}.
  $$
For the case $0\le n \le N(\omega)$, by \eqref{viscous-energy-balance}, we have
  $$
  \sup\limits_{t\in[n,n+1]}\|X(t,\omega)\|_{\ell^2} \le \|X(n,\omega)\|_{\ell^2} = e^{\chi n} e^{-\chi n} \|X(n,\omega)\|_{\ell^2} \le e^{\chi N(\omega)} e^{-\chi n} \|X(0)\|_{\ell^2}.
  $$
Then, setting $C(\omega)=e^{\chi(1+N(\omega))}$, we can easily get that, $\P$-a.s. for any $ t\ge0$, $\|X(t,\omega)\|_{\ell^2}\le C(\omega) e^{-\chi t} \|X(0)\|_{\ell^2}$.

We now prove that $C(\omega)$ has finite $p$-th moment. Since we can also define $N(\omega)$ as
  $$
  N(\omega)=\sup\bigg\{ n\in \mathbb{Z}_{+}: \sup\limits_{t\in[n,n+1]}\|X(t,\omega)\|_{\ell^2} > e^{-\chi n} \|X(0)\|_{\ell^2} \bigg\},
  $$
we have that
  $$
  \{ \omega \in \Omega: N(\omega)\ge k \}=\bigcup_{n=k}^{\infty} \hat G_n.
  $$
Therefore we obtain
  $$
  \P(\{ N(\omega)\ge k \}) \le \sum\limits_{n=k}^{\infty} \P(\hat G_n) \le \sum\limits_{n=k}^{\infty} e^{2 (\chi-\chi^{\prime}) n} = \frac{e^{2 (\chi-\chi^{\prime}) k}}{1-e^{2 (\chi-\chi^{\prime})} }.
  $$
Then we can derive that
  $$
  \E e^{\chi p N(\omega)} = \sum\limits_{k=0}^{\infty} e^{\chi p k} \P(\{ N(\omega)= k \}) \le \frac{1}{1-e^{2 (\chi-\chi^{\prime})} } \sum\limits_{k=0}^{\infty} e^{\chi pk}e^{2 (\chi-\chi^{\prime}) k} < \infty,
  $$
where the last inequality is due to $\chi^{\prime} > \chi(1+\frac{p}{2})$. Hence $C(\omega)$ has finite $p$-th moment.
\end{proof}

\bigskip

\noindent\textbf{Acknowledgement.} The first named author would like to thank the financial supports of the National Key R\&D Program of China (No. 2020YFA0712700), the National Natural Science Foundation of China (Nos. 11931004, 12090014), and the Youth Innovation Promotion Association, CAS (Y2021002).

\end{document}